\theoremstyle{plain}
\newtheorem{theorem}{Theorem}[section]
\newtheorem{lemma}[theorem]{Lemma}
\newtheorem{corollary}[theorem]{Corollary}
\newtheorem{proposition}[theorem]{Proposition}
\newtheorem{definition}[theorem]{Definition}
\newtheorem{fact}[theorem]{Fact}
\newtheorem{example}[theorem]{Example}
\newtheorem{problem}[theorem]{Problem}
\newtheorem{remark}[theorem]{Remark}
\theoremstyle{definition}
\newtheorem*{claim}{Claim}
\newcommand{\Ad}{\mathop{\mathrm{Ad}}\nolimits}
\newcommand{\Hom}{\mathop{\mathrm{Hom}}\nolimits}
\newcommand{\rank}{\mathop{\mathrm{rank}}\nolimits}
\newcommand{\diag}{\mathop{\mathrm{diag}}\nolimits}
\newcommand{\even}{\mathrm{even}}
\newcommand{\Zariski}{\mathrm{Zar}}
\newcommand{\Q}{\mathbb{Q}}
\newcommand{\R}{\mathbb{R}}
\newcommand{\C}{\mathbb{C}}
\newcommand{\N}{\mathbb{N}}
\newcommand{\Z}{\mathbb{Z}}
\newcommand{\reductive}{real reductive group}
\newcommand{\trans}{{}^t\!}
\begin{document}

\title[Zariski dense discontinuous surface groups]{Zariski dense discontinuous surface groups for reductive symmetric spaces}
\author{Kazuki Kannaka, Takayuki Okuda, Koichi Tojo}

\subjclass[2020]{Primary 57S30, Secondary 22E40, 22F30, 30F35, 30F60}
\keywords{surface group; discontinuous group; Zariski-dense subgroup; rigidity; reductive group; symmetric space}

\address[K.~Kannaka]{%
Faculty of Mathematics and Physics, Institute of Science and Engineering, Kanazawa University, Kakumamachi, Kanazawa, Ishikawa, 920-1192, JAPAN;
RIKEN Interdisciplinary Theoretical and Mathematical Sciences (\lowercase{i}THEMS), 
Wako, Saitama 351-0198, Japan.
        }
\email{kannaka@se.kanazawa-u.ac.jp}

\address[T.~Okuda]{%
	Graduate School of Advanced Science and Engineering, Hiroshima University, 
    1-3-1 Kagamiyama, Higashi-Hiroshima City, Hiroshima, 739-8526, Japan.
        }
\email{okudatak@hiroshima-u.ac.jp}

\address[K.~Tojo]{%
	RIKEN Center for Advanced Intelligence Project, Nihonbashi 1-chome Mitsui Building, 15th Floor, 1-4-1 Nihonbashi, Chuo-ku, Tokyo 103-0027, Japan
        }
\email{koichi.tojo@riken.jp}

\begin{abstract}
Let $G/H$ be a homogeneous space of reductive type with non-compact $H$.
The study of deformations of discontinuous groups for $G/H$ was initiated by T.~Kobayashi. 
In this paper, we show that a standard discontinuous group $\Gamma$ admits 
a non-standard small deformation as a discontinuous group for $G/H$
if $\Gamma$ is isomorphic to a surface group of high genus
and its Zariski closure is locally isomorphic to $SL(2,\mathbb{R})$. 
Furthermore, we also prove that 
if $G/H$ is a symmetric space and admits some non virtually abelian discontinuous groups, 
then $G$ contains a Zariski-dense discrete surface subgroup of high genus
acting properly discontinuously on $G/H$.
As a key part of our proofs, 
we show that
for a discrete surface subgroup $\Gamma$ of high genus contained in a reductive group $G$, 
if the Zariski closure of $\Gamma$ is locally isomorphic to $SL(2,\mathbb{R})$,
then $\Gamma$ admits a small deformation in $G$ whose Zariski closure is a reductive subgroup 
of the same real rank as $G$.
\end{abstract}

\date{}
\maketitle
\dedicatory{Dedicated to Professor Toshiyuki Kobayashi with admiration for the plentiful and exciting mathematics he pioneered.}

\setcounter{tocdepth}{1}
\tableofcontents

\section{Introduction and main results}
\label{section:introduction}

Let $G/H$ be a homogeneous space of reductive type with a proper $SL(2,\R)$-action.
We consider a discrete subgroup $\Gamma$ of $SL(2,\R)$ isomorphic to the fundamental group $\pi_1(\Sigma)$ of a closed oriented surface $\Sigma$  of high genus.
Then the $\Gamma$-action on $G/H$ is properly discontinuous. 
In this paper, 
we show that such a $\Gamma$ always admits a non-standard small deformation in $G$ as a discontinuous group for $G/H$.
Furthermore, we also prove that 
if $G/H$ is a symmetric space, 
then $G$ contains a Zariski-dense discrete subgroup isomorphic to $\pi_1(\Sigma)$
which acts properly discontinuously on $G/H$.

We motivate our work in the following problem on Clifford--Klein forms:
\begin{problem}
\label{problem:intro-main}
Let $G/H$ be a homogeneous space of reductive type.
What types of discrete subgroups of $G$ 
can appear as discontinuous groups for $G/H$?
\end{problem}

We recall some basic notions. Let $\Gamma$ be a discrete subgroup of $G$.
If the $\Gamma$-action on $G/H$ is properly discontinuous and free, 
the quotient space $\Gamma\backslash G/H$ carries a unique $C^{\infty}$-manifold structure such that  
the quotient map $G/H\rightarrow \Gamma\backslash G/H$ is a covering of
$C^{\infty}$ class. Thus $\Gamma\backslash G/H$ becomes a $(G,G/H)$-manifold in the sense of Ehresmann and Thurston.
Following Kobayashi \cite{Kobayashi-unlimit}, we call $\Gamma$ a \textit{discontinuous group for} $G/H$ 
and $\Gamma\backslash G/H$  a \textit{Clifford--Klein form} of $G/H$.
We should note that the assumption of proper discontinuity imposes a strong constraint when $H$ is non-compact.
For instance, Kobayashi proved in \cite{Kobayashi89} that no infinite discrete subgroups of $G$ act properly discontinuously on $G/H$
if and only if $\rank_{\R}(G) = \rank_{\R}(H)$ (the Calabi--Markus phenomenon). 
The systematic study of discontinuous groups for general $G/H$ with non-compact $H$
was initiated by Kobayashi's pioneering works \cite{Kobayashi89,Kobayashi1992necessary} in 1980's, and has been developed by many researchers in various approaches.

Let us also recall from \cite[Definition 1.4]{KasselKobayashi16} that 
a discontinuous group $\Gamma$ for $G/H$ is called \emph{standard} if $\Gamma$ is contained in a reductive subgroup of $G$ acting properly on $G/H$.
It should be noted that any discontinuous group for $G/H$ with compact $H$ is standard since the $G$-action on $G/H$ is proper. 
Even for non-compact $H$, 
based on Kobayashi's properness criterion \cite{Kobayashi89}, 
standard discontinuous groups isomorphic to uniform lattices of reductive Lie groups have been much investigated
(for example \cite{Kobayashi89,Kobayashi96, KobayashiYoshino05,Kulkarni1981proper,Okuda13,Tojo2019Classification}). 

In this paper we focus on the following problem:
\begin{problem}\label{problem:Gamma_nonstandard_Zariskidense}
Let $G/H$ be a homogeneous space of reductive type with non-compact $H$ and $\Gamma_{0}$ an abstract discrete group. 
\begin{enumerate}
        \item\label{item:problem:Gamma_nonstandard} 
        Does there exist a 
        non-standard discontinuous group $\Gamma$ for $G/H$ 
        which is isomorphic to $\Gamma_{0}$ as a group?
        \item \label{item:problem:Gamma_Zariskidense}
        If it exists, can we choose such a $\Gamma$ to be Zariski-dense in $G$?
\end{enumerate}
\end{problem}

As mentioned in more detail in Sections~\ref{intro:non-standard} and~\ref{section:intro-zariski-dense}, 
Problem~\ref{problem:Gamma_nonstandard_Zariskidense} has been investigated in the following two types of settings:
\begin{itemize}
\item (\cite{Ghys95,Goldmannonstandard,Kassel12,Kobayashi98}, see Section~\ref{intro:non-standard})
$G/H$ is specific, $\Gamma_{0}$ depends on $G/H$ and $\Gamma\backslash G/H$ is compact.
\item (\cite{Benoist96}, see Section~\ref{section:intro-zariski-dense})
$G/H$ is a general homogeneous space of reductive type and $\Gamma_{0}$ is independent of $G/H$, but $\Gamma\backslash G/H$ is no longer compact.
\end{itemize}
In this paper, we study Problem~\ref{problem:Gamma_nonstandard_Zariskidense} in the second type of the setting where 
$G/H$ is general and $\Gamma_{0}$ is the surface group $\pi_{1}(\Sigma)$.

The study of properly discontinuous actions of surface groups on homogeneous spaces has long been an established topic. Recent developments in this field include Danciger--Gu\'{e}ritaud--Kassel's construction of surface groups acting properly on affine spaces 
\cite{DancigerGueritaudKassel20}.

\subsection{Zariski-dense small deformation of Fuchsian surface groups}

To study Problem~\ref{problem:Gamma_nonstandard_Zariskidense} for surface groups, we temporarily set aside the issue of proper discontinuity to consider the following problem:

\begin{problem}
    \label{prob:Zariski-dense}
    Let $\rho$ be a Lie group homomorphism from $SL(2,\R)$ to
    a real reductive Lie group $G$
    and $\Gamma_{g}$ a discrete subgroup of $SL(2,\R)$ isomorphic to $\pi_{1}(\Sigma_{g})$ with genus $g\geq 2$.
    Determine when the surface group representation $\rho|_{\Gamma_{g}}\colon \Gamma_{g}\rightarrow G$
    admits a Zariski-dense small deformation. 
\end{problem}

As seen in works of Toledo~\cite{Toledo89}, Bradlow--Garc\'{i}a-Prada--Gothen~\cite{BradlowPradaGothen2003}, Burger--Iozzi--Wienhard~\cite{BurgerIozziWienhard10} and among others (see also references in the introduction of \cite{BurgerIozziWienhard10}), 
Problem~\ref{prob:Zariski-dense} is nontrivial.
In the context of studying the deformation of Fuchsian surface groups, a recent construction of quasi-Fuchsian surface subgroups in cocompact discrete subgroups of semisimple Lie groups by Kahn--Labourie--Mozes~\cite{kahn2024surface} is also noteworthy.

Here let us recall from \cite{BradlowPradaGothen2003,BurgerIozziWienhard10} an example of a Fuchsian surface group representation which can not 
be deformed into a Zariski-dense representation. 
Let $\rho\colon SL(2,\mathbb{R})\rightarrow SU(p,q)$ $(p\geq q)$
be the following Lie group homomorphism:
\[
  SL(2,\mathbb{R}) \xrightarrow{\simeq} SU(1,1) \xrightarrow{\text{diagonal}} SU(1,1)^{q} 
\xrightarrow{\text{standard}} SU(q,q) \xrightarrow{\text{standard}} SU(p,q).  
\]
In this case, the following rigidity theorem holds:
\begin{fact}[\cite{BradlowPradaGothen2003,BurgerIozziWienhard10}]
    \label{fact:BIW}
    In the setting above, 
    any small deformation $\varphi\in \Hom(\Gamma_{g},SU(p,q))$ of the representation $\rho|_{\Gamma_{g}}$
    can be conjugated into $S(U(q,q)\times U(p-q))$. 
    In particular,  $\rho|_{\Gamma_{g}}$ can not be 
    deformed into a Zariski-dense representation 
    if $p>q$.
\end{fact}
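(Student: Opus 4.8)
The plan is to deduce Fact~\ref{fact:BIW} from the rigidity theory of maximal surface group representations into Hermitian Lie groups. Recall that for a connected Hermitian Lie group $G'$ with bounded symmetric domain $\mathcal{D}$ of rank $r$, every $\psi\in\Hom(\Gamma_g,G')$ carries a \emph{Toledo invariant} $T(\psi)\in\R$, obtained by integrating over $\Sigma_g$ the pullback of the suitably normalized K\"ahler form of $\mathcal{D}$ along any $\psi$-equivariant smooth map $\widetilde{\Sigma}_g\to\mathcal{D}$ (cf.\ \cite{Toledo89,BurgerIozziWienhard10}). This number is independent of the chosen equivariant map, is locally constant on $\Hom(\Gamma_g,G')$, and satisfies the Milnor--Wood inequality $|T(\psi)|\le r\,|\chi(\Sigma_g)|$; a representation attaining equality is called \emph{maximal}. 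In particular the set of maximal representations is open and closed in $\Hom(\Gamma_g,G')$, so every small deformation of a maximal representation is again maximal.

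The first step is to check that $\rho|_{\Gamma_g}$ is maximal, noting that the domain attached to $SU(p,q)$ has rank $q$ and that $|\chi(\Sigma_g)|=2g-2$. Since $\Gamma_g\subset SL(2,\R)$ is a Fuchsian surface group, the composite $\Gamma_g\hookrightarrow SL(2,\R)\xrightarrow{\simeq}SU(1,1)$ has maximal Toledo invariant $2g-2$. The Toledo invariant is additive under the diagonal embedding $SU(1,1)\hookrightarrow SU(1,1)^{q}$, so the corresponding representation $\Gamma_g\to SU(1,1)^{q}$ has Toledo invariant $q(2g-2)$; moreover the block embedding $SU(1,1)^{q}\hookrightarrow SU(q,q)$ (a maximal polydisc) and the standard inclusion $SU(q,q)\hookrightarrow SU(p,q)$ are tight holomorphic totally geodesic embeddings, which preserve the Toledo invariant. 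Hence $T(\rho|_{\Gamma_g})=q(2g-2)=q\,|\chi(\Sigma_g)|$ is the Milnor--Wood bound for $SU(p,q)$, so $\rho|_{\Gamma_g}$, and therefore every small deformation $\varphi\in\Hom(\Gamma_g,SU(p,q))$ of it, is maximal.

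It then remains to apply the structure theorem for maximal representations into $SU(p,q)$, established by Bradlow--Garc\'{i}a-Prada--Gothen \cite{BradlowPradaGothen2003} via $SU(p,q)$-Higgs bundles and by Burger--Iozzi--Wienhard \cite{BurgerIozziWienhard10} via bounded K\"ahler cohomology: a maximal $\varphi\colon\Gamma_g\to SU(p,q)$ with $p>q$ preserves a complex subspace $W\subset\C^{p+q}$ on which the defining Hermitian form has signature $(q,q)$, with orthogonal complement $W^{\perp}$ positive definite of dimension $p-q$; equivalently, $\varphi(\Gamma_g)$ preserves the decomposition $\C^{p+q}=W\oplus W^{\perp}$ and acts on $W^{\perp}$ unitarily, so $\varphi$ can be conjugated into $S(U(q,q)\times U(p-q))$. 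Since for $p>q$ this is a proper algebraic subgroup of $SU(p,q)$ (it stabilizes $W^{\perp}$), the Zariski closure of any such $\varphi$ is proper, and no small deformation of $\rho|_{\Gamma_g}$ is Zariski-dense when $p>q$, which is the final assertion.

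The main obstacle is precisely this last structure theorem: deriving the geometric reduction to $S(U(q,q)\times U(p-q))$ from maximality of the Toledo invariant is the genuinely deep input. In a self-contained treatment one would have to develop either the Hitchin--Kobayashi correspondence for $SU(p,q)$-Higgs bundles together with the analysis of the maximal components (where maximality forces the off-diagonal part of the Higgs field to degenerate, producing a flat positive-definite sub-bundle of rank $p-q$), or the Burger--Iozzi--Wienhard machinery of tight homomorphisms and measurable equivariant maps to the Shilov boundary (whose essential image spans the tube-type subspace $W$). By contrast, the properties of the Toledo invariant, its additivity and behaviour under tight holomorphic embeddings, and the concluding Zariski-density argument are comparatively routine once that theorem is in hand.
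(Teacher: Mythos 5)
The paper states Fact~\ref{fact:BIW} purely as a quoted result of \cite{BradlowPradaGothen2003,BurgerIozziWienhard10} and offers no proof of its own, and your outline is a faithful reconstruction of exactly the argument those references supply: maximality of the Toledo invariant of $\rho|_{\Gamma_{g}}$ (computed via the tight holomorphic chain $SU(1,1)\hookrightarrow SU(1,1)^{q}\hookrightarrow SU(q,q)\hookrightarrow SU(p,q)$), local constancy of the Toledo invariant so that small deformations remain maximal, and the structure theorem reducing maximal representations into $SU(p,q)$ with $p>q$ to $S(U(q,q)\times U(p-q))$, whence no Zariski-dense small deformation exists. Since the genuinely deep input (the rigidity/structure theorem for maximal representations) is invoked from the very sources the paper cites --- as you correctly acknowledge --- your proposal is correct and sits at the same level of self-containedness as the paper's own treatment.
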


More generally, Burger--Iozzi--Wienhard~\cite{BurgerIozziWienhard10}
proved that any maximal representation of a surface group 
to a reductive group $G$ of Hermitian type never admits a Zariski-dense small deformation 
if $G$ is of non-tube type.

Burger--Iozzi--Wienhard~\cite{BurgerIozziWienhard10}
also showed that 
if $G$ is Hermitian of tube type 
and $\rho \colon SL(2,\R) \rightarrow G$ corresponds to a diagonal disk, 
then the surface group representation $\rho|_{\Gamma_{g}}$ to $G$ 
can be deformed into a Zariski-dense representation.
Furthermore, 
for classical simple $G$,
if $G$ is Hermitian of tube type or non-Hermitian, 
Kim--Pansu showed in \cite{KIMPAN15} that,  
any surface group representation to $G$ with sufficiently large genus 
can be deformed into a Zariski-dense representation.
However, when including exceptional $G$ of non-Hermitian type, it seems that no simple criterion for Problem~\ref{prob:Zariski-dense} is known, to the best knowledge of the authors.

In this paper, we give a combinatorial sufficient condition (Corollary~\ref{maincorollary:even_ZariskiDense}) for Problem~\ref{prob:Zariski-dense} in terms of nilpotent orbits in complex reductive Lie algebras.
Our criterion is applicable in the case where $G$ is a general real reductive algebraic group.
It should also be emphasized that our proof provides an explicit construction of Zariski-dense small deformations of $\rho|_{\Gamma_{g}}$.

To state our criterion, we introduce a few notation (see Section~\ref{subsection:terminology} for the notation on real reductive Lie groups).
\begin{definition}
\label{intro_def:rho-G-g}
    For each Lie group homomorphism
    $\rho\colon SL(2,\mathbb{R})\rightarrow G$,
    we put 
    \[
    \sigma\equiv \sigma(\rho):=\mathrm{exp}(\pi\sqrt{-1}d\rho(\begin{pmatrix}1 & 0 \\
    0& -1\end{pmatrix}))\in \mathbf{G}(\C).
    \]
    Let us define an algebraic subgroup $\mathbf{G}_{\even}^{\rho}$ of $\mathbf{G}$
    as the Zariski identity component of the centralizer $\{x\in \mathbf{G}\mid \sigma x=x\sigma\}$.
    Since $\mathbf{G}_{\even}^{\rho}$ is defined over $\R$ as we see in Lemma~\ref{lemma:rho-G-g}, 
    we define an open subgroup $G_{\even}^{\rho}$ of $\mathbf{G}_{\even}^{\rho}(\R)$ as
    \[
    G_{\even}^{\rho}:=G\cap \mathbf{G}_{\even}^{\rho}(\R).
    \]
\end{definition}
We note that 
$G^{\rho}_{\even}$ has the same real rank as $G$ (see Lemma~\ref{lemma:grhoev_fulrank}).
For instance in the setting of Fact~\ref{fact:BIW},
$G^{\rho}_{\even}$ coincides with $S(U(q,q)\times U(p-q))$.

Here is our main result for Problem~\ref{prob:Zariski-dense}.
In our proof, we explicitly construct the following small deformation 
by using an idea of bending construction (see Lemma~\ref{lemma:deform-of-surface-group}):
\begin{theorem}[see also Theorem~\ref{theorem:deformation-zariski-closure}]
\label{maintheorem:deformation-zariski-closure}
In the setting of Problem~\ref{prob:Zariski-dense},
if the genus $g$ is sufficiently large, then one can construct 
a small deformation $\rho'$ of $\rho|_{\Gamma_{g}}$ such that 
the Zariski closure of $\rho'(\Gamma_{g})$ coincides with $G^{\rho}_{\even}$.
\end{theorem}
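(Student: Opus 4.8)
\medskip

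\noindent The plan is to reduce the problem to the subgroup $G_{\even}^{\rho}$ and then realise the required deformation by a bending construction along a handle basis of $\Gamma_{g}$.

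\smallskip
\noindent\textbf{Step 1: reduction to $\mathbf{G}_{\even}^{\rho}$.}
First I would observe that $\rho(SL(2,\R))$ already lies in $G_{\even}^{\rho}$. Indeed $\rho$ extends to a morphism of algebraic groups $\rho_{\C}\colon SL(2,\C)\to\mathbf{G}(\C)$, and $\sigma=\rho_{\C}(-\mathbf{1})$ is central in the Zariski-connected group $\rho_{\C}(SL(2,\C))$; hence this group — and a fortiori $\rho(SL(2,\R))$, which also lies in $G$ — is contained in $\mathbf{G}_{\even}^{\rho}$. Consequently any deformation of $\rho|_{\Gamma_{g}}$ built by bending with elements of $G_{\even}^{\rho}$ automatically takes values in $\mathbf{G}_{\even}^{\rho}$, so it suffices to produce such a deformation whose Zariski closure is the whole of $\mathbf{G}_{\even}^{\rho}$ (the reverse inclusion being automatic). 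From now on write $\mathfrak{g}_{\even}^{\rho}:=\Lie(\mathbf{G}_{\even}^{\rho})$, let $\mathfrak{s}:=d\rho(\mathfrak{sl}(2,\R))$ and $h:=d\rho(\diag(1,-1))$.

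\smallskip
\noindent\textbf{Step 2: a Lie-algebra generation lemma.}
The conceptual core is the claim that $\mathfrak{g}_{\even}^{\rho}$ is generated, as a Lie algebra, by $\mathfrak{s}$ together with a controlled number of elements lying in centralisers (taken inside $\mathfrak{g}_{\even}^{\rho}$) of hyperbolic elements of $\rho(SL(2,\R))$. Decompose $\mathfrak{g}_{\even}^{\rho}=\bigoplus_{j}V_{j}$ into irreducible $\ad\mathfrak{s}$-submodules. Since $\mathbf{G}_{\even}^{\rho}$ is the Zariski identity component of the centraliser of $\sigma=\exp(\pi\sqrt{-1}\,h)$, the element $h$ acts on $\mathfrak{g}_{\even}^{\rho}$ with even integer eigenvalues only; hence every $V_{j}$ is an odd-dimensional $\mathfrak{sl}(2)$-module and meets the zero-weight space $\mathfrak{g}_{\even}^{\rho}[0]=\mathfrak{z}_{\mathfrak{g}_{\even}^{\rho}}(h)$ in a line. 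Thus the $\ad\mathfrak{s}$-submodule generated by $\mathfrak{z}_{\mathfrak{g}_{\even}^{\rho}}(h)$ is all of $\mathfrak{g}_{\even}^{\rho}$. Grouping the $V_{j}$ into $\ad\mathfrak{s}$-isotypic blocks $\R^{m_{W}}\otimes W$, one checks that any $\Ad(g)$-conjugate ($g\in\rho(SL(2,\R))$) of $\mathfrak{z}_{\mathfrak{g}_{\even}^{\rho}}(h)$ meets such a block in a subspace of the form $\R^{m_{W}}\otimes\ell$ for a line $\ell\subseteq W$, and that $U(\mathfrak{s})\cdot(v\otimes u)=\R v\otimes W$ whenever $0\neq u\in\ell$. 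Summing over $N_{0}$ generic directions $X_{i}\in\Ad(g_{i})\,\mathfrak{z}_{\mathfrak{g}_{\even}^{\rho}}(h)$ then fills every block, once $N_{0}$ is at least the largest $\ad\mathfrak{s}$-isotypic multiplicity in $\mathfrak{g}_{\even}^{\rho}$; so for \emph{any} such $g_{1},\dots,g_{N_{0}}$ and \emph{generic} $X_{i}$ one obtains $\langle\mathfrak{s},X_{1},\dots,X_{N_{0}}\rangle_{\mathrm{Lie}}=\mathfrak{g}_{\even}^{\rho}$. A possible central summand of $\mathfrak{g}_{\even}^{\rho}$ is absorbed by adjoining, as extra directions, a basis of $\mathfrak{z}(\mathfrak{g}_{\even}^{\rho})$, which lies in every centraliser.

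\smallskip
\noindent\textbf{Step 3: bending along a handle basis.}
Take $g>N_{0}+\dim\mathfrak{z}(\mathfrak{g}_{\even}^{\rho})+1$ and fix a handle basis $a_{1},b_{1},\dots,a_{g},b_{g}$ of $\Gamma_{g}\cong\pi_{1}(\Sigma_{g})$. Each $\rho(a_{i})$ is hyperbolic, conjugate to $\exp(s_{i}h)$ by some $g_{i}\in\rho(SL(2,\R))$, so $\mathfrak{z}_{\mathfrak{g}_{\even}^{\rho}}(\rho(a_{i}))=\Ad(g_{i})\,\mathfrak{z}_{\mathfrak{g}_{\even}^{\rho}}(h)$. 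For the first $N_{0}$ indices, and for a few auxiliary central directions, choose generic small $X_{i}$ in this centraliser and set $w_{i}:=\exp(t_{i}X_{i})$ with $t_{i}$ generic and small; then apply the bending construction of Lemma~\ref{lemma:deform-of-surface-group}, i.e. put $\rho'(a_{j}):=\rho(a_{j})$ for all $j$, $\rho'(b_{j}):=\rho(b_{j})w_{j}$ for the bent indices, and $\rho'(b_{j}):=\rho(b_{j})$ otherwise. Since $w_{j}$ commutes with $\rho(a_{j})$ we have $[\rho'(a_{j}),\rho'(b_{j})]=[\rho(a_{j}),\rho(b_{j})]$ for each $j$, so the surface relation $\prod_{j}[a_{j},b_{j}]=1$ is preserved and $\rho'$ is a well-defined homomorphism; it is a small deformation of $\rho|_{\Gamma_{g}}$ with values in $G_{\even}^{\rho}\subseteq G$.

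\smallskip
\noindent\textbf{Step 4: identifying the Zariski closure, and the main obstacle.}
Let $\mathbf{L}$ be the Zariski closure of $\rho'(\Gamma_{g})$. From $\rho'(\Gamma_{g})\subseteq G_{\even}^{\rho}$ we get $\mathbf{L}\subseteq\mathbf{G}_{\even}^{\rho}$. Conversely, since $g$ exceeds $N_{0}+\dim\mathfrak{z}(\mathfrak{g}_{\even}^{\rho})$ there is an untouched handle $\langle a_{g},b_{g}\rangle$, on which $\rho'=\rho$; this is a non-elementary (rank-two free) Fuchsian subgroup, hence Zariski-dense in $\rho(SL(2,\R))$, so $\rho(SL(2,\R))\subseteq\mathbf{L}$ and $\mathfrak{s}\subseteq\Lie(\mathbf{L})$. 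Then each $w_{j}=\rho(b_{j})^{-1}\rho'(b_{j})\in\mathbf{L}$, and because the $t_{j}$ are generic the Zariski closure of $\langle w_{j}\rangle$ has Lie algebra containing $X_{j}$; hence $\Lie(\mathbf{L})\supseteq\langle\mathfrak{s},X_{1},\dots\rangle_{\mathrm{Lie}}=\mathfrak{g}_{\even}^{\rho}$ by Step 2, so $\mathbf{L}=\mathbf{G}_{\even}^{\rho}$, which is the assertion. The hard part is Step 2 together with its interface to Step 4: pinning down the correct ``sufficiently large $g$'' through the isotypic-multiplicity estimate, verifying that the genericity required of the $X_{i}$ (which are confined to the given centralisers) and of the $t_{i}$ is compatible with ``small'', and checking the ancillary facts that a single generic $\exp(tX)$ already carries $X$ in the Lie algebra of its Zariski closure and that bending along disjoint handle curves leaves a full copy of $\rho(SL(2,\R))$ visible.
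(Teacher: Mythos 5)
Your proposal is correct in outline, but it takes a genuinely different route from the paper at the decisive technical point. You bend with \emph{generic} zero-weight vectors $X_{i}$ spread across all $\ad\mathfrak{s}$-isotypic blocks, so that a number of handles equal to the largest isotypic multiplicity suffices, and you then conclude $X_{j}\in\Lie(\mathbf{L})$ by appealing to the fact that for generic $t$ the Zariski closure of the cyclic group $\langle\exp(tX_{j})\rangle$ has Lie algebra containing $X_{j}$. That fact is true, but it is exactly the heavy lifting: for a generic element of the centralizer of a hyperbolic element (typically of mixed elliptic--hyperbolic type) one needs the real Jordan decomposition inside the Zariski closure of a cyclic group together with a countable-exceptional-set argument for the elliptic (toral) part, and your proposal only flags this as ``to check''. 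The paper is structured precisely to avoid proving that statement in such generality: it uses one handle per irreducible summand $V_{i,j}$ (genus bound $g\geq\sum_{i}[\mathfrak{g}^{\rho}_{\even}:V_{2i+1}]$), takes the zero-weight vector \emph{inside each summand}, and recovers the non-central directions in $\mathfrak{r}_{t}$ not from the cyclic groups $\langle e^{tX_{i,j}}\rangle$ themselves but from the elements $Z_{i,j}(t)=\tfrac{1}{t}(\Ad(e^{tX_{i,j}})Y_{i,j}-Y_{i,j})$, whose limit $[X_{i,j},Y_{i,j}]$ together with explicit norm inequalities forces the multiplicity count (Lemma~\ref{lemma:deformation-zariski-closure}); for the trivial isotypic part it chooses a basis with Property~$(*)$ of Definition~\ref{definition:propertystar}, so that only elementary density statements (circles with $\exp X=1$, Chevalley's lemma for hyperbolic elements, polynomiality for nilpotent ones) are needed in Lemma~\ref{lemma:Zariski-dense-basis-of-AG}. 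What your approach buys, once the ancillary fact is actually proved, is a smaller genus bound (maximal multiplicity rather than total number of summands) and no need for the quantitative estimates; what the paper's approach buys is that every auxiliary lemma is elementary, and its construction also yields the stronger ``fcc-closure'' statement of Theorem~\ref{theorem:deformation-zariski-closure}~(ii) used later. Two minor remarks: your extra untouched handle and the ``$+1$'' are unnecessary, since the bending never changes the $\rho(a_{j})$ and already $\rho(a_{1}),\rho(a_{2})$ generate a Zariski-dense subgroup of $\rho(SL(2,\R))$; and the trivial isotypic block is the full centralizer $\mathfrak{z}_{\mathfrak{g}^{\rho}_{\even}}(\rho(\mathfrak{sl}(2,\R)))$, not just the center $\mathfrak{z}(\mathfrak{g}^{\rho}_{\even})$, so your ``extra central directions'' are redundant provided your $N_{0}$ counts the trivial type as well.
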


We recall that 
by the Jacobson--Morozov theorem, 
the set of Lie group homomorphisms $\rho \colon SL(2,\R) \rightarrow G$ up to conjugations by the identity component of $G$,  
has a bijective correspondence to the set of real nilpotent (adjoint) orbits in the Lie algebra $\mathfrak{g}$ of $G$ (see \cite[Chapter 9]{CollingwoodMcGovern} for the details).
In this paper, a real nilpotent orbit $\mathcal{O}$ in $\mathfrak{g}$ is called \emph{even} if its complexification is a even complex nilpotent orbit in $\mathfrak{g} \otimes \C$ in the sense of \cite[Chapter 3.8]{CollingwoodMcGovern}.
Then as a corollary to Theorem \ref{maintheorem:deformation-zariski-closure}, we have the following:

\begin{corollary}\label{maincorollary:even_ZariskiDense}
    In the setting of Problem~\ref{prob:Zariski-dense},
    assume that $\rho \colon SL(2,\R) \rightarrow G$ is corresponding to an even real nilpotent orbit in the sense above and that the genus $g$ is sufficiently large.
    Then the surface group representation $\rho|_{\Gamma_{g}}$ can be deformed to 
    a Zariski-dense representation.
\end{corollary}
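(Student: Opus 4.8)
The plan is to deduce Corollary~\ref{maincorollary:even_ZariskiDense} from Theorem~\ref{maintheorem:deformation-zariski-closure} by checking that, when $\rho$ corresponds to an even real nilpotent orbit, the subgroup $G^{\rho}_{\even}$ appearing there is all of $G$. Indeed, Theorem~\ref{maintheorem:deformation-zariski-closure} provides, for $g$ sufficiently large, a small deformation $\rho'$ of $\rho|_{\Gamma_{g}}$ whose Zariski closure is $\mathbf{G}^{\rho}_{\even}$; so once we know $\mathbf{G}^{\rho}_{\even} = \mathbf{G}$, together with the standard fact that $G$ is Zariski-dense in $\mathbf{G}$, such a $\rho'$ is precisely a Zariski-dense representation.

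To compute $\mathbf{G}^{\rho}_{\even}$ in the even case, I would unwind the element $\sigma = \sigma(\rho)$ of Definition~\ref{intro_def:rho-G-g}. Set $h := d\rho\!\begin{pmatrix}1 & 0 \\ 0 & -1\end{pmatrix}$, so that $\sigma = \exp(\pi\sqrt{-1}\,h)$; by the Jacobson--Morozov correspondence, $h$ is the neutral semisimple element of the $\mathfrak{sl}_{2}$-triple attached to the real nilpotent orbit $\mathcal{O}$ associated with $\rho$. By the definition adopted in this paper, $\mathcal{O}$ is even exactly when its complexification is even, i.e. when every eigenvalue of $\ad h$ on $\mathfrak{g}\otimes\C$ is an even integer. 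Hence $\Ad(\sigma) = \exp(\pi\sqrt{-1}\,\ad h)$ acts as the identity on $\mathfrak{g}\otimes\C$, so $\sigma$ centralizes the identity component of $\mathbf{G}$. Consequently the centralizer $\{x\in\mathbf{G}\mid \sigma x = x\sigma\}$ contains $\mathbf{G}^{\circ}$, its Zariski identity component equals $\mathbf{G}^{\circ} = \mathbf{G}$, and therefore $\mathbf{G}^{\rho}_{\even} = \mathbf{G}$ and $G^{\rho}_{\even} = G\cap\mathbf{G}(\R) = G$.

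Putting the pieces together: for $g$ large, Theorem~\ref{maintheorem:deformation-zariski-closure} yields a small deformation $\rho'$ of $\rho|_{\Gamma_{g}}$ with $\rho'(\Gamma_{g})$ Zariski-dense in $G^{\rho}_{\even} = G$, hence in $\mathbf{G}$; this is exactly the assertion that $\rho|_{\Gamma_{g}}$ can be deformed to a Zariski-dense representation.

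Since the substantive content is carried entirely by Theorem~\ref{maintheorem:deformation-zariski-closure}, I do not expect any genuine obstacle. The only point needing a little care is the connectedness bookkeeping in Definition~\ref{intro_def:rho-G-g}: one should confirm, using Lemma~\ref{lemma:rho-G-g} and the conventions fixed in Section~\ref{subsection:terminology}, that the ``Zariski identity component of the $\sigma$-centralizer'' is indeed all of $\mathbf{G}$ (rather than a proper finite-index algebraic subgroup) and that Zariski-density of $\rho'(\Gamma_{g})$ in $G$ promotes to Zariski-density in $\mathbf{G}$. Both hold immediately under the standing hypothesis that $G$ is a real reductive algebraic group in the sense used there.
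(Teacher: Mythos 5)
Your proposal is correct and matches the paper's route: the corollary is deduced from Theorem~\ref{maintheorem:deformation-zariski-closure} by observing that evenness of the nilpotent orbit (all $\ad\rho(A_0)$-eigenvalues even, so $\Ad(\sigma)=\mathrm{id}$) forces $G^{\rho}_{\even}=G$, exactly as the paper records via Definition~\ref{def:even-hom} and the remarks in Section~\ref{subsection:homSL2G}. Your explicit verification that $\sigma$ centralizes the connected group $\mathbf{G}$ is just a spelled-out version of the same observation, so there is nothing genuinely different to compare.
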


It should be remarked that classifications of 
Lie group homomorphisms $\rho \colon SL(2,\R) \rightarrow G$ corresponding to even nilpotent orbits 
can be read off in \cite[Chapter 9]{CollingwoodMcGovern}. 
Therefore, we can obtain abundant examples of $\rho$ to which Corollary \ref{maincorollary:even_ZariskiDense} is applicable.
For example, 
in \cite[Chapter 9.6]{CollingwoodMcGovern}, 
in the table for $G = E_{6(2)}$, which is neither split nor
of Hermitian type,  
$18$ types of such $\rho$ can be found as labeled $0$ (which is corresponding to the trivial homomorphism), $6$, $7$, $8$, $11$, $20$, $21$, $22$, $23$, $24$, $25$, $26$, $32$, $33$, $34$, $35$, $36$ and $37$.

We note that the converse of 
Corollary~\ref{maincorollary:even_ZariskiDense} is not true.
Indeed, for example, in the case $G=SL(n,\R)$ $(n\geq 3)$, 
there exist a non-even nilpotent orbit in $\mathfrak{g}$. 
However, by the result of Kim--Pansu (\cite[Theorem 1]{KIMPAN15}) above mentioned, for the corresponding non-even Lie group homomorphism $\rho\colon SL(2,\R)\rightarrow SL(n,\R)$, the representation $\rho|_{\Gamma{g}}$ can always be deformed into a Zariski-dense representation if the genus $g$ is sufficiently large.
Thus Theorem~\ref{maintheorem:deformation-zariski-closure} 
does not provide the optimal 
result for Problem~\ref{prob:Zariski-dense}. 

In the setting of Problem~\ref{prob:Zariski-dense}, 
by applying \cite[Th\'{e}or\`{e}me 2]{Guichard04}, we see that any small deformation of $\rho|_{\Gamma_{g}}$
is faithful and discrete if $\rho$ is non-trivial.
Hence we also remark that by combining Corollary~\ref{maincorollary:even_ZariskiDense} with the fact that any real non-compact semisimple Lie algebra admits some even nilpotent orbits, 
we obtain the following:
\begin{corollary}\label{corollary:surface_Zariskidens}
    A \reductive \ with a non-compact simple factor 
    contains a surface subgroup which is discrete and Zariski-dense.
\end{corollary}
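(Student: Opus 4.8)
The plan is to reduce the statement directly to Corollary~\ref{maincorollary:even_ZariskiDense} together with the classical existence of even nilpotent orbits. Let $G$ be a real reductive group with a non-compact simple factor; write $G = G' \times G''$ (up to a finite cover and isogeny considerations, which I will need to handle carefully) with $G'$ having simple non-compact Lie algebra $\mathfrak{g}'$. The first step is to produce a suitable $\rho$. By the Jacobson--Morozov theorem, homomorphisms $SL(2,\R) \to G'$ up to conjugation correspond to real nilpotent orbits in $\mathfrak{g}'$, and by \cite[Chapter~3.8, 9]{CollingwoodMcGovern} every non-compact real simple Lie algebra carries at least one \emph{even} real nilpotent orbit; for instance, the principal nilpotent orbit of a maximally split Cartan subalgebra is always even, and one checks that any real form of a complex simple Lie algebra possessing a non-compact part admits an even real nilpotent orbit (the regular/principal one relative to a non-compact root, or—more robustly—one invokes the fact that the complex principal nilpotent is even and descends to a real even orbit in any split-adjacent real form; I should cite the precise place in \cite{CollingwoodMcGovern} here). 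Pick such an even real nilpotent orbit $\mathcal{O}$ in $\mathfrak{g}'$ and let $\rho_0 \colon SL(2,\R) \to G'$ be an associated homomorphism; then $\rho_0$ is nontrivial. Compose with the inclusion $G' \hookrightarrow G$ to get a nontrivial $\rho \colon SL(2,\R) \to G$ corresponding to an even real nilpotent orbit of $\mathfrak{g}$ (evenness is inherited, since the complexified orbit is even in $\mathfrak{g}'\otimes\C \subseteq \mathfrak{g}\otimes\C$ and the grading is unchanged).

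The second step is to fix a genus $g \geq 2$ large enough for Corollary~\ref{maincorollary:even_ZariskiDense} to apply to this $\rho$, and a discrete subgroup $\Gamma_g \subset SL(2,\R)$ isomorphic to $\pi_1(\Sigma_g)$ (a cocompact Fuchsian group, which exists for every $g\geq 2$). Corollary~\ref{maincorollary:even_ZariskiDense} then yields a small deformation $\rho' \in \Hom(\Gamma_g, G)$ of $\rho|_{\Gamma_g}$ whose image is Zariski-dense in $G$. It remains to check that $\rho'(\Gamma_g)$ is a \emph{discrete} subgroup of $G$ and that $\rho'$ is injective (so that $\rho'(\Gamma_g)$ is genuinely a surface subgroup). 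This is precisely the point where the excerpt already records the needed input: by \cite[Th\'{e}or\`{e}me~2]{Guichard04}, since $\rho$ is nontrivial, every small deformation of $\rho|_{\Gamma_g}$ is faithful and discrete; hence $\rho'$ is faithful with discrete image. Therefore $\Gamma := \rho'(\Gamma_g)$ is a discrete, Zariski-dense subgroup of $G$ isomorphic to $\pi_1(\Sigma_g)$, which is what we want.

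I expect the only genuine subtlety to be the \emph{uniform} statement "every real reductive group with a non-compact simple factor admits an even real nilpotent orbit in its Lie algebra," which I would phrase as: for any real semisimple Lie algebra $\mathfrak{s}$ that is non-compact and simple, $\mathfrak{s}$ contains an even real nilpotent element. The cleanest route is to note that $\mathfrak{s}$ has a non-compact (maximally split) Cartan subalgebra with a nonzero restricted root system; the principal nilpotent element built from a basis of simple restricted roots spans a $\mathfrak{sl}_2$-triple whose semisimple element $h$ has $\mathrm{ad}(h)$-eigenvalues given by the (restricted) root coheights, which are integers—hence the orbit is even—and whenever $\mathfrak{s}$ is non-compact the restricted root system is nonempty so this nilpotent is nonzero. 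Once this lemma is in place, the rest of the argument is the short composition above. A secondary bookkeeping point is passing between $G$ and its simple factor through possibly non-connected or non-linear ambient groups: since Zariski-density is about the Zariski closure inside $\mathbf{G}$, and $\rho$ already lands in (a lift of) the non-compact factor, I would simply note that Corollary~\ref{maincorollary:even_ZariskiDense} is stated for a general real reductive algebraic group $G$ and applies verbatim, so no extra care beyond choosing $\rho$ nontrivial is required.
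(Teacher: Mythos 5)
Your route is the paper's route: pick a nontrivial even real nilpotent orbit in the non-compact simple factor, apply Corollary~\ref{maincorollary:even_ZariskiDense} for large genus to get a Zariski-dense small deformation, and invoke \cite[Th\'{e}or\`{e}me 2]{Guichard04} (as the paper does) to keep the deformation discrete and faithful; the reduction of evenness from the factor to $\mathfrak{g}$ and the non-triviality of $\rho$ are handled correctly.

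The one weak point is exactly the step you flag as the "only genuine subtlety": your proposed justification of the existence of a nonzero even real nilpotent orbit does not work as written. First, "the eigenvalues of $\mathrm{ad}(h)$ are integers, hence the orbit is even" is a non sequitur: integrality of the $\mathrm{ad}(h)$-spectrum holds for every $\mathfrak{sl}_2$-triple, whereas evenness requires all eigenvalues to be even. Second, the "principal nilpotent built from simple restricted roots" need not exist in a general non-compact real form: regular (principal) nilpotent elements of $\mathfrak{g}\otimes\C$ meet the real form only in special cases (e.g.\ $\mathfrak{su}(p,q)$ has none when $|p-q|\geq 2$), and the element $h\in\mathfrak{a}$ with $\alpha(h)=2$ on all simple restricted roots need not be the neutral element of any real $\mathfrak{sl}_2$-triple. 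The fact you need is genuinely nontrivial and is precisely what the paper imports: Theorem~\ref{theorem:bspansevensl2} (from \cite{Okuda17}) produces even homomorphisms $\rho_l\colon\mathfrak{sl}(2,\R)\to\mathfrak{g}$ with $\{\rho_l(A_0)\}$ a basis of $\mathfrak{b}$; since a non-compact simple factor forces $\mathfrak{a}\cap\mathfrak{g}_{\mathrm{ss}}\neq 0$ and hence $\mathfrak{b}\neq 0$ (take $A+\iota(A)$ for $0\neq A\in\mathfrak{a}_+$), at least one $\rho_l$ is nontrivial, and one proceeds as you do. With that substitution (or an equally careful citation of the relevant tables in \cite[Chapter~9]{CollingwoodMcGovern}), your argument coincides with the paper's.
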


\subsection{Non-standard deformation of a discontinuous group for a non-Riemannian homogeneous space}
\label{intro:non-standard}

In this subsection, we focus on 
the subproblem below of Problem \ref{problem:Gamma_nonstandard_Zariskidense} (\ref{item:problem:Gamma_nonstandard}):

\begin{problem}
\label{problem:deform-standard}
For a given standard discontinuous group $\Gamma$ for $G/H$ with non-compact $H$,
determine whether or not there exists a non-standard discontinuous group for $G/H$
sufficiently close to $\Gamma$.
\end{problem}

In contrast to the case where $H$ is compact, 
it is non-trivial whether a discrete subgroup of $G$ sufficiently close to $\Gamma$ act  
properly discontinuously on $G/H$ (\emph{stability for proper discontinuity}). 
Kobayashi \cite{Kobayashi93,Kobayashi98} and Kobayashi--Nasrin \cite{KobayashiNasrin06} 
initiated the study of the deformation of discontinuous groups for $G/H$ with non-compact $H$, 
and investigated local rigidity of discontinuous groups and stability for proper discontinuity.
Based on his properness criterion (Fact~\ref{fact:propernesscriterion}),
Kobayashi quantified ``the degree of proper discontinuity" of the $\Gamma$-action
and proved in \cite{Kobayashi98} stability for proper discontinuity in a certain broad setting.
In this paper, we apply a certain extension (Fact~\ref{fact:stability-for-properness}) of Kobayashi's theorem by Kassel~\cite{Kassel12}.

Related to Problem~\ref{problem:deform-standard}, 
non-trivial deformations of cocompact discontinuous groups $\Gamma$ for some specific irreducible semisimple symmetric spaces $G/H$ have been investigated. 
In the case where $H$ is compact, it is known as the Selberg--Weil theorem that
if the dimension of $G/H$ is greater than two, then any cocompact discontinuous group for $G/H$ is locally rigid.  
In contrast, in the case where $H$ is non-compact, some symmetric spaces $G/H$ of arbitrarily higher dimension
admit cocompact discontinuous groups which are not locally rigid. 
For example, let $G/H$ be the group manifold $(G' \times G')/\diag G'$ where $G'$ is locally isomorphic to $SO(n,1)$ or $SU(n,1)$.
We also fix a cocompact discrete subgroup $\Gamma'$ of $G'$ with $H^1(\Gamma',\R) \neq 0$.
Then Kobayashi \cite{Kobayashi98} proved that 
there exist non-standard small deformations of 
the cocompact discontinuous group $\Gamma=\Gamma' \times \{1\}$ for $G/H=(G' \times G')/\diag G'$. 
Note that Kobayashi's results above  give generalizations of the similar results  by Goldman \cite{Goldmannonstandard} for $G'=SL(2,\R)$ and those by Ghys \cite{Ghys95} for $G'=SL(2,\C)$.
As another example of $G/H$, 
let us consider the symmetric space $G/H  = SO(2,2n)/U(1,n)$ on which $SO(1,2n) \subset SO(2,2n)$ acts properly and cocompactly.
Kassel \cite{Kassel12} proved by applying Johnson--Millson's bending construction that 
for a specific arithmetic cocompact subgroup $\Gamma$ of $SO(1,2n)$, 
there exist Zariski-dense and non-standard small deformations of the cocompact discontinuous group $\Gamma$ for $G/H$.

It would be still worthwhile to consider Problem~\ref{problem:deform-standard} even if the Clifford--Klein form $\Gamma\backslash G/H$ is non-compact.
In this paper, we restrict ourselves to the case where $\Gamma$ is isomorphic to a surface group of sufficiently large genus, 
in which case $\Gamma\backslash G/H$ is far from being compact.
Instead we give an affirmative answer to Problem~\ref{problem:deform-standard}
for a general homogeneous space $G/H$ of reductive type as follows:

\begin{theorem}[see also Theorem \ref{theorem:strong-non-standard}]\label{theorem:any_sl2_nonstandard}
Let $G/H$ be a homogeneous space of reductive type with non-compact $H$,
and $\rho\colon SL(2,\R) \rightarrow G$ a Lie group homomorphism
such that $SL(2,\R)$ acts properly on $G/H$ via $\rho$.
Fix any discrete surface subgroup $\Gamma_{g}$ 
of $SL(2,\R)$ with sufficiently large genus $g$.
Then for any neighborhood $W$ of 
$\rho|_{\Gamma_{g}}$ in 
$\mathrm{Hom}(\Gamma_{g},G)$,
there exists a discrete and faithful representation $\rho'$ in $W$
such that $\rho'(\Gamma_{g})$ is a non-standard discontinuous group for $G/H$.
\end{theorem}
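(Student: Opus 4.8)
The plan is to produce the desired $\rho'$ as a deformation supplied by Theorem~\ref{maintheorem:deformation-zariski-closure}, chosen close enough to $\rho|_{\Gamma_g}$, and then to check three things: $\rho'$ is discrete and faithful, $\rho'(\Gamma_g)$ acts properly discontinuously and freely on $G/H$, and $\rho'(\Gamma_g)$ is \emph{not} standard. The first two are inherited from results already quoted; the substance is the non-standardness, which will follow from the Zariski closure of $\rho'(\Gamma_g)$ being the full-real-rank group $\mathbf{G}_{\even}^{\rho}$.

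First I would dispose of the easy reductions. Since $SL(2,\R)$ acts properly on $G/H$ via $\rho$ and $H$ is non-compact, $\ker\rho$ must be compact, hence finite, so $\rho$ is non-trivial; as noted in the excerpt (via \cite[Th\'{e}or\`{e}me 2]{Guichard04}), there is then a neighborhood $W_1$ of $\rho|_{\Gamma_g}$ in $\Hom(\Gamma_g,G)$ consisting of discrete and faithful representations. The $\rho|_{\Gamma_g}$-action on $G/H$ is properly discontinuous because $\Gamma_g$ is a discrete subgroup of a group acting properly, so by the stability of proper discontinuity, Fact~\ref{fact:stability-for-properness}, there is a neighborhood $W_2$ of $\rho|_{\Gamma_g}$ all of whose members act properly discontinuously on $G/H$. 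Applying Theorem~\ref{maintheorem:deformation-zariski-closure} (legitimate since $g$ is large) inside $W\cap W_1\cap W_2$ yields a representation $\rho'$ there whose image has Zariski closure $\mathbf{G}_{\even}^{\rho}$. Then $\rho'$ is discrete and faithful, $\rho'(\Gamma_g)\cong\Gamma_g$ is torsion-free and acts properly discontinuously on the manifold $G/H$, hence freely; so $\rho'(\Gamma_g)$ is a discontinuous group for $G/H$, and it remains to rule out standardness.

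The main step, and the one I expect to be the obstacle, is non-standardness. Suppose toward a contradiction that $\rho'(\Gamma_g)\subseteq L$ for some reductive subgroup $L$ of $G$ acting properly on $G/H$. By Kobayashi's properness criterion (Fact~\ref{fact:propernesscriterion}) we must have $\rank_{\R}L<\rank_{\R}G$: if $\rank_{\R}L=\rank_{\R}G$, then after conjugation a maximal split abelian subspace $\mathfrak{a}_L$ of $L$ would equal a maximal split abelian subspace $\mathfrak{a}$ of $G$, and $\mathfrak{a}_L\cap\mathfrak{a}_H=\mathfrak{a}_H\neq\{0\}$ (as $H$ is non-compact reductive) would contradict properness. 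Hence, using the Cartan decomposition of $L$, the Cartan projections $\mu(\rho'(\Gamma_g))\subseteq\mu(L)$ all lie in a finite union of linear subspaces of $\mathfrak{a}$, each of dimension $\rank_{\R}L<\dim\mathfrak{a}$, so the closed cone they generate has empty interior. On the other hand $\rho'(\Gamma_g)$ is Zariski-dense in the connected reductive group $\mathbf{G}_{\even}^{\rho}$, whose real rank equals $\rank_{\R}G$ and which contains a maximal split torus of $G$ (Lemma~\ref{lemma:grhoev_fulrank}); then Benoist's limit cone theorem for Zariski-dense subgroups forces the closed cone generated by $\mu(\rho'(\Gamma_g))$ to have non-empty interior in $\mathfrak{a}$ — a contradiction. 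Therefore $\rho'(\Gamma_g)$ is not standard, and the theorem follows.

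Two points deserve care in the last step. First, Benoist's theorem must be applied to $\rho'(\Gamma_g)$ as a Zariski-dense subgroup of $\mathbf{G}_{\even}^{\rho}$ and its conclusion transported to the full flat $\mathfrak{a}$ of $G$; this is legitimate precisely because $\mathbf{G}_{\even}^{\rho}$ and $G$ share a maximal split torus (one chooses $d\rho$ Cartan-compatible so that $\sigma(\rho)$ centralizes $\mathfrak{a}$), which is also the content of Lemma~\ref{lemma:grhoev_fulrank}. Second, one should resist passing to the Zariski closure of $L$, whose real rank can be larger than $\rank_{\R}L$; the clean route is to keep the trivial inclusion $\mu(\rho'(\Gamma_g))\subseteq\mu(L)$ and use only the explicit cone shape of $\mu(L)$. (When $L$ happens to be the real locus of an algebraic subgroup, one may instead argue directly that $L\supseteq\mathbf{G}_{\even}^{\rho}(\R)^{0}$ and so contains a maximal split torus of $G$, again forcing $\rank_{\R}L=\rank_{\R}G$; the cone argument is the version not requiring this.)
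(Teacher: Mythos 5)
Your reductions (discreteness/faithfulness, stability of proper discontinuity via Fact~\ref{fact:stability-for-properness}, freeness from torsion-freeness) are fine, and your idea of ruling out standardness by comparing $\mu(\rho'(\Gamma_g))\subseteq\mu(L)\subseteq W\mathfrak{a}_L\cap\mathfrak{a}_+$ (a finite union of proper subspaces when $\rank_\R L<\rank_\R G$) with an asymptotic-cone statement for $\rho'(\Gamma_g)$ is a genuinely different route from the paper's. But the key step has a gap: Benoist's nonempty-interior theorem for the limit cone applies to Zariski-dense subgroups of \emph{semisimple} groups, whereas $\mathbf{G}^{\rho}_{\even}$ is only reductive and may have a positive-dimensional split central torus (e.g.\ $G=SL(3,\R)$ with $\rho$ of type $[2,1]$ gives $G^{\rho}_{\even}=S(GL(2,\R)\times GL(1,\R))$; products give central split tori of dimension $\geq 2$). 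In that situation Zariski-density does \emph{not} force the cone generated by $\mu(\rho'(\Gamma_g))$ to have nonempty interior in $\mathfrak{a}$: the components of $\mu$ along $\mathfrak{z}(\mathfrak{g}^{\rho}_{\even})\cap\mathfrak{a}$ are governed (up to bounded error) by a homomorphism $\Gamma_g\to\R^{d}$, whose image can lie on a line even for Zariski-dense subgroups (compare the subgroup generated by $(2,3)$ in $\mathbb{G}_m^2$, whose Lyapunov data lie on the line $\R(\log 2,\log 3)$ although the subgroup is Zariski-dense). Consequently, from the single property $\overline{\rho'(\Gamma_g)}^{\Zariski}=\mathbf{G}^{\rho}_{\even}$ one cannot exclude that $\rho'(\Gamma_g)$ is contained in a $\theta$-stable closed subgroup $L$ with $\mathfrak{l}\supseteq[\mathfrak{g}^{\rho}_{\even},\mathfrak{g}^{\rho}_{\even}]$ but missing central split directions, so that $\rank_\R L<\rank_\R G$; your contradiction then evaporates, and Chevalley (your parenthetical remark) only rescues the case where $L$ is algebraic or where $\mathfrak{g}^{\rho}_{\even}$ has no split central part.

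This is exactly the difficulty the paper's proof is built to handle: Theorem~\ref{theorem:deformation-zariski-closure}~(ii) (used through Theorem~\ref{theorem:any_sl2_geven}~(\ref{property:reductive-closure})) shows that the \emph{specific} deformation $\rho'$ --- constructed with a basis of the centralizer $\mathfrak{z}_{\mathfrak{g}}(\rho(\mathfrak{sl}(2,\R)))$ satisfying Property~$(*)$ and an irrational bending parameter, and using Lemma~\ref{lemma:Zariski-dense-basis-of-AG} together with Chevalley's theorem (Fact~\ref{fact:chevalley}) --- has the stronger property that \emph{every} closed Lie subgroup of $G$ with finitely many connected components containing $\rho'(\Gamma_g)$ already contains $(G^{\rho}_{\even})_0$, in particular a full maximal split abelian subspace; non-standardness then follows from the Calabi--Markus phenomenon (Proposition~\ref{proposition:sl_2_Grho_fulrank}). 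So to close your argument you must either invoke this finer conclusion (i.e.\ cite Theorem~\ref{theorem:deformation-zariski-closure}~(ii) rather than only the Zariski-closure statement of Theorem~\ref{maintheorem:deformation-zariski-closure}) or supply an additional argument controlling the central split directions of $\mu(\rho'(\Gamma_g))$; Zariski density alone is not enough.
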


Let us outline the proof of Theorem~\ref{theorem:any_sl2_nonstandard}.
It follows from Kassel's stability theorem (Fact~\ref{fact:stability-for-properness}) for proper discontinuity  that 
the small deformation $\rho'\colon \Gamma_{g}\rightarrow G$ obtained by Theorem~\ref{maintheorem:deformation-zariski-closure} gives a
discontinuous group for $G/H$.
By applying Kobayashi's properness criterion (Fact~\ref{fact:propernesscriterion}),
we check that the Zariski closure $G^{\rho}_{\even}$ of $\rho'(\Gamma_{g})$
does \emph{not} act properly on $G/H$.
To prove the theorem, we shall show that  
for any \emph{not necessarily algebraic} reductive subgroup $L$ of $G$ (more generally, any \emph{closed Lie subgroup} $L$ of $G$ with finitely many connected components), $L$ does not act properly on $G/H$ if $L$ contains $\rho'(\Gamma_{g})$.
The proof of the theorem is completed by bridging the gap between the two notions of linear algebraic groups and linear Lie groups using Chevalley's theorem (Fact~\ref{fact:chevalley}).

We shall give an application of Theorem~\ref{theorem:any_sl2_nonstandard} 
to spectral analysis of intrinsic differential operators on Clifford--Klein forms $\Gamma\backslash G/H$,
which was recently initiated by a series of papers of Kassel and Kobayashi \cite{KasselKobayashi16,KasselKobayashi2019Invariant,KasselKobayashi2019standard,KasselKobayashi2020Spectral,Kobayashi16intrinsic} (see also \cite{Kobayashi22conjecture}).

Let $\Gamma$ be a discontinuous group for a reductive symmetric space $G/H$.
Then the Clifford--Klein form $\Gamma \backslash G/H$ becomes a pseudo-Riemannian locally symmetric space. 
As in Kassel--Kobayashi \cite{KasselKobayashi16}, 
the differential operators on $\Gamma \backslash G/H$ induced by $G$-invariant differential operators on $G/H$ 
are thought of as intrinsic on the $(G,G/H)$-manifold $\Gamma \backslash G/H$.
Kassel and Kobayashi have investigated the discrete spectrum $\mathrm{Spec}_d(\Gamma \backslash G/H)$,
that is, the set of joint $L^{2}$-eigenvalues of such intrinsic differential operators on $\Gamma \backslash G/H$ (see \cite[Section 1.1]{KasselKobayashi16} for a more precise definition).
It should be noted that the Laplacian on the pseudo-Riemannian manifold $\Gamma \backslash G/H$ is 
an important example of intrinsic differential operators and no longer an elliptic differential operator if $H$ is non-compact.

As a new direction of spectral analysis on locally symmetric spaces $\Gamma\backslash G/H$, 
Kassel and Kobayashi studied the behavior of $\mathrm{Spec}_d(\Gamma \backslash G/H)$
under small deformations of $\Gamma$, and
discovered in \cite{KasselKobayashi16} infinite discrete spectra on $\Gamma\backslash G/H$
which are stable under any small deformations of $\Gamma$ (\emph{stability for infinite discrete spectra}).
As an immediate corollary to \cite[Theorem 1.7]{KasselKobayashi16}
and Theorem \ref{theorem:any_sl2_nonstandard}, 
we obtain the following:

 \begin{corollary}
    \label{theorem:Zariskidense_infinitespectral}
    Let $G/H$ be a reductive symmetric space and $K$ a maximal compact subgroup of $G$
        such that $H$ is stable by the Cartan involution corresponding to $K$.
    Suppose that the following conditions hold:
    \begin{itemize}
        \item $G/H$ admits a discrete series representation (equivalently, the rank condition $\rank(G/H) = \rank(K /(K \cap H))$ holds),
        \item $G/H$ admits a proper $SL(2,\R)$-action via a Lie group homomorphism $\rho\colon SL(2,\R)\rightarrow G$.
    \end{itemize}
    Let $\Gamma_{g}$ be a discrete surface subgroup of $SL(2,\R)$ with a sufficiently large genus $g \geq 2$.
    Then there exists a neighborhood $W$ of $\rho|_{\Gamma_{g}}$ in $\Hom(\Gamma_{g},G)$
    satisfying the following:
    \begin{itemize}
    \item (Stability for proper discontinuity)
    For any $\rho'\in W$, the representation $\rho'$ is discrete and faithful, 
    and $\rho'(\Gamma_{g})$ is a discontinuous group for $G/H$.
    \item (Existence of non-standard small deformations)
    $W$ contains a representation $\rho'$ such that $\rho'(\Gamma_{g})$ is non-standard as a discontinuous group for $G/H$.
    \item (Stability for infinite discrete spectra)
    \[
    \# \bigcap_{\rho'\in W} \mathrm{Spec}_d(\rho'(\Gamma_{g}) \backslash G/H)  = \infty.
    \]
    \end{itemize}
\end{corollary}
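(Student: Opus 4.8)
The plan is to deduce all three assertions by assembling Theorem~\ref{theorem:any_sl2_nonstandard}, Kassel's stability theorem for proper discontinuity (Fact~\ref{fact:stability-for-properness}), and the stability theorem for infinite discrete spectra of Kassel--Kobayashi (\cite[Theorem~1.7]{KasselKobayashi16}); the only real content is checking that the common hypotheses hold for the standard surface group $\rho|_{\Gamma_{g}}$. First I would record the preliminary reductions. The homomorphism $\rho$ is non-trivial, for otherwise the $SL(2,\R)$-action on $G/H$ would be trivial and hence not proper, $SL(2,\R)$ being non-compact. The group $\Gamma_{g}\cong\pi_{1}(\Sigma_{g})$ with $g\geq 2$ is torsion-free, and since it is a discrete subgroup of $SL(2,\R)$ acting properly on $G/H$ via $\rho$, it acts properly discontinuously and freely, so $\rho(\Gamma_{g})\backslash G/H$ is a Clifford--Klein form. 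This action is \emph{standard}: $\rho(\Gamma_{g})$ lies in the connected semisimple (hence reductive, and closed) subgroup $\rho(SL(2,\R))$ of $G$, which acts properly on $G/H$ because the finite kernel of $\rho$ does not affect properness. Moreover this action is \emph{sharp} in the quantitative-properness sense required by Fact~\ref{fact:stability-for-properness} and \cite[Theorem~1.7]{KasselKobayashi16}; sharpness of discontinuous groups contained in a reductive subgroup acting properly is known (Kassel--Kobayashi), and can also be extracted by quantifying Kobayashi's properness criterion (Fact~\ref{fact:propernesscriterion}).

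Next I would pin down the neighborhood $W$. By Fact~\ref{fact:stability-for-properness} there is a neighborhood $W_{1}$ of $\rho|_{\Gamma_{g}}$ in $\Hom(\Gamma_{g},G)$ such that every $\rho'\in W_{1}$ is again a (sharp) discontinuous group for $G/H$; after shrinking $W_{1}$ by Guichard's rigidity theorem \cite[Th\'{e}or\`{e}me~2]{Guichard04} (applicable since $\rho$ is non-trivial), every $\rho'\in W_{1}$ is discrete and faithful, and then, $\Gamma_{g}$ being torsion-free, $\rho'(\Gamma_{g})$ acts freely and properly discontinuously on $G/H$ — this gives the first bullet. The hypotheses of \cite[Theorem~1.7]{KasselKobayashi16} are satisfied, namely $G/H$ is a reductive symmetric space with $\rank(G/H)=\rank(K/(K\cap H))$ (equivalently, $G/H$ admits a discrete series representation, which is assumed) and $\rho|_{\Gamma_{g}}$ is a sharp discontinuous group; that theorem then supplies a neighborhood $W_{2}$ of $\rho|_{\Gamma_{g}}$ with $\#\bigcap_{\rho'\in W_{2}}\mathrm{Spec}_{d}(\rho'(\Gamma_{g})\backslash G/H)=\infty$, the third bullet. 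Setting $W:=W_{1}\cap W_{2}$, both stability statements hold on $W$, and applying Theorem~\ref{theorem:any_sl2_nonstandard} to this particular neighborhood $W$ (legitimate, as $g$ is large and $W$ is an arbitrary neighborhood of $\rho|_{\Gamma_{g}}$) produces some $\rho'\in W$ with $\rho'(\Gamma_{g})$ non-standard, which is the second bullet.

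Being a combination of already-established theorems, the argument poses no genuine obstacle; the points deserving care are (i) verifying that the standard action of $\rho|_{\Gamma_{g}}$ is sharp in exactly the form demanded by Fact~\ref{fact:stability-for-properness} and by \cite[Theorem~1.7]{KasselKobayashi16}, and (ii) confirming that the spectral-stability result of Kassel--Kobayashi genuinely applies here despite the Clifford--Klein form $\rho(\Gamma_{g})\backslash G/H$ being far from compact. Both are handled within Kassel--Kobayashi's framework: sharpness is automatic for discontinuous groups sitting inside a reductive subgroup that acts properly, and their construction of a stable discrete spectrum requires only sharpness together with the rank condition for discrete series, not cocompactness of the quotient.
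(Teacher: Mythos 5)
Your proposal is correct and follows essentially the same route as the paper, which presents this corollary as an immediate consequence of combining Kassel's stability theorem (Fact~\ref{fact:stability-for-properness}), Theorem~\ref{theorem:any_sl2_nonstandard} applied to the chosen neighborhood, and \cite[Theorem~1.7]{KasselKobayashi16} under the rank condition. The only superfluous step is the appeal to Guichard's theorem: Fact~\ref{fact:stability-for-properness} already yields discreteness and faithfulness of every $\rho'$ in the neighborhood, so no extra shrinking is needed for the first bullet.
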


For example, 
Corollary \ref{theorem:Zariskidense_infinitespectral} can be applied for the symmetric space $G/H = SU(p,q)/U(p,q-1)$ for $p\geq q$
(see also Section~\ref{section:examples}).

\subsection{Zariski-dense discontinuous surface group for a reductive symmetric space}
\label{section:intro-zariski-dense}

Let $G/H$ be a homogeneous space of reductive type
and $\Gamma_{0}$ a non-abelian free group or a surface group.
In this subsection,  
we focus on the existence problem (Problem~\ref{problem:Gamma_nonstandard_Zariskidense}~\eqref{item:problem:Gamma_Zariskidense})
of realizations of $\Gamma_{0}$ as Zariski-dense discrete subgroups of $G$ acting properly discontinuously on $G/H$.
Note that 
non-abelian free groups
and surface groups are both non virtually abelian,
that is, they have no finite-index abelian subgroups.

First, we recall the following fact 
which gives a Lie algebraic level criterion for $G/H$ admitting non virtually abelian discontinuous groups.  

\begin{fact}[Benoist {\cite[Theorem 1.1]{Benoist96}}]\label{fact:free_group_Zariski_dense}
In the setting above, $G$ contains a non virtually abelian discrete subgroup $\Gamma$ 
acting properly discontinuously on the homogeneous space $G/H$ of reductive type
if and only if 
\begin{equation}
\mathfrak{b}_+ \not \subset W \mathfrak{a}_\mathfrak{h}. \label{eq:Benoist}
\end{equation}
(see Section \ref{section:KobayashiBenoist} for the notations of $\mathfrak{b}_+$, $W$ and $\mathfrak{a}_\mathfrak{h}$).
In this case, we can choose $\Gamma$ to be free and Zariski-dense in $G$.
\end{fact}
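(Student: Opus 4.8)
\medskip
\noindent\textbf{Proof idea.}
The plan is to reduce the statement to Kobayashi's properness criterion \cite{Kobayashi89} and Benoist's limit cone theorem \cite{Benoist96}. Fix a maximal split torus of $G$ with Lie algebra $\mathfrak{b}$, the dominant chamber $\mathfrak{b}_+$, the restricted Weyl group $W$, and a Cartan projection $\mu\colon G\to\mathfrak{b}_+$, so that $\mu(H)=\mathfrak{a}_\mathfrak{h}$. Kobayashi's criterion then says that a closed subgroup $L<G$ acts properly discontinuously on $G/H$ if and only if $\mu(L)$ \emph{drifts away from} $W\mathfrak{a}_\mathfrak{h}$: for every $R>0$ the set $\{\ell\in L\mid \operatorname{dist}(\mu(\ell),W\mathfrak{a}_\mathfrak{h})\le R\}$ is relatively compact. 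The statement thus becomes a question about which cones in $\mathfrak{b}_+$ arise, up to bounded error, as $\mu(\Gamma)$ for a discrete non virtually abelian $\Gamma$, and the point of \eqref{eq:Benoist} is that it is exactly the condition guaranteeing a ``thin'' admissible direction.

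For the ``if'' direction, assume \eqref{eq:Benoist}. Since $W\mathfrak{a}_\mathfrak{h}$ is a closed cone, I would first choose $X_0\in\mathfrak{b}_+\setminus W\mathfrak{a}_\mathfrak{h}$ and perturb it to a \emph{regular} $X\in\operatorname{int}\mathfrak{b}_+$ still outside $W\mathfrak{a}_\mathfrak{h}$; homogeneity of $W\mathfrak{a}_\mathfrak{h}$ then produces $\varepsilon>0$ and a small closed subcone $\mathcal{C}\ni X$ with $\operatorname{dist}(Y,W\mathfrak{a}_\mathfrak{h})\ge\varepsilon\|Y\|$ for all $Y\in\mathcal{C}$. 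Next I would run the classical ping--pong construction: take $a,b\in G$ to be high powers of generic loxodromic elements whose Jordan projections lie close to $X$, so that $\Gamma:=\langle a,b\rangle$ is free, discrete, and satisfies a uniform bound $\operatorname{dist}(\mu(\gamma),\mathcal{C})\le C$ for all $\gamma\in\Gamma$; a standard genericity argument (in the spirit of \cite{Benoist96}) lets one arrange in addition that $\Gamma$ is Zariski-dense in $G$. Then $\operatorname{dist}(\mu(\gamma),W\mathfrak{a}_\mathfrak{h})\ge\tfrac{\varepsilon}{2}\|\mu(\gamma)\|-C'\to\infty$ along $\Gamma$, so Kobayashi's criterion gives that $\Gamma$ acts properly discontinuously on $G/H$, and $\Gamma$ is free and Zariski-dense, as required.

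For the ``only if'' direction, let $\Gamma<G$ be discrete, non virtually abelian, and acting properly discontinuously on $G/H$. By the Tits alternative, either $\Gamma$ is virtually solvable or it contains a rank-$2$ free subgroup; I would treat the (main) free case as follows and dispose of the virtually solvable one by the same Cartan-projection geometry. So assume $\Gamma$ free; after passing to a finite-index subgroup its Zariski closure $\mathbf{L}$ is connected, and, reducing as in Benoist to the case $\mathbf{L}$ reductive, the limit cone theorem \cite{Benoist96} shows that $\mu(\Gamma)$ stays within bounded distance of the limit cone $\mathcal{L}_\Gamma\subseteq\mathfrak{b}_+$, a convex cone with nonempty interior. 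Applying Kobayashi's criterion to the properness of $\Gamma$ forces $\mathcal{L}_\Gamma$ to meet $W\mathfrak{a}_\mathfrak{h}$ only at the origin (in fact with linear separation); hence $\mathcal{L}_\Gamma\not\subseteq W\mathfrak{a}_\mathfrak{h}$, and since $\mathcal{L}_\Gamma\subseteq\mathfrak{b}_+$ this is exactly \eqref{eq:Benoist}.

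I expect the real obstacle to be on the ``if'' side: producing a \emph{single} subgroup that is at once free, discrete, Zariski-dense in $G$, and has Cartan projection confined (up to bounded error) to an arbitrarily thin cone about a prescribed regular direction. Making the ping--pong estimates uniform enough that the escape from $W\mathfrak{a}_\mathfrak{h}$ is genuinely \emph{linear} in $\|\mu(\gamma)\|$ is the technical heart, and it is precisely Benoist's asymptotic-cone machinery for linear groups that supplies this control. On the ``only if'' side the non-elementary ingredient is again the limit cone theorem, together with the bookkeeping needed to descend to a Zariski-dense free subgroup of a reductive subgroup of $G$ and to treat the virtually solvable case uniformly.
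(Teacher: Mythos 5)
Your overall strategy (Kobayashi's properness criterion combined with Benoist's limit-cone and ping-pong machinery) is the right one, and your ``if'' direction is essentially the paper's Fact~\ref{fact:Benoist_Cartan_cone}. However, your ``only if'' direction has a genuine gap at exactly the point where $\mathfrak{b}_+$ (rather than $\mathfrak{a}_+$) enters. The claim $\mathcal{L}_\Gamma\subseteq\mathfrak{b}_+$ is false whenever the opposition involution $\iota$ is nontrivial (e.g.\ $G=SL(3,\R)$): for a Zariski-dense subgroup of a semisimple group the limit cone has nonempty interior in $\mathfrak{a}$, so it cannot lie in the proper subspace $\mathfrak{b}$. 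What your argument actually needs is that $\mathcal{L}_\Gamma$ is convex and $\iota$-invariant (because $\lambda(\gamma^{-1})=\iota(\lambda(\gamma))$), hence meets $\mathfrak{b}_+\smallsetminus\{0\}$; only then does separation of $\mu(\Gamma)$ from $W\mathfrak{a}_\mathfrak{h}$ produce a point of $\mathfrak{b}_+$ outside $W\mathfrak{a}_\mathfrak{h}$, i.e.\ \eqref{eq:Benoist}. As written, ``$\mathcal{L}_\Gamma\not\subseteq W\mathfrak{a}_\mathfrak{h}$'' does not yield \eqref{eq:Benoist}. Moreover, the virtually solvable but non virtually abelian case cannot be ``disposed of by the same Cartan-projection geometry'': for such $\Gamma$ there is no Zariski-dense-in-reductive limit cone with nonempty interior, and this case is precisely the nontrivial content of Benoist's Th\'eor\`eme 3.3, quoted in the paper as Fact~\ref{fact:Benoist_free_necessary} ($\mu(\Gamma)\pitchfork\mathfrak{b}_+$ in $\mathfrak{a}$ already forces $\Gamma$ to be virtually abelian); you need that statement, or a proof of it, and the Tits alternative alone does not supply it.

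A second gap concerns the actual setting of the statement: here $G$ is reductive, not semisimple, and this is the part the paper proves itself in the Appendix. Ping-pong with loxodromic elements and genericity give at best a subgroup whose Zariski closure contains the semisimple part; Zariski density in $G$ also requires the image in the central torus $\mathbf{Z}(\mathbf{G})_{0}$ to be Zariski-dense, while keeping $\mu(\Gamma)$ inside the prescribed thin cone. Your appeal to ``a standard genericity argument'' does not address this; the paper handles it in Lemma~\ref{lemma:Benoist-Schottky} by multiplying the Schottky generators coming from the semisimple part by a suitable central element with small Lyapunov projection generating a Zariski-dense subgroup of $\mathbf{Z}(\mathbf{G})_{0}$. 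Finally, a minor point in your ``if'' direction: the thin cone should be taken $\iota$-stable (automatic if you choose it symmetrically about $X\in\mathfrak{b}$), since $\mu(\gamma^{-1})=\iota(\mu(\gamma))$ forces $\mu(\Gamma)$ to be essentially $\iota$-symmetric; this is why Fact~\ref{fact:Benoist_Cartan_cone} is stated for $\iota$-stable cones.
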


It should be emphasized that 
the fact above gives an answer to Problem \ref{problem:Gamma_nonstandard_Zariskidense} \eqref{item:problem:Gamma_Zariskidense} for free groups $\Gamma_0$ as the condition \eqref{eq:Benoist}.

In this paper, we focus on 
Problem \ref{problem:Gamma_nonstandard_Zariskidense} \eqref{item:problem:Gamma_Zariskidense} for the case where $\Gamma_0$ is a surface group.
One of our main results, which gives an analogue of Fact \ref{fact:free_group_Zariski_dense} for surface groups, is the following:

\begin{theorem}[see also Theorem \ref{theorem:genZariskidenseSurface_on_symm}]\label{theorem:ZariskidenseSurface_on_symm}
Suppose that $G/H$ is a symmetric space 
and that $G$ contains a non virtually abelian discrete subgroup $\Gamma$
acting properly discontinuously on $G/H$.
Then we can choose $\Gamma$ to be isomorphic to a surface group and Zariski-dense in $G$.
\end{theorem}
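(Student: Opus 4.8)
The plan is to bootstrap from Benoist's theorem (Fact~\ref{fact:free_group_Zariski_dense}) together with our deformation result (Theorem~\ref{maintheorem:deformation-zariski-closure}), using the hypothesis that $G/H$ is symmetric to find a proper $SL(2,\R)$ inside the standard reductive subgroup that Benoist produces. First I would invoke Fact~\ref{fact:free_group_Zariski_dense}: since $G$ contains a non virtually abelian discrete subgroup acting properly discontinuously on $G/H$, the criterion $\mathfrak{b}_+ \not\subset W\mathfrak{a}_\mathfrak{h}$ holds. The key point, special to the symmetric case, is that this Lie-algebraic condition is exactly what guarantees the existence of a Lie group homomorphism $\rho\colon SL(2,\R)\rightarrow G$ such that $SL(2,\R)$ acts \emph{properly} on $G/H$ via $\rho$; this is essentially a known consequence of Kobayashi's properness criterion (Fact~\ref{fact:propernesscriterion}) combined with Benoist's condition in the symmetric setting — one picks a hyperbolic element whose Cartan projection lies in $\mathfrak{b}_+$ but misses $W\mathfrak{a}_\mathfrak{h}$, embeds it in an $\mathfrak{sl}_2$-triple via Jacobson--Morozov, and checks properness of the resulting $SL(2,\R)$-action using the ``proper'' relation $\pitchfork$ for the Cartan projections. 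I would cite or reprove this step, isolating it as the place where symmetry is used; for $G/H$ symmetric one has $\mathfrak{a}_\mathfrak{h}$ a subspace of a maximal split abelian $\mathfrak{a}$ and the Weyl-group combinatorics is controllable.

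Next, fix a discrete surface subgroup $\Gamma_g\subset SL(2,\R)$ of sufficiently large genus $g$. Then $\rho|_{\Gamma_g}\colon \Gamma_g\rightarrow G$ is a discrete faithful representation whose image is a standard discontinuous group for $G/H$ (it lies in $\rho(SL(2,\R))$, which acts properly). Now apply Theorem~\ref{maintheorem:deformation-zariski-closure}: for $g$ large there is a small deformation $\rho'$ of $\rho|_{\Gamma_g}$ whose Zariski closure is exactly $G^\rho_{\even}$. By Guichard's theorem (cited in the discussion after Corollary~\ref{maincorollary:even_ZariskiDense}), $\rho'$ is still discrete and faithful when $\rho$ is non-trivial, and by Kassel's stability theorem (Fact~\ref{fact:stability-for-properness}) $\rho'(\Gamma_g)$ still acts properly discontinuously on $G/H$ provided the deformation is small enough. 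Thus $\rho'(\Gamma_g)$ is a discrete surface subgroup of $G$ acting properly discontinuously on $G/H$; what remains is to upgrade ``Zariski closure $= G^\rho_{\even}$'' to ``Zariski dense in $G$''.

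To get genuine Zariski density in all of $G$, I would not use the homomorphism $\rho$ from the previous paragraph directly unless $G^\rho_{\even}=G$; instead the right move is to choose $\rho$ more carefully — namely corresponding to a \emph{principal} (regular) nilpotent element, or at least one for which $G^\rho_{\even}=G$, whenever such a $\rho$ still yields a proper $SL(2,\R)$-action on $G/H$. This is where the argument becomes delicate: properness of the $SL(2,\R)$-action forces the Cartan projection of $\rho$ to avoid $W\mathfrak{a}_\mathfrak{h}$, which may be incompatible with $\rho$ being principal. The fix I expect to use is a two-step construction: first use Benoist to produce \emph{any} proper $\rho$, deform via Theorem~\ref{maintheorem:deformation-zariski-closure} to land densely in $G^\rho_{\even}$; then observe $G^\rho_{\even}$ is itself a reductive group of the same real rank as $G$ acting on $G/H$, and iterate Benoist's criterion \emph{relative to} $G^\rho_{\even}$ — but this does not obviously terminate at $G$. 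The cleaner route, and the one I would pursue, is to choose from the start an $\mathfrak{sl}_2$-triple whose semisimple element $d\rho(\mathrm{diag}(1,-1))$ has its $W$-orbit meeting $\mathfrak{b}_+\setminus W\mathfrak{a}_\mathfrak{h}$ \emph{and} is even with $G^\rho_{\even}=G$; the existence of such a triple in the symmetric case should follow by a direct analysis of the restricted root system of $G/H$ together with the classification of even nilpotent orbits — deforming an arbitrary Benoist element along $\mathfrak{a}$ into the interior of the appropriate Weyl chamber until it becomes the characteristic of an even orbit with full centralizer. The main obstacle is precisely verifying that this ``upgrade to $G^\rho_{\even}=G$'' is always possible for symmetric $G/H$ satisfying Benoist's condition; I would handle it by reducing to simple $G$, noting $\mathfrak{b}_+$ is defined so that $\mathfrak{b}_+\not\subset W\mathfrak{a}_\mathfrak{h}$ already rules out the ``rank-deficient'' cases, and then checking the (short) list of remaining symmetric pairs case by case, using Theorem~\ref{maintheorem:deformation-zariski-closure} to replace $\rho$ by its even-degeneration whenever the principal $\rho$ fails to be proper.
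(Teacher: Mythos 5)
Your overall strategy (Benoist's criterion $\mathfrak{b}_+ \not\subset W\mathfrak{a}_\mathfrak{h}$, then a proper \emph{even} homomorphism $\rho\colon SL(2,\R)\to G$, then deformation via Theorem~\ref{maintheorem:deformation-zariski-closure} together with Kassel's stability) is the same as the paper's, but the step you yourself flag as the ``main obstacle'' is exactly the nontrivial content, and your proposal does not actually supply it. Two concrete problems. First, your mechanism for producing a proper $SL(2,\R)$ is not correct as stated: Jacobson--Morozov embeds a \emph{nilpotent} element into an $\mathfrak{sl}_2$-triple, not a hyperbolic one, and a generic element of $\mathfrak{b}_+\smallsetminus W\mathfrak{a}_\mathfrak{h}$ is \emph{not} of the form $\rho(A_0)$ for any $\rho$ --- up to scaling, only the finite set of characteristics of nilpotent orbits arises. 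So ``pick a hyperbolic element missing $W\mathfrak{a}_\mathfrak{h}$ and embed it'' does not work, and likewise ``deform a Benoist element along $\mathfrak{a}$ until it becomes the characteristic of an even orbit'' is not an argument: whether some even characteristic escapes $W\mathfrak{a}_\mathfrak{h}$ is precisely what must be proved. Second, your fallback of iterating relative to $G^\rho_{\even}$ does not terminate (as you note), and ``checking the short list of symmetric pairs case by case'' is left entirely unexecuted.

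The paper closes this gap with two specific ingredients you are missing. One is Theorem~\ref{theorem:bspansevensl2} (Okuda's abundance theorem \cite{Okuda17}): there exist \emph{even} Lie algebra homomorphisms $\rho_1,\dots,\rho_k\colon\mathfrak{sl}(2,\R)\to\mathfrak{g}$ with $\rho_l(A_0)\in\mathfrak{a}_+$ whose elements $\rho_l(A_0)$ form a basis of $\mathfrak{b}$; hence if every $\rho_l(A_0)$ lay in $\mathfrak{a}_\mathfrak{h}$ one would get $\mathfrak{b}_+\subset\mathfrak{a}_\mathfrak{h}$, so Benoist's condition forces some even $\rho_l$ with $\rho_l(A_0)\in\mathfrak{a}_+\smallsetminus\mathfrak{a}_\mathfrak{h}$ (the converse containment $\rho(A_0)\in\mathfrak{a}_+\Rightarrow\rho(A_0)\in\mathfrak{b}_+$ is Proposition~\ref{proposition:sl2b}). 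The other is Lemma~\ref{lemma:positivity}, proved via Oshima--Sekiguchi's analysis of the restricted root system of a symmetric pair, which gives $\mathfrak{a}_+\cap W\mathfrak{a}_\mathfrak{h}=\mathfrak{a}_+\cap\mathfrak{a}_\mathfrak{h}$; this is where the symmetry of $G/H$ is genuinely used, and it converts ``$\rho_l(A_0)\notin\mathfrak{a}_\mathfrak{h}$'' into ``$\rho_l(A_0)\notin W\mathfrak{a}_\mathfrak{h}$'', i.e.\ properness of the $SL(2,\R)$-action by Corollary~\ref{cor:sl_2-proper}. With a proper even $\rho$ in hand, your final step (Theorem~\ref{theorem:any_sl2_geven}: deformation with Zariski closure $G^\rho_{\even}=G$, Kassel's stability for proper discontinuity) matches the paper. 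So the proposal is a correct outline of the reduction but has a genuine gap at its pivotal step, which is neither elementary nor avoidable by the devices you sketch.
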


Let us give an explanation of a proof of Theorem \ref{theorem:ZariskidenseSurface_on_symm} as below:
assume that $G$ contains a non virtually abelian discrete subgroup $\Gamma$
acting properly discontinuously on $G/H$
(or equivalently, $(G,H)$ satisfies the condition \eqref{eq:Benoist} above).
As discussed in \cite{Okuda13,Okuda17}, if $G/H$ is symmetric, then
there exists an \emph{even} Lie group homomorphism $\rho\colon SL(2,\R) \rightarrow G$ such that $SL(2,\R)$ acts properly on $G/H$ via $\rho$. 
In our proof of Theorem \ref{theorem:ZariskidenseSurface_on_symm}, 
by combining Corollary~\ref{maincorollary:even_ZariskiDense} with 
Kassel's stability theorem (Fact~\ref{fact:stability-for-properness}),
we obtain a discontinuous surface group for $G/H$ which is Zariski-dense in $G$.

\subsection{Organization of the paper}
In Section~\ref{section:preliminary}, 
we recall preliminary results for proper actions on homogeneous spaces of reductive type. 
In Section~\ref{section:zariski-closure-of-small-deformation}, we prove Theorem~\ref{theorem:deformation-zariski-closure}
generalizing Theorem~\ref{maintheorem:deformation-zariski-closure}. 
In Section~\ref{section:non-standard-deformation}, 
we prove Theorem~\ref{theorem:strong-non-standard} generalizing Theorem~\ref{theorem:any_sl2_nonstandard}. 
In Section~\ref{section:Zariski-dense}, 
we give a proof of Theorem~\ref{theorem:genZariskidenseSurface_on_symm}
generalizing Theorem~\ref{theorem:ZariskidenseSurface_on_symm}. 
In Section~\ref{section:examples}, 
as an example of Corollary~\ref{theorem:Zariskidense_infinitespectral} and Theorem~\ref{theorem:ZariskidenseSurface_on_symm}, 
we consider the classical symmetric space $G/H = SU(p,q)/U(p,q-1)$ for $p\geq q$ and 
give discontinuous groups for $G/H$ which are isomorphic to surface groups of high genus and Zariski-dense in $G$. 
In Appendix, 
we give some remarks on 
Fact~\ref{fact:free_group_Zariski_dense} in the case where $G$ is not semisimple but reductive. 

\section{Preliminaries}
\label{section:preliminary}

\subsection{Terminology}
\label{subsection:terminology}

We mean by a \emph{\reductive} 
a (Euclidean) open subgroup $G$ of
the real points $\mathbf{G}(\R)$
of a connected reductive algebraic group $\mathbf{G}$ defined over $\R$. 
When we state ``$G$ is a \reductive", we also implicitly consider $\mathbf{G}$.
By the Zariski topology of $G$, we mean the relative topology of the Zariski topology on the complex points $\mathbf{G}(\C)$. 
For a subset $S$ of $G$, we denote by $\overline{S}^{\Zariski}$ the closure (\emph{Zariski closure}) of $S$ with respect to the Zariski topology of $G$.
The subset $S$ is called \emph{Zariski-dense} if $\overline{S}^{\Zariski}=G$.

A subgroup $H$ of $G$ is \emph{reductive}
if $H$ is a (not necessarily algebraic) closed subgroup of finitely many connected components
with respect to the Euclidean topology which is stable 
under a Cartan involution of $G$.
Then the homogeneous space $G/H$ is called 
\emph{of reductive type}.

A subgroup $H$ of $G$ is called \emph{symmetric} if $H$ is a (Euclidean) open subgroup of the fixed subgroup of an involution of $G$. 
The pair $(G,H)$ is called a symmetric pair and $G/H$ is called a \emph{reductive symmetric space}.

\subsection{Preliminaries for the properness criterion}\label{section:KobayashiBenoist}

Throughout this subsection, let us fix $G$ as a real reductive group in the sense of Section \ref{subsection:terminology}. 
In this section, we recall some terminologies and results given by T.~Kobayashi \cite{Kobayashi96}, Y.~Benoist \cite{Benoist96} and F.~Kassel \cite{Kassel12} in a form that we shall need.

First, we set up a binary relation ``$\pitchfork$'' defined on the power set of $G$ as follows: 
\begin{definition}[Kobayashi \cite{Kobayashi96}]\label{definition:tilde_fork}
Let $L$ and $H$ be both subsets in $G$.
We write \[
    L \pitchfork H \text{ in } G \]
    if for any compact subset $C$, 
    the subset $L \cap C H C$ is relatively compact in $G$.
\end{definition}

One can easily check that 
for closed subgroups $L, H$ of $G$,
the $L$-action on $G/H$ is proper if and only if $L \pitchfork H$ in $G$.

We write $\mathfrak{g}$ for the Lie algebra of $G$,
and fix a Cartan involution $\theta$ on $G$. 
The Cartan decomposition of $\mathfrak{g}$ with respect to $\theta$ is written as $\mathfrak{g} = \mathfrak{k} + \mathfrak{p}$.
Let $\mathfrak{a}$ be a maximal abelian subspace of $\mathfrak{p}$. 
In this paper, we call such $\mathfrak{a}$ a maximal split abelian subspace of $\mathfrak{g}$.
The restricted root system of $(\mathfrak{g},\mathfrak{a})$ 
and its Weyl group acting on $\mathfrak{a}$
are denoted by $\Sigma = \Sigma(\mathfrak{g},\mathfrak{a})$ and $W = W(\mathfrak{g},\mathfrak{a})$,
 respectively.
It should be noted that the $W$-action is trivial on $\mathfrak{z}(\mathfrak{g}) \cap \mathfrak{a}$,
where $\mathfrak{z}(\mathfrak{g})$ denotes the center of $\mathfrak{g}$.

Let us also fix a positive system $\Sigma_+$ of $\Sigma$, and define 
\[
\mathfrak{a}_+ := \{ A \in \mathfrak{a} \mid \langle A, \alpha \rangle \geq 0 \text{ for any } \alpha \in \Sigma_+ \}.
\]
for the closed Weyl chamber of $\mathfrak{a}$ corresponding to $\Sigma_+$.
The Cartan projection is written as 
\[
\mu\colon G \rightarrow \mathfrak{a}_+, ~ g \mapsto \mu(g), 
\]
where
$\mu(g)$ denotes the unique element of $\mathfrak{a}_+$ such that 
\[
g \in K (\exp \mu(g)) K.
\]

For closed subgroups $L$ and $H$ of $G$,  
the following fact gives a criterion for the properness 
of the $L$-action on the homogeneous space $G/H$ 
in terms of the images of $L$ and $H$ under the Cartan projection:

\begin{fact}[Properness criterion]\label{fact:propernesscriterion}
Let $L$ and $H$ be both subsets in $G$.
Then the following two conditions are equivalent:
    \begin{enumerate}
        \item \label{item:properness:proper} $L \pitchfork H$ in $G$.
        \item \label{item:properness:mu} $\mu(L) \pitchfork \mu(H)$ in the finite-dimensional vector space $\mathfrak{a}$. 
    \end{enumerate}
\end{fact}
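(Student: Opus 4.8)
The plan is to deduce the equivalence by transporting statements between $G$ and the finite-dimensional vector space $\mathfrak{a}$ through the Cartan decomposition $G = K(\exp\mathfrak{a}_+)K$. Only two standard properties of the Cartan projection $\mu$ are needed. First, \emph{$\mu$ is continuous and proper}: if $D\subset\mathfrak{a}_+$ is compact, then $\mu^{-1}(D)$ is a closed subset of the compact set $K(\exp D)K$, hence compact. Second, the \emph{bounded perturbation property}: for every compact $Q\subset G$ there is a compact $Q'\subset\mathfrak{a}$ with $\mu(QgQ)\subset\mu(g)+Q'$ for all $g\in G$. I also record that, since $\mathfrak{a}$ is abelian and finite-dimensional, condition \eqref{item:properness:mu} is equivalent to the statement that $\mu(L)\cap(\mu(H)+Q')$ is relatively compact for every compact $Q'\subset\mathfrak{a}$.

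To prove that \eqref{item:properness:mu} implies \eqref{item:properness:proper}, I would fix a compact $C\subset G$ and let $Q'\subset\mathfrak{a}$ be the compact set supplied by the bounded perturbation property with $Q=C$. Then $\mu(CHC)\subset\mu(H)+Q'$, so for $x\in L\cap CHC$ one has $\mu(x)\in\mu(L)\cap(\mu(H)+Q')$, which is relatively compact by hypothesis; let $D_0$ be its closure, a compact subset of $\mathfrak{a}_+$. Then $L\cap CHC\subset\mu^{-1}(D_0)$, which is compact by properness of $\mu$, so $L\cap CHC$ is relatively compact. Since $C$ was arbitrary, $L\pitchfork H$ in $G$.

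For the converse, I would fix a compact $Q'\subset\mathfrak{a}$ and take $A\in\mu(L)\cap(\mu(H)+Q')$, writing $A=\mu(\ell)$ with $\ell\in L$ and $A=\mu(h)+v$ with $h\in H$, $v\in Q'$. Since $\mu(h)$ and $v$ lie in the abelian $\mathfrak{a}$, $\exp A=\exp(\mu(h))\exp(v)$, so from $\ell\in K(\exp A)K$ and $\exp(\mu(h))\in KhK$ one gets $\ell\in K(KhK)\exp(v)K\subset CHC$ with $C:=K\cup\bigl(K(\exp Q')K\bigr)$ compact and depending only on $Q'$. Hence $\mu(L)\cap(\mu(H)+Q')\subset\mu(L\cap CHC)$; the latter is relatively compact since $L\cap CHC$ is, by hypothesis \eqref{item:properness:proper}, and $\mu$ is continuous. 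This gives \eqref{item:properness:mu}.

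The substantive point, and the only real obstacle, is the bounded perturbation property proved \emph{uniformly} in $g$. Since $G$ is linear I may assume $G\subset GL(n,\R)$ with $\mu$ recording the sorted logarithms of singular values; writing a compact $Q\subset G$ as $Q\subset K(\exp B)K$ for a compact $B\subset\mathfrak{a}_+$, the quantity $\mu(QgQ)$ is controlled by $\mu\bigl(\exp(b)\,k\,\exp(\mu(g))\,k'\,\exp(b')\bigr)$ with $b,b'\in B$ and $k,k'\in K$, and submultiplicativity of singular values (equivalently, a sub-additivity estimate $\|\mu(xy)-\mu(x)\|\le\|\mu(y)\|$ in a Weyl-invariant norm) bounds this by $\mu(g)$ plus a constant depending only on $\sup_{b\in B}\|b\|$. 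Granting this lemma together with the continuity and properness of $\mu$, the two implications above are routine; all the care goes into making the perturbation bound independent of $g\in G$.
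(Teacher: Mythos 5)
The paper does not prove this statement at all: it is quoted as a known Fact, with the remark attributing it to Kobayashi \cite{Kobayashi96} and Benoist \cite{Benoist96} (generalizing \cite[Theorem 4.1]{Kobayashi89}), so there is no in-paper proof to compare against. Your argument is essentially the standard proof from those references: both implications reduce, exactly as you set them up, to continuity and properness of $\mu$ together with the uniform perturbation estimate $\mu(QgQ)\subset\mu(g)+Q'$, and your two deductions from these ingredients are correct (including the small points that $\exp A=\exp(\mu(h))\exp(v)$ because $\mathfrak a$ is abelian, and that $\exp(\mu(h))\in KhK$). You also correctly identify the perturbation lemma as the only substantive input; your singular-value sketch is the usual route, but to be complete you would need to fix an embedding $G\subset GL(n,\R)$ compatible with the Cartan data (sending $K$ into $O(n)$ and $\exp\mathfrak a$ into positive diagonal matrices, with $\mathfrak a\hookrightarrow\mathfrak a_{GL_n}$ linear and injective) so that the intrinsic $\mu$ is actually read off from sorted log singular values, and then invoke the Weyl-invariant subadditivity $\|\mu(xy)-\mu(x)\|\le\|\mu(y)\|$ (Kostant-type inequality); this is precisely how Kobayashi, Benoist and Kassel argue.
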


\begin{remark}
For the case where $L$ and $H$ are both reductive subgroups of $G$, 
the properness criterion for the $L$-action on $G/H$ was given by Kobayashi \cite[Theorem 4.1]{Kobayashi89} in terms of $KAK$-decompositions of $L$ and $H$.
As its generalization, 
Fact \ref{fact:propernesscriterion} was proved by 
Kobayashi \cite{Kobayashi96} 
and 
Benoist \cite{Benoist96}.
\end{remark}

We shall fix a reductive subgroup $H$ of $G$
and consider the homogeneous space $G/H$ of reductive type in the sense of Section \ref{subsection:terminology}. 
One can assume that $H$ is $\theta$-stable and $\mathfrak{a}_\mathfrak{h} := \mathfrak{a} \cap \mathfrak{p}_\mathfrak{h}$ is a maximal abelian subspace of $\mathfrak{p}_\mathfrak{h}$, where $\mathfrak{h} = \mathfrak{k}_\mathfrak{h} + \mathfrak{p}_\mathfrak{h}$ denotes the Lie algebra of $H$ with the Cartan decomposition corresponding to $\theta$. 
Then by considering the $KAK$-decomposition of $H$, we have 
\[
\mu(H) = W \mathfrak{a}_\mathfrak{h} \cap \mathfrak{a}_+.
\]

Then as a corollary to Fact \ref{fact:propernesscriterion},
we have the following claim which will applied in Section \ref{section:Zariski-dense}:
\begin{corollary}\label{cor:sl_2-proper}
Let $\rho\colon SL(2,\R) \rightarrow G$ be a Lie group homomorphism with $\rho(A_0) \in \mathfrak{a}_+$, where we put 
\[
A_0 := \begin{pmatrix} 1 & 0 \\ 0 &  -1\end{pmatrix} \in \mathfrak{sl}(2,\R).
\]
Then the $SL(2,\R)$-action on $G/H$ via $\rho$ is proper if and only if $\rho(A_0) \not \in W \mathfrak{a}_\mathfrak{h}$.
\end{corollary}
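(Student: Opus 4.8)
The plan is to reduce the statement, via the properness criterion (Fact~\ref{fact:propernesscriterion}) and the formula $\mu(H)=W\mathfrak a_{\mathfrak h}\cap\mathfrak a_+$ recorded just before it, to an elementary assertion about a ray in the vector space $\mathfrak a$. Throughout I write $\rho(A_0)$ for $d\rho(A_0)$, as in the statement, and I first dispose of the degenerate case: if $\rho$ is trivial then $\rho(A_0)=0\in W\mathfrak a_{\mathfrak h}$ and the trivial action of the noncompact group $SL(2,\R)$ on $G/H$ is not proper, so both sides of the equivalence are false; hence I may assume $\rho$ is nontrivial, i.e.\ $\rho(A_0)\neq 0$.

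Using the $KAK$-decomposition $SL(2,\R)=SO(2)\,\{\exp(tA_0)\mid t\geq 0\}\,SO(2)$ and that $\rho$ is a Lie group homomorphism, one gets $\rho(SL(2,\R))=C_0\,A^\rho\,C_0$ with $C_0:=\rho(SO(2))$ compact and $A^\rho:=\{\exp(t\rho(A_0))\mid t\geq 0\}$. A routine manipulation with Definition~\ref{definition:tilde_fork} — absorbing the compact factors $C_0$ into the compact set $C$, and using that $\rho(A_0)\neq 0$ makes $t\mapsto\exp(t\rho(A_0))$ a homeomorphism onto $A^\rho$ — shows that the $SL(2,\R)$-action on $G/H$ via $\rho$ is proper if and only if $A^\rho\pitchfork H$ in $G$. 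By Fact~\ref{fact:propernesscriterion} this is equivalent to $\mu(A^\rho)\pitchfork\mu(H)$ in $\mathfrak a$. Since $\rho(A_0)\in\mathfrak a_+$ and $\mathfrak a_+$ is a closed cone, $\mu(\exp(t\rho(A_0)))=t\rho(A_0)$ for every $t\geq 0$, so $\mu(A^\rho)=\R_{\geq 0}\,\rho(A_0)$; and $\mu(H)=W\mathfrak a_{\mathfrak h}\cap\mathfrak a_+$. Hence it remains to prove that $\R_{\geq 0}\,\rho(A_0)\pitchfork(W\mathfrak a_{\mathfrak h}\cap\mathfrak a_+)$ in $\mathfrak a$ if and only if $\rho(A_0)\notin W\mathfrak a_{\mathfrak h}$.

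For the last step, fix a $W$-invariant inner product on $\mathfrak a$. If $\rho(A_0)\in w\mathfrak a_{\mathfrak h}$ for some $w\in W$, then the ray $\R_{\geq 0}\,\rho(A_0)$ lies in the linear subspace $w\mathfrak a_{\mathfrak h}$ and, being a subset of the cone $\mathfrak a_+$ containing $\rho(A_0)$, in $W\mathfrak a_{\mathfrak h}\cap\mathfrak a_+$; as $\rho(A_0)\neq 0$ this ray is unbounded, so $\pitchfork$ fails. Conversely, if $\rho(A_0)\notin W\mathfrak a_{\mathfrak h}=\bigcup_{w\in W}w\mathfrak a_{\mathfrak h}$ (a finite union), then $\delta_w:=\mathrm{dist}(\rho(A_0),w\mathfrak a_{\mathfrak h})>0$ for each $w$, and by homogeneity of the distance to a subspace $\mathrm{dist}(t\rho(A_0),w\mathfrak a_{\mathfrak h})=t\,\delta_w$ for $t\geq 0$; thus for any compact $C\subseteq\mathfrak a$ contained in the ball of radius $r$, the set $\R_{\geq 0}\,\rho(A_0)\cap\bigl((W\mathfrak a_{\mathfrak h}\cap\mathfrak a_+)+C\bigr)$ lies in $\{t\rho(A_0)\mid 0\leq t\leq r/\min_{w}\delta_w\}$ and so is bounded, giving $\pitchfork$. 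This completes the proof; the only mildly delicate points are the bookkeeping reducing $\rho(SL(2,\R))$ to the semigroup $A^\rho$ (which genuinely uses $\rho(A_0)\neq 0$) and keeping the Weyl chamber $\mathfrak a_+$ and the subspaces $w\mathfrak a_{\mathfrak h}$ straight in the last step.
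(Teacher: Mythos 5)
Your argument is correct and is precisely the route the paper intends when it states this as an immediate corollary of Fact~\ref{fact:propernesscriterion} together with the formula $\mu(H)=W\mathfrak{a}_{\mathfrak{h}}\cap\mathfrak{a}_+$: reduce to the semigroup $A^{\rho}$ via the $KAK$-decomposition of $SL(2,\R)$, compute $\mu(A^{\rho})=\R_{\geq 0}\,\rho(A_0)$, and compare the ray with the finite union of subspaces $W\mathfrak{a}_{\mathfrak{h}}$. Your separate treatment of the trivial $\rho$ and the distance/homogeneity argument at the end are fine, so there is nothing to add.
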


Let us write $w_0$ for the longest element of $W$ with respect to $\Sigma_+$, and define the opposition involution 
\[
\iota\colon \mathfrak{a} \rightarrow \mathfrak{a}, ~ A \mapsto -w_0(A).
\]
Then the closed Weyl chamber $\mathfrak{a}_+$ is stable by $\iota$.
Let us put 
\[
\mathfrak{b} := \{ A \in \mathfrak{a} \mid \iota(A) = A \}.
\]
and $\mathfrak{b}_+ := \mathfrak{b} \cap \mathfrak{a}_+$. 
Note that the opposition involution $\iota$ 
acts as $-1$ on $\mathfrak{z}(\mathfrak{g}) \cap \mathfrak{a}$.
In particular, $\mathfrak{b}$ is contained in the semisimple part $\mathfrak{g}_{\mathrm{ss}} := [\mathfrak{g},\mathfrak{g}]$ of $\mathfrak{g}$.

The following fact gives a criterion for existence of non-virtually abelian discontinuous groups for $G/H$ in terms of $\mathfrak{b}_+$ and $\mathfrak{a}_\mathfrak{h}$:

\begin{fact}[{\cite[Th\'{e}or\`{e}me 1.1]{Benoist96}}]
\label{fact:Benoist}
The following three conditions on $(G,H)$ are equivalent:
\begin{enumerate}
    \item $G/H$ admits a discontinuous group which is not virtually abelian.
    \item $G/H$ admits a discontinuous group which is a non-abelian free group and Zariski-dense in $G$.
    \item $\mathfrak{b}_+ \not \subset W \mathfrak{a}_\mathfrak{h}$.
\end{enumerate}
\end{fact}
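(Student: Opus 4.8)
The plan is to prove the cycle of implications $(2)\Rightarrow(1)\Rightarrow(3)\Rightarrow(2)$. The step $(2)\Rightarrow(1)$ is immediate, since a non-abelian free group has no finite-index abelian subgroup. For the remaining two steps the common device is to reformulate properness, via the properness criterion (Fact~\ref{fact:propernesscriterion}) and the identity $\mu(H)=W\mathfrak{a}_\mathfrak{h}\cap\mathfrak{a}_+$, in terms of the \emph{limit cone} of a discrete subgroup $\Gamma\subset G$, i.e.\ the set $\mathcal{L}_\Gamma\subset\mathfrak{a}_+$ of all limits of sequences $\mu(\gamma_n)/\lVert\mu(\gamma_n)\rVert$ with $\gamma_n\in\Gamma$. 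Two elementary observations underlie everything. First, $\mu(g^{-1})=\iota(\mu(g))$ for all $g\in G$ (invert a $KAK$-decomposition and fold by $w_0$), so $\mathcal{L}_\Gamma$ is stable under the opposition involution $\iota$. Second, unwinding Definition~\ref{definition:tilde_fork} in the vector space $\mathfrak{a}$ and using $d(X,W\mathfrak{a}_\mathfrak{h})=d(X,W\mathfrak{a}_\mathfrak{h}\cap\mathfrak{a}_+)$ for $X\in\mathfrak{a}_+$, the $\Gamma$-action on $G/H$ is proper if and only if $d\bigl(\mu(\gamma),W\mathfrak{a}_\mathfrak{h}\bigr)\to\infty$ as $\gamma\to\infty$ in $\Gamma$; in particular any sequence $\gamma_n\to\infty$ in $\Gamma$ whose Cartan projections stay within bounded distance of $W\mathfrak{a}_\mathfrak{h}$ obstructs properness. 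I would also reduce at the outset to the case that $G$ is semisimple, treating the general reductive case by the remarks recorded in the appendix.

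For $(1)\Rightarrow(3)$ I would argue the contrapositive: assuming $\mathfrak{b}_+\subset W\mathfrak{a}_\mathfrak{h}$, show that every discrete $\Gamma$ acting properly discontinuously on $G/H$ is virtually abelian. It suffices to treat finitely generated $\Gamma$. By the Tits alternative either $\Gamma$ is virtually solvable — a case handled separately, since such a $\Gamma$ is virtually polycyclic and a short direct analysis of its Cartan projections suffices — or $\Gamma$ contains a free non-abelian subgroup, and then, after replacing $\Gamma$ by an appropriate subgroup and passing to the semisimple part of its Zariski closure, one may assume $\Gamma$ is Zariski-dense in a connected semisimple algebraic group. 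In that case Benoist's limit-cone theorem gives that $\mathcal{L}_\Gamma$ is a closed convex cone with nonempty interior; being the limit cone of a group, it is $\iota$-stable, so for a suitable $0\neq X\in\mathcal{L}_\Gamma$ the vector $\tfrac12(X+\iota X)$ is a nonzero $\iota$-fixed element of $\mathcal{L}_\Gamma$ — nonzero because the $(-1)$-eigenspace of $\iota$ meets the salient cone $\mathfrak{a}_+$ only at $0$ — hence a nonzero $Y\in\mathfrak{b}_+$. Finally I would invoke Benoist's asymptotic estimates for Cartan projections of products — $\mu(\gamma_1^{m}\gamma_2^{n})=m\lambda(\gamma_1)+n\lambda(\gamma_2)+O(1)$ along suitable $(m,n)$ for elements $\gamma_1,\gamma_2\in\Gamma$ with $\R$-regular products, $\lambda$ denoting the Jordan projection — together with the elementary fact that the lattice $\{m\lambda(\gamma_1)+n\lambda(\gamma_2)\}$ approximates any ray in its spanned cone up to bounded error, to produce a sequence $\delta_n\to\infty$ in $\Gamma$ whose Cartan projections remain within bounded distance of the ray $\R_{\geq0}Y\subset\mathfrak{b}_+\subset W\mathfrak{a}_\mathfrak{h}$; by the second observation this contradicts properness.

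For $(3)\Rightarrow(2)$ I would build the group explicitly. Since $W\mathfrak{a}_\mathfrak{h}\cap\mathfrak{a}_+$ is a closed $\iota$-stable cone and $\mathfrak{b}_+\not\subset W\mathfrak{a}_\mathfrak{h}$, I can choose $X\in\mathfrak{b}_+\setminus W\mathfrak{a}_\mathfrak{h}$ and a small closed $\iota$-stable convex cone $\mathcal{C}\subset\mathfrak{a}_+$ with nonempty interior that contains $X$ and meets $W\mathfrak{a}_\mathfrak{h}$ only at $0$. Using the ping-pong construction of Zariski-dense free subgroups of $G$ with prescribed limit cone — loxodromic ($\R$-regular) elements with sufficiently generic attracting and repelling flags, together with the Abels--Margulis--Soifer uniform loxodromy lemmas and Benoist's control of Cartan projections of Schottky words — I would produce a free, non-abelian, Zariski-dense subgroup $\Gamma=\langle a,b\rangle$ of $G$ for which every $\mu(\gamma)$, $\gamma\in\Gamma$, lies within bounded distance of $\mathcal{C}$. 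Then $d\bigl(\mu(\gamma),W\mathfrak{a}_\mathfrak{h}\bigr)\geq c\lVert\mu(\gamma)\rVert-O(1)\to\infty$ for some $c>0$, so by the second observation the $\Gamma$-action on $G/H$ is proper; together with discreteness (ping-pong), freeness and Zariski-density, this gives $(2)$.

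The main obstacle is the analytic input shared by both directions: the precise uniform control of Cartan and Jordan projections of products of elements of $G$ — Benoist's regularity and asymptotic estimates, equivalently the Abels--Margulis--Soifer loxodromy and ping-pong lemmas and the convexity of the limit cone. It is exactly this that forces the limit cone of a non-virtually-abelian discrete group to be convex and thick enough to meet $\mathfrak{b}_+$ in $(1)\Rightarrow(3)$, and that lets one confine the limit cone of a free Schottky subgroup to a narrow prescribed $\iota$-stable cone in $(3)\Rightarrow(2)$. The remaining ingredients — the reformulation through the properness criterion, the $\iota$-invariance of limit cones, the convex geometry in $\mathfrak{a}_+$, and the reductions via the Tits alternative and to the semisimple case — are comparatively routine.
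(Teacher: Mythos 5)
Your route is essentially the one the paper itself relies on: it quotes Benoist's Th\'{e}or\`{e}me 1.1 for semisimple $G$, and its appendix isolates exactly your two pillars --- Fact~\ref{fact:Benoist_free_necessary} (if $\mu(\Gamma)\pitchfork\mathfrak{b}_+$ then $\Gamma$ is virtually abelian, which via the properness criterion and $\mu(H)=W\mathfrak{a}_{\mathfrak{h}}\cap\mathfrak{a}_+$ gives $(1)\Rightarrow(3)$) and Fact~\ref{fact:Benoist_Cartan_cone} (Zariski-dense discrete free subgroups with $\mu(\Gamma)$ confined to a prescribed $\iota$-stable convex cone, which gives $(3)\Rightarrow(2)$) --- so you are re-deriving, not replacing, that argument. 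Within your sketch, however, two steps are genuinely unsupported. First, in $(1)\Rightarrow(3)$ you dispose of the virtually solvable case with ``a short direct analysis of its Cartan projections''. That case carries real content: a non-virtually-abelian discrete nilpotent group (a discrete Heisenberg group, say) has no loxodromic elements, all Jordan projections vanish, and Cartan projections grow sublinearly, so the limit-cone convexity and the lattice $\{m\lambda(\gamma_1)+n\lambda(\gamma_2)\}$ mechanism you invoke produce nothing there; one needs a separate argument (for instance an estimate of the form $\mu(u^n)=\log n\cdot \overline{h}_u+O(1)$ for unipotent $u$, where $\overline{h}_u\in\mathfrak{a}_+$ comes from a Jacobson--Morozov triple and lies in $\mathfrak{b}_+$ by the positivity statement of Proposition~\ref{proposition:sl2b}), and for general polycyclic $\Gamma$ this is precisely the substance of Fact~\ref{fact:Benoist_free_necessary}, which your outline neither proves nor cites.

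Second, ``reduce at the outset to the case that $G$ is semisimple, treating the general reductive case by the remarks recorded in the appendix'' defers exactly the part of the proof that the paper actually supplies, so for this exercise it is circular; and the reduction is not routine for condition (2): a Schottky subgroup of the semisimple part is never Zariski-dense in a reductive $G$ with positive-dimensional central torus, so the generators of your ping-pong group must be twisted by a central element generating a Zariski-dense subgroup of the identity component of the center, after which injectivity, discreteness, Zariski-density and the confinement of the Cartan projections to the cone have to be re-verified --- this is the content of Lemma~\ref{lemma:Benoist-Schottky}. A smaller caution in $(1)\Rightarrow(3)$: Benoist's convexity theorem applies to the limit cone inside the Weyl chamber of the Zariski closure $L$ of your subgroup, not inside $\mathfrak{a}_+$ of $G$ (the folded image in $\mathfrak{a}_+$ is a finite union of cones and need not be convex), so the nonzero $\iota$-fixed direction should be produced intrinsically, e.g.\ as the $\mathfrak{a}_+$-representative of $\lambda(\gamma)+\lambda(\gamma^{-1})$ for suitable $\gamma$, rather than by averaging $X$ and $\iota X$ in a cone whose convexity in $\mathfrak{a}_+$ you have not established; this point is repairable, but the first two require actual arguments (or the citations the paper uses) before the proposal can be considered a proof.
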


\begin{remark}
Benoist \cite{Benoist96} gives a proof of Fact \ref{fact:Benoist} only for the case where $G$ is semisimple.
In Appendix, we will give some remarks for Fact \ref{fact:Benoist} in our setting. 
\end{remark}

For stability for proper discontinuity,
the following fact generalizing Kobayashi \cite[Theorem 2.4]{Kobayashi98}, which will plays a key role 
in our proofs of Theorems \ref{theorem:any_sl2_nonstandard} and \ref{theorem:ZariskidenseSurface_on_symm}, 
is known:

\begin{fact}[Kassel {\cite[Theorem 1.3]{Kassel12}}]
\label{fact:stability-for-properness}
Let $L$ be a connected linear simple Lie group of real rank $1$, and $\rho\colon L\rightarrow G$ a Lie group homomorphism 
such that the $L$-action on $G/H$ via $\rho$ is proper.
Assume that a torsion-free discrete subgroup $\Gamma$ 
of $L$ is cocompact in $L$.
Then there exists a neighborhood $W$ of 
$\rho|_{\Gamma}$ in $\operatorname{Hom}(\Gamma,G)$
such that 
for any $\rho'\in W$, the representation $\rho'$ is discrete and faithful, 
and that the $\Gamma$-action on $G/H$ via $\rho'$
is properly discontinuous.
\end{fact}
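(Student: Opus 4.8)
The plan is to translate the statement, via the properness criterion (Fact~\ref{fact:propernesscriterion}), into a quantitative ``linear drift'' estimate on Cartan projections, and then to show that this estimate is stable under small deformations.

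\textbf{Step 1 (reformulation).} After conjugating $\rho$ we may assume $d\rho(\mathfrak{a}_{L}) \subset \mathfrak{a}$ and $d\rho(\mathfrak{a}_{L}^{+}) \subset \mathfrak{a}_{+}$, where $\mathfrak{a}_{L}$ is a maximal split abelian subspace of $\mathfrak{l} := \Lie(L)$ with a fixed closed Weyl chamber $\mathfrak{a}_{L}^{+}$; since $L$ is simple of real rank $1$ we have $\dim\mathfrak{a}_{L} = 1$, so $\mathfrak{a}_{L}^{+} = \R_{\ge 0}X_{L}$ for a unit vector $X_{L}$, and we set $Y_{0} := d\rho(X_{L}) \in \mathfrak{a}_{+}$. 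Since $\rho(K_{L})$ is compact for a maximal compact $K_{L}$ of $L$, a routine estimate ($\mu$ changes by a bounded amount under left/right multiplication by a fixed compact set) gives a constant $C_{0}$ with $\|\mu(\rho(g)) - d\rho(\mu_{L}(g))\| \le C_{0}$ for all $g \in L$, where $\mu_{L}\colon L \to \mathfrak{a}_{L}^{+}$ is the Cartan projection of $L$; hence $\mu(\rho(L))$ stays within bounded distance of the ray $\R_{\ge 0}Y_{0}$. Using $\mu(H) = W\mathfrak{a}_{\mathfrak{h}} \cap \mathfrak{a}_{+}$ and Fact~\ref{fact:propernesscriterion} (cf.\ Corollary~\ref{cor:sl_2-proper}), properness of the $L$-action on $G/H$ via $\rho$ is equivalent to $Y_{0} \notin W\mathfrak{a}_{\mathfrak{h}}$, and since $W\mathfrak{a}_{\mathfrak{h}}$ is a closed cone this yields a constant $c_{0} > 0$ with $d(tY_{0}, \mu(H)) \ge c_{0}t$ for all $t \ge 0$.

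\textbf{Step 2 (drift for $\rho|_{\Gamma}$, and its stability).} Because $\Gamma$ is torsion-free and cocompact in the rank-one group $L$, it is word-hyperbolic and the orbit map $\Gamma \to L$ is a quasi-isometric embedding, so $\|\mu_{L}(\gamma)\| \asymp |\gamma|_{S}$ for a fixed finite generating set $S$ of $\Gamma$. Combined with Step 1 this gives constants $c_{1} > 0$, $C_{1}$ with
\[
d\bigl(\mu(\rho(\gamma)),\, \mu(H)\bigr) \ \ge\ c_{1}\,|\gamma|_{S} - C_{1} \qquad (\gamma \in \Gamma),
\]
i.e.\ the pair $(\rho|_{\Gamma}, H)$ is ``sharp''. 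The heart of the proof is to produce a neighborhood $W$ of $\rho|_{\Gamma}$ in $\Hom(\Gamma, G)$ together with \emph{uniform} constants $c_{2} > 0$, $C_{2}$ such that $d(\mu(\rho'(\gamma)), \mu(H)) \ge c_{2}|\gamma|_{S} - C_{2}$ for all $\rho' \in W$ and all $\gamma \in \Gamma$. I would argue as follows: $\rho|_{\Gamma}\colon \Gamma \to L$ is convex cocompact, hence Anosov with respect to a minimal parabolic of $L$, so its composition with $\rho$ is a $P_{\theta}$-Anosov representation $\Gamma \to G$, where $\theta$ is the set of simple restricted roots of $(\mathfrak{g},\mathfrak{a})$ not vanishing on $Y_{0}$ (post-composition of an Anosov representation with a Lie group homomorphism is Anosov with respect to the parabolic determined by the homomorphism). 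Anosov representations form an open subset of $\Hom(\Gamma, G)$ on which the relevant Cartan-projection estimates --- linear lower bounds on the $\theta$-coordinates of $\mu(\rho'(\gamma))$, and convergence of the directions $\mu(\rho'(\gamma))/|\gamma|_{S}$ to a small cone around $\R_{\ge 0}Y_{0}$ --- hold with locally uniform constants; since $\R_{\ge 0}Y_{0}$ is at distance $\ge c_{0}\|{\cdot}\|$ from the closed cone $\mu(H)$, shrinking $W$ yields the desired uniform estimate. (Alternatively, one can bypass Anosov theory and run Kassel's original local-to-global argument: compare $\mu(\rho'(w))$ with $\mu(\rho(w))$ on words $w$ of bounded length and propagate the linear lower bound to all words using a Morse-type lemma for the hyperbolic group $\Gamma$ that is itself stable under perturbing $\rho'$.)

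\textbf{Step 3 (conclusion) and the main obstacle.} The uniform estimate of Step 2 does everything: for each $\rho' \in W$ it shows $\|\mu(\rho'(\gamma))\| \to \infty$ as $\gamma \to \infty$, so $\rho'(\Gamma)$ is discrete and $\rho'$ is a quasi-isometric embedding, in particular faithful; and for any compact $C \subset G$ the relation $\rho'(\gamma) \in CHC$ forces $\mu(\rho'(\gamma))$ into a bounded neighborhood of $\mu(H)$, hence $|\gamma|_{S}$ bounded, hence only finitely many such $\gamma$, so $\rho'(\Gamma) \pitchfork H$ in $G$ --- i.e.\ the $\Gamma$-action on $G/H$ via $\rho'$ is properly discontinuous, again by Fact~\ref{fact:propernesscriterion}. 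The main obstacle is precisely Step 2: obtaining the drift estimate with constants valid \emph{simultaneously} on a whole neighborhood $W$ rather than separately for each $\rho'$. This is where one genuinely needs either the openness of the Anosov condition together with local uniformity of its Cartan-projection inequalities, or Kassel's hyperbolic-group local-to-global argument; the reformulation via $\mu$, the rank-one comparison $\|\mu_{L}(\gamma)\| \asymp |\gamma|_{S}$, and the deductions in Step 3 are by comparison routine.
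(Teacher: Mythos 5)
First, note that the paper does not prove this statement: it is imported verbatim as a Fact from Kassel \cite[Theorem 1.3]{Kassel12}, so there is no internal proof to compare yours against; the benchmark is Kassel's original argument. Your Step 1 (reduction, via Fact~\ref{fact:propernesscriterion} and the bounded comparison $\|\mu(\rho(g))-d\rho(\mu_{L}(g))\|\le C_{0}$, to a linear drift of the ray $\R_{\ge 0}Y_{0}$ away from $\mu(H)=W\mathfrak{a}_{\mathfrak{h}}\cap\mathfrak{a}_{+}$) and Step 3 (deriving discreteness, faithfulness via torsion-freeness, and $\rho'(\Gamma)\pitchfork H$ from a drift estimate that is uniform on $W$) are correct, and this is indeed the architecture of Kassel's proof.

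The gap is in Step 2, which you rightly identify as the crux but do not actually close. The fallback option (``run Kassel's original local-to-global argument'') is a citation of the very result to be proved rather than an argument. The Anosov route, as stated, is incomplete at exactly the decisive point: openness of the Anosov condition does give locally uniform linear lower bounds on $\langle\alpha,\mu(\rho'(\gamma))\rangle$ for $\alpha\in\theta$, but it does not by itself yield your second assertion that the directions $\mu(\rho'(\gamma))/|\gamma|_{S}$ remain, uniformly over a neighborhood, in a small cone around $\R_{\ge 0}Y_{0}$. Linear growth of the $\theta$-roots cannot suffice for properness, since $W\mathfrak{a}_{\mathfrak{h}}$ may contain $\theta$-regular (even regular) directions; what is needed is control of the whole Cartan projection relative to the undeformed one, i.e.\ an estimate of the type $\|\mu(\rho'(\gamma))-\mu(\rho(\gamma))\|\le \epsilon\,|\gamma|_{S}+C_{\epsilon}$ with $\epsilon$ small on a small neighborhood. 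That comparison is precisely Kassel's main technical result (proved using negative curvature of the rank-one symmetric space of $L$ and Morse-type stability of quasi-geodesics), and extracting it instead from quantitative Anosov theory (Gu\'eritaud--Guichard--Kassel--Wienhard, Kapovich--Leeb--Porti) requires a genuine additional argument about how Cartan projections, or the limit cone, vary under deformation --- it is not a formal consequence of ``Anosov is open with uniform constants.'' So the outline is faithful to the cited source, but the central estimate is asserted rather than proved.
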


\section{Deformations of surface subgroups of real reductive groups}
\label{section:zariski-closure-of-small-deformation}

Let $\Sigma_{g}$ be a closed oriented surface of 
genus $g\geq 2$. In this section, we study a representation of the surface group $\pi_{1}(\Sigma_{g})$  into a \reductive~$G$ 
whose Zariski closure is locally isomorphic to $SL(2,\R)$. 
We give its small deformations in $G$ by a bending construction (Lemma \ref{lemma:deform-of-surface-group} (\ref{lemma:deform-of-surface-group-def})), 
and determine their Zariski closures (Theorem \ref{theorem:deformation-zariski-closure}).
Theorem \ref{theorem:deformation-zariski-closure} will play an important role in our proofs of Theorems \ref{theorem:any_sl2_nonstandard} and \ref{theorem:ZariskidenseSurface_on_symm}.

Throughout this section, we use the symbol $A_0$ for the matrix 
\[
A_0 := \begin{pmatrix} 1 & 0 \\ 0 &  -1\end{pmatrix} \in \mathfrak{sl}(2,\R).
\]

\subsection{Zariski closures of surface subgroups in a \reductive.}

In this subsection, we explain
a key result (Theorem \ref{theorem:deformation-zariski-closure}). 
Let us fix a real reductive group $G$ with Lie algebra $\mathfrak{g}$ in the sense of Section \ref{subsection:terminology}.
We first define a Lie subalgebra $\mathfrak{g}^{\rho}_{\even}$ of $\mathfrak{g}$ associated to a Lie group homomorphism $\rho\colon SL(2,\R)\rightarrow G$.

\begin{definition}\label{definition:geven}
For $k\in\Z$ and $A\in\mathfrak{g}$, we put 
\[
\mathfrak{g}(k;A) := \{ X \in \mathfrak{g} \mid [A,X] = k X \}.
\]
For each Lie algebra homomorphism $\rho\colon \mathfrak{sl}(2,\R) \rightarrow \mathfrak{g}$, let us define 
\[
\mathfrak{g}^\rho_{\even} := \bigoplus_{k \in \Z} \mathfrak{g}(2k;\rho(A_0)).
\]
\end{definition}

For $k\in\mathbb{N}$ let $V_{k}$ 
be the real form of a $k$-dimensional complex irreducible 
representation of $SL(2,\mathbb{R})$.
The following is obvious:
\begin{lemma}
\label{lemma:rho-even}
Let $\rho\colon SL(2,\R)
\rightarrow G$ be 
a Lie group homomorphism, and 
we regard the Lie algebra $\mathfrak{g}$ as 
an $SL(2,\R)$-module by the adjoint action
$\mathrm{Ad}\circ\rho$.
Then $\mathfrak{g}^{\rho}_{\even}$
is the sum of $SL(2,\R)$-submodules 
of $\mathfrak{g}$ isomorphic to $V_{2i+1}$
for some $i\in\N$.
\end{lemma}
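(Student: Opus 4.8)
The plan is to reduce the statement to the elementary weight theory of $\mathfrak{sl}(2,\R)$-representations, applied to each irreducible constituent of $\mathfrak{g}$. Since $SL(2,\R)$ is semisimple, the finite-dimensional representation of $SL(2,\R)$ on $\mathfrak{g}$ given by $\Ad\circ\rho$ is completely reducible; this holds even though $\mathfrak{g}$ is merely reductive rather than semisimple. So first I would fix a decomposition $\mathfrak{g}=\bigoplus_{j=1}^{r}W_j$ into irreducible $SL(2,\R)$-submodules, with $W_j\cong V_{n_j}$ for suitable integers $n_j\geq 1$. The goal is then to show that $\mathfrak{g}^{\rho}_{\even}=\bigoplus_{n_j\ \mathrm{odd}}W_j$, which is exactly the assertion that $\mathfrak{g}^{\rho}_{\even}$ is a sum of submodules each isomorphic to some $V_{2i+1}$ (with $i=0$ accounting for the trivial summands, which carry the weight $0$).

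The main step is the standard computation of $\ad(\rho(A_0))$-weights, where $\rho(A_0)$ denotes the image of $A_0=\diag(1,-1)$ under the differential of $\rho$. On the irreducible module $V_n$ the semisimple element $A_0$ acts with the integer weights $n-1,n-3,\dots,-(n-1)$; in particular all of these weights are even exactly when $n$ is odd, and all are odd exactly when $n$ is even. Now each subspace $\mathfrak{g}(2k;\rho(A_0))$ is by definition a $\ad(\rho(A_0))$-eigenspace, and each $W_j$ is $\ad(\rho(A_0))$-stable, so the eigenspace decomposition of $\mathfrak{g}$ for $\ad(\rho(A_0))$ refines the decomposition $\mathfrak{g}=\bigoplus_j W_j$; consequently $\mathfrak{g}^{\rho}_{\even}=\bigoplus_j\bigl(\mathfrak{g}^{\rho}_{\even}\cap W_j\bigr)$, and $\mathfrak{g}^{\rho}_{\even}\cap W_j$ is the sum of the even-weight spaces of $W_j$, hence equals $W_j$ if $n_j$ is odd and $\{0\}$ if $n_j$ is even. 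This yields $\mathfrak{g}^{\rho}_{\even}=\bigoplus_{n_j\ \mathrm{odd}}W_j$, as desired.

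There is essentially no obstacle here — the statement is genuinely routine, as the paper indicates. The only points I would be careful about are the normalization (one must use $A_0=\diag(1,-1)$, so that $V_2$ has weights $\pm 1$ and in general $V_n$ has weights of parity opposite to that of $n$) and the bookkeeping remark that the $\ad(\rho(A_0))$-weight decomposition refines the irreducible decomposition, which is what lets one pass from ``each $W_j$ is either contained in $\mathfrak{g}^{\rho}_{\even}$ or meets it trivially'' to the exact equality $\mathfrak{g}^{\rho}_{\even}=\bigoplus_{n_j\ \mathrm{odd}}W_j$. If one prefers a decomposition-free way to see at least that $\mathfrak{g}^{\rho}_{\even}$ is a submodule, one can note that the raising and lowering operators of $d\rho(\mathfrak{sl}(2,\R))$ shift $\ad(\rho(A_0))$-weights by $\pm 2$ and hence preserve the sum of the even-weight spaces.
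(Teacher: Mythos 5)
Your argument is correct: the paper states this lemma without proof (it is introduced as ``obvious''), and the intended justification is exactly the routine weight-parity computation you give — complete reducibility of $\Ad\circ\rho$, the fact that $V_n$ has $\ad(\rho(A_0))$-weights $n-1,n-3,\dots,-(n-1)$, all even precisely when $n$ is odd, and the observation that the eigenspace decomposition refines any irreducible decomposition, so $\mathfrak{g}^{\rho}_{\even}$ is the sum of the odd-dimensional isotypic pieces. Your normalization of $A_0=\diag(1,-1)$ and the remark that the trivial module corresponds to $i=0$ are consistent with the paper's conventions, so nothing is missing.
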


Here, we prove the statement announced in Definition~\ref{intro_def:rho-G-g} and that $\mathfrak{g}^\rho_{\even}$ coincides with the Lie algebra of $G_{\even}^{\rho}$. 
\begin{lemma}
\label{lemma:rho-G-g}
In the setting of Definition \ref{intro_def:rho-G-g} the following hold:
\begin{enumerate}
    \item 
    $\sigma\in \mathbf{G}(\R)$.
    \item
    The algebraic subgroup $\mathbf{G}_{\even}^{\rho}$ of $\mathbf{G}$ is defined over $\R$.
    \item \label{item:lemma:rho-G-g:LiealgGrhoeven}
    $\mathfrak{g}^\rho_{\even}$ coincides with
    the Lie subalgebra of $\mathfrak{g}$
    corresponding to the subgroup $G_{\even}^{\rho}$.
\end{enumerate}
\end{lemma}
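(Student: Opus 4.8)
The plan is to prove the three assertions of Lemma~\ref{lemma:rho-G-g} in the order stated, reducing everything to the structure theory of the $\mathfrak{sl}(2)$-triple attached to $\rho$. Write $H_0 := d\rho(A_0)$, so that $\sigma = \exp(\pi\sqrt{-1}\,\ad\text{-compatible lift of }H_0)$ acts on $\mathfrak{g}\otimes\C$ as $\mathrm{Ad}(\sigma)X = (-1)^{k}X$ on the eigenspace $\mathfrak{g}(k;H_0)\otimes\C$, because $\ad H_0$ has integer eigenvalues. The element $\sigma$ is thus an involution in $\mathbf{G}(\C)$, and $\mathbf{G}_{\even}^{\rho}$ is by definition the Zariski identity component of its centralizer.

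\emph{Step (i): $\sigma\in\mathbf{G}(\R)$.} First I would note that $\rho$ restricts to a homomorphism of the maximal compact $SO(2)\subset SL(2,\R)$, and that $\sigma$ is the image, under $d\rho$ extended to the complexification, of the element $\exp(\pi\sqrt{-1}A_0)$. Using the isomorphism $SL(2,\C)\supset\{\exp(t\sqrt{-1}A_0)\}$ one checks that $\exp(\pi\sqrt{-1}A_0) = \mathrm{diag}(\sqrt{-1}^{2},\sqrt{-1}^{-2})$-type computation actually lands in $SL(2,\R)$ — more precisely, conjugating the split torus to the compact torus $SO(2)$ of $SL(2,\R)$, the element $\exp(\pi\sqrt{-1}A_0)$ corresponds to the rotation by $\pi$, i.e. $-I\in SL(2,\R)$. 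Hence $\sigma = \rho(-I)\in\rho(SL(2,\R))\subset G\subset\mathbf{G}(\R)$. This is the cleanest route and avoids any delicate reality argument; the point is simply that the a priori complex-looking exponential is the image of the real element $-I$.

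\emph{Step (ii): $\mathbf{G}_{\even}^{\rho}$ is defined over $\R$.} Since $\sigma\in\mathbf{G}(\R)$ by (i), the centralizer $Z_{\mathbf{G}}(\sigma)$ is a Zariski-closed subgroup defined over $\R$ (it is the fiber over $\sigma$ of the $\R$-morphism $x\mapsto \sigma^{-1}x\sigma x^{-1}$, or equivalently the fixed-point subgroup of the $\R$-rational involution $\mathrm{Int}(\sigma)$). The identity component of an algebraic group defined over a perfect field is again defined over that field, so $\mathbf{G}_{\even}^{\rho} = Z_{\mathbf{G}}(\sigma)^{\circ}$ is defined over $\R$. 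I would cite the standard fact (Borel, \emph{Linear Algebraic Groups}) that for $k$ perfect, $(\mathbf{H})^{\circ}$ is $k$-closed whenever $\mathbf{H}$ is.

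\emph{Step (iii): $\mathrm{Lie}(G_{\even}^{\rho}) = \mathfrak{g}^{\rho}_{\even}$.} The Lie algebra of $Z_{\mathbf{G}}(\sigma)$ is the fixed subalgebra $\mathfrak{g}^{\mathrm{Ad}(\sigma)}$ inside $\mathfrak{g}\otimes\C$, intersected with $\mathfrak{g}$; and since $\mathrm{Ad}(\sigma)$ acts by $(-1)^{k}$ on $\mathfrak{g}(k;H_0)$, its fixed space is exactly $\bigoplus_{k}\mathfrak{g}(2k;H_0) = \mathfrak{g}^{\rho}_{\even}$. Passing to the identity component does not change the Lie algebra, and taking $G^{\rho}_{\even} = G\cap\mathbf{G}^{\rho}_{\even}(\R)$ as an open subgroup of $\mathbf{G}^{\rho}_{\even}(\R)$ preserves it as well, giving the claim. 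I would also invoke Lemma~\ref{lemma:rho-even} to see that $\mathfrak{g}^{\rho}_{\even}$ is visibly a subalgebra (sum of odd-dimensional $SL(2,\R)$-submodules, equivalently the $+1$-eigenspace of an involutive automorphism), which makes the identification with a Lie subgroup transparent.

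The only genuinely nontrivial point is Step~(i): correctly identifying the complex exponential $\exp(\pi\sqrt{-1}\,\ad H_0)$-defining element as $\rho(-I)$ rather than something merely conjugate to a real element. The remaining steps are formal consequences of $\sigma$ being a real involution together with the eigenvalue bookkeeping for $\ad H_0$, so I expect those to be routine.
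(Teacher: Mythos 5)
Your proposal is correct, and for parts (ii) and (iii) it is essentially the paper's argument: the centralizer of the real point $\sigma$ is defined over $\R$, its Zariski identity component remains defined over $\R$ (Borel), and $\mathrm{Ad}(\sigma)=\exp(\pi\sqrt{-1}\,\mathrm{ad}(d\rho(A_0)))$ acts by $(-1)^k$ on $\mathfrak{g}(k;\rho(A_0))$, so its fixed space in $\mathfrak{g}$ is exactly $\mathfrak{g}^{\rho}_{\even}$. Where you genuinely diverge is (i). The paper fixes a faithful rational representation $\phi$ of $\mathbf{G}$ defined over $\R$ and notes that $\sigma\overline{\sigma}^{-1}=\exp(2\pi\sqrt{-1}\,d\rho(A_0))$ maps to the identity because $d\phi(d\rho(A_0))$ is semisimple with integer eigenvalues, whence $\overline{\sigma}=\sigma$. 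You instead identify $\sigma=\rho(-I)$: the complexified differential $\mathfrak{sl}(2,\C)\to\mathfrak{g}\otimes\C$ integrates, since $SL(2,\C)$ is simply connected, to $\widetilde{\rho}\colon SL(2,\C)\to\mathbf{G}(\C)$ restricting to $\rho$ on $SL(2,\R)$, and $\exp(\pi\sqrt{-1}A_0)=\diag(e^{\pi\sqrt{-1}},e^{-\pi\sqrt{-1}})=-I$, so $\sigma=\widetilde{\rho}(-I)=\rho(-I)\in G\subset\mathbf{G}(\R)$. This buys a slightly stronger statement ($\sigma$ lies in $\rho(SL(2,\R))$, not merely in $\mathbf{G}(\R)$) and avoids choosing a representation, at the cost of the integration step, which you should state explicitly rather than gesture at; the paper's route stays entirely inside $\mathbf{G}(\C)$ and uses only the integrality of the eigenvalues of $d\rho(A_0)$. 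Two cosmetic remarks: the computation $\exp(\pi\sqrt{-1}A_0)=-I$ is immediate and needs no conjugation of the split torus into $SO(2)$, and in (iii) your identification $\sigma=\rho(-I)$ would also let you conclude directly from Lemma~\ref{lemma:rho-even} that the fixed space of $\mathrm{Ad}(\sigma)$ is the sum of the odd-dimensional isotypic components, i.e.\ $\mathfrak{g}^{\rho}_{\even}$.
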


\begin{proof}

\textbf{(i).} We fix a faithful rational
representation $\phi$ of 
$\mathbf{G}$ defined over $\R$, and
denote the complex conjugate 
of $g\in \mathbf{G}(\C)$ by $\bar{g}$.
Since $\phi(\rho(A_{0}))$ is semisimple and 
all its eigenvalues are integers,
the image of $\sigma\overline{\sigma}^{-1}=
\mathrm{exp}(2\pi\sqrt{-1}\rho(A_{0}))$ 
under $\phi$ is the identity matrix.
Hence $\overline{\sigma}=\sigma$, 
and thus we obtain $\sigma \in \mathbf{G}(\R)$. 

\noindent\textbf{(ii).}
It follows from (i) that the centralizer of $\sigma$ is defined over $\R$.
Hence we see from \cite[Proposition 1.2 (b)]{Borel-alg-grp} that its identity component
$\mathbf{G}_{\even}^{\rho}$ is also defined over $\R$.

\noindent\textbf{(iii).}
For $k\in \N$ and $X\in \mathfrak{g}(k;\rho(A_0))$, we have
\[
\exp(\pi\sqrt{-1}\mathrm{ad}(\rho(A_{0})))(X)=(-1)^{k}X.
\]
Hence we obtain
\begin{align*}
\mathfrak{g}^{\rho}_{\even}&=\{X\in \mathfrak{g}\mid
\exp(\pi\sqrt{-1}\mathrm{ad}(\rho(A_{0}))(X)=X\} \\
&= \{X\in \mathfrak{g}\mid
\mathrm{Ad}(\sigma)(X)=X\}, 
\end{align*}
which is the Lie subalgebra corresponding to the subgroup $G^{\rho}_{\even}$.
\end{proof}

The following lemma will be applied in Section \ref{section:non-standard-deformation}:
\begin{lemma}
\label{lemma:grhoev_fulrank}
The equality 
\[
\rank_{\R} G^{\rho}_{\even} = \rank_\R G
\]
holds, that is, 
there exists a maximal split abelian subspace $\mathfrak{a}$ of $\mathfrak{g}$ contained in $\mathfrak{g}^{\rho}_{\even}$.
\end{lemma}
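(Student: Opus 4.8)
The plan is to produce a maximal split abelian subspace $\mathfrak{a}$ of $\mathfrak{g}$ that lies inside $\mathfrak{g}^\rho_\even$; since $G^\rho_\even$ and $G$ share the same Cartan projection target once such an $\mathfrak{a}$ is contained in $\mathfrak{g}^\rho_\even$, and since $\mathfrak{g}^\rho_\even$ is a reductive subalgebra containing a Cartan-stable complement, this immediately gives the rank equality. Recall from Lemma~\ref{lemma:rho-even} that $\mathfrak{g}^\rho_\even$ is precisely the sum of the odd-dimensional $\mathfrak{sl}(2,\R)$-isotypic components of $\mathfrak{g}$ under $\Ad\circ\rho$; equivalently it is the $+1$-eigenspace of $\Ad(\sigma)$, and its orthogonal complement (with respect to a suitable invariant form) is the $-1$-eigenspace, the sum of the even-dimensional isotypic pieces. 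The element $\sigma = \exp(\pi\sqrt{-1}\,d\rho(A_0))$ is an involution in $\mathbf{G}(\R)$ by Lemma~\ref{lemma:rho-G-g}, so $\Ad(\sigma)$ is an involutive automorphism of $\mathfrak{g}$ and $\mathfrak{g} = \mathfrak{g}^\rho_\even \oplus \mathfrak{q}$ is a symmetric-space decomposition.

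The key step is to choose the data compatibly. First conjugate so that $\rho$ restricted to the diagonal torus is adapted to the Cartan decomposition: by the Jacobson--Morozov / Kostant--Sekiguchi normalization (or simply by averaging), we may arrange a Cartan involution $\theta$ of $G$ such that $d\rho(A_0) \in \mathfrak{p}$ and $d\rho$ of the compact generator lies in $\mathfrak{k}$. Then $\sigma$ commutes with $\theta$ (as $\sigma$ is a power of $\exp$ of an element of $\mathfrak{p}$ composed with nothing in $\mathfrak{k}$, one checks $\theta(\sigma) = \sigma$ directly from $\theta(d\rho(A_0)) = -d\rho(A_0)$ and the integrality of eigenvalues, exactly as in the proof of Lemma~\ref{lemma:rho-G-g}(i)). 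Hence $\theta$ preserves $\mathfrak{g}^\rho_\even$, giving a Cartan involution on the reductive subalgebra $\mathfrak{g}^\rho_\even$ with Cartan decomposition $(\mathfrak{g}^\rho_\even \cap \mathfrak{k}) \oplus (\mathfrak{g}^\rho_\even \cap \mathfrak{p})$. Now $d\rho(A_0) \in \mathfrak{p}$, and since $d\rho(A_0)$ sits in the zero-weight space $\mathfrak{g}(0;d\rho(A_0)) \subset \mathfrak{g}^\rho_\even$, we have $d\rho(A_0) \in \mathfrak{g}^\rho_\even \cap \mathfrak{p}$.

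From here I would argue: pick a maximal abelian subspace $\mathfrak{a}'$ of $\mathfrak{g}^\rho_\even \cap \mathfrak{p}$ containing $d\rho(A_0)$, and extend $\mathfrak{a}'$ to a maximal abelian subspace $\mathfrak{a}$ of $\mathfrak{p}$. It remains to show $\mathfrak{a} = \mathfrak{a}'$, i.e. $\mathfrak{a} \subset \mathfrak{g}^\rho_\even$. The centralizer $\mathfrak{z}_\mathfrak{g}(\mathfrak{a}')$ contains $\mathfrak{a}$; and because $\Ad(\sigma)$ acts on the $\mathfrak{sl}(2,\R)$-module $\mathfrak{g}$ as $(-1)^{(\text{weight})}$ relative to $d\rho(A_0)$, any element of $\mathfrak{p}$ commuting with $d\rho(A_0)$ lies in the zero $d\rho(A_0)$-weight space hence is fixed by $\Ad(\sigma)$; since $\mathfrak{a}$ is abelian it centralizes $d\rho(A_0) \in \mathfrak{a}'\subset\mathfrak{a}$, so $\mathfrak{a} \subset \mathfrak{g}(0;d\rho(A_0)) \subset \mathfrak{g}^\rho_\even$, as desired. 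Thus $\mathfrak{a}$ is a maximal split abelian subspace of $\mathfrak{g}$ contained in $\mathfrak{g}^\rho_\even$, and the rank equality follows.

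The main obstacle I anticipate is the normalization step: making precise that one may choose $\theta$ with $d\rho(A_0) \in \mathfrak{p}$ \emph{and} $\theta(\sigma) = \sigma$ simultaneously, and checking carefully that $\theta$ then genuinely preserves $\mathfrak{g}^\rho_\even$ so that the induced decomposition of the subalgebra is its Cartan decomposition. Everything after that is a short weight-space computation using that $\Ad(\sigma)$ acts by the parity of the $d\rho(A_0)$-eigenvalue, which is the content already recorded in the proof of Lemma~\ref{lemma:rho-G-g}(iii).
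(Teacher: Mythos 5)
Your proposal is correct and its decisive step is exactly the paper's argument: $\rho(A_0)$ is hyperbolic, so it lies in some maximal split abelian subspace $\mathfrak{a}\subset\mathfrak{p}$, and then $\mathfrak{a}\subset\mathfrak{g}(0;\rho(A_0))\subset\mathfrak{g}^{\rho}_{\even}$ simply because $\mathfrak{a}$ is abelian and contains $\rho(A_0)$ (this inclusion is immediate from Definition~\ref{definition:geven}). The normalization you flag as the main obstacle --- choosing $\theta$ with $\theta(\sigma)=\sigma$ and checking that $\theta$ preserves $\mathfrak{g}^{\rho}_{\even}$, as well as the detour through $\mathfrak{a}'$ --- is not needed at all; the paper dispenses with it.
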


\begin{proof}
One can easily check that $\rho(A_0)$ is hyperbolic in $\mathfrak{g}$,
and hence there exists a maximal split abelian subspace $\mathfrak{a}$ of $\mathfrak{g}$ with $\rho(A_0) \in \mathfrak{a}$.
Then we have 
\[
\mathfrak{a} \subset \mathfrak{g}(0;\rho(A_0)) \subset \mathfrak{g}^\rho_{\even}.
\]
This proves our claim.
\end{proof}

\begin{definition}
    \label{definition:fcc-closure}
    Let $G$ be a Lie group with finitely many connected components
and $\mathfrak{g}$ the Lie algebra of $G$.
For each subgroup $\Gamma$ of $G$, 
we denote by $\overline{\Gamma}^{\mathrm{fcc}}$ the intersection of closed Lie subgroups of $G$ with finitely many connected components which contain
$\Gamma$.
\end{definition}

\begin{remark}
    The Lie group
    $\overline{\Gamma}^{\text{fcc}}$ 
    may have infinitely many connected components.
\end{remark}

For a finite-dimensional real representation $V$ of 
$SL(2,\mathbb{R})$,
we denote by $[V:V_{k}]$ the multiplicity of the $k$-dimensional real irreducible representation $V_{k}$ in $V$.
The goal of this section is to prove the following theorem generalizing Theorem \ref{maintheorem:deformation-zariski-closure}: 
\begin{theorem}
\label{theorem:deformation-zariski-closure}
Let $G$ be a \reductive,
$\rho\colon SL(2,\mathbb{R})\rightarrow G$ 
a Lie group homomorphism, 
and $\Gamma_{g}$ a discrete subgroup of $SL(2,\mathbb{R})$ isomorphic to $\pi_{1}(\Sigma_{g})$ with genus $g\geq 2$.
Let $G'$ be a Zariski-connected real algebraic subgroup of $G$ satisfying 
\begin{align}
\label{assumption:SL-G'-Grho}
\rho(SL(2,\R)) \subset G' \subset G^{\rho}_{\even},
\end{align}
and we regard its Lie algebra $\mathfrak{g}'$ as an $SL(2,\mathbb{R})$-module by the adjoint 
action $\Ad\circ\rho$. 
Assume 
\begin{align}
    \label{eq:genus-condition}
    g\geq \sum_{i\in\mathbb{N}} [\mathfrak{g}':V_{2i+1}].
\end{align}
Then the following claims hold:
\begin{enumerate}
    \item For any neighborhood $W$ of $\rho|_{\Gamma_{g}}$ 
    in $\mathrm{Hom}(\Gamma_{g},G)$, there exists $\rho'\in W$ such that 
    the Zariski closure $\overline{\rho'(\Gamma_{g})}^{\Zariski}$ coincides with $G'$.
    \item Furthermore assume that $\mathfrak{g}'$ contains the centralizer $\mathfrak{z}_{\mathfrak{g}}(\rho(\mathfrak{sl}(2,\R)))$. Then for any neighborhood $W$ of $\rho|_{\Gamma_{g}}$ in $\mathrm{Hom}(\Gamma_{g},G)$, there exists $\rho'\in W$ such that $\rho'$ satisfies the condition in (i), and that $\overline{\rho'(\Gamma_{g})}^{\mathrm{fcc}}$ contains the Euclidean identity component $G'_{0}$ of $G'$.
\end{enumerate}
\end{theorem}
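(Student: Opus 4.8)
The goal is to deform $\rho|_{\Gamma_g}$ inside $G$ so that the Zariski closure becomes exactly $G'$. The strategy is a bending construction along simple closed curves on $\Sigma_g$, using the action of the centralizer directions of $\rho$ inside $\mathfrak{g}'$. First I would choose a standard generating set of $\Gamma_g=\pi_1(\Sigma_g)$ realized geometrically by simple closed geodesics $c_1,\dots,c_g$ (one per handle, say), pairwise disjoint, each separating off a handle or at least giving a pants-type decomposition compatible with $g$ bending loci; the hypothesis \eqref{eq:genus-condition} $g\geq \sum_i [\mathfrak{g}':V_{2i+1}]$ is exactly what guarantees we have enough disjoint curves to ``hit'' every $SL(2,\R)$-isotypic component of $\mathfrak{g}'$ occurring in $\mathfrak{g}^\rho_{\even}$. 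For each curve $c_j$ whose $\rho$-image is generated by a hyperbolic element $\gamma_j$ with centralizer containing a one-parameter subgroup, the bending deformation $\rho_t$ replaces the holonomy by inserting $\exp(tX_j)$ for $X_j$ in the centralizer $\mathfrak{z}_{\mathfrak{g}'}(\rho(\gamma_j))$ of that loop's image; since $\rho(\Gamma_g)$ lies in a subgroup locally isomorphic to $SL(2,\R)$ and the $X_j$ can be chosen outside $\rho(\mathfrak{sl}(2,\R))$, the deformed representations leave the $SL(2,\R)$-orbit. This is precisely Lemma~\ref{lemma:deform-of-surface-group}, which I would invoke.

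**Controlling the Zariski closure.**
Next I would analyze the Zariski closure $\mathbf{H}:=\overline{\rho'(\Gamma_g)}^{\Zariski}$ of the bent representation. Two inclusions are needed. The inclusion $\mathbf{H}\subseteq G'$ is automatic: each inserted element $\exp(t X_j)$ lies in $G'$ (choose $X_j\in\mathfrak{g}'$) and $\rho(\Gamma_g)\subset \rho(SL(2,\R))\subset G'$, so the whole group generated lies in $G'$, and $G'$ is Zariski-closed by hypothesis. The reverse inclusion is the substantive point. Here I would argue that the Lie algebra $\mathfrak{h}=\Lie(\mathbf{H})$ is $\Ad\circ\rho$-stable (because $\rho(SL(2,\R))$ normalizes $\mathbf H$ for a suitably symmetric choice of bending data — or more carefully, because one can arrange $\rho(SL(2,\R))\subseteq \mathbf{H}$ in the limit $t\to 0$ and then use that nearby $\mathbf{H}$ must still contain enough of it), hence $\mathfrak{h}$ is a sum of $SL(2,\R)$-subrepresentations of $\mathfrak{g}'$. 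Then for generic small $t$ and generic choices of $X_j$, the bending directions $X_j$ together with $\rho(\mathfrak{sl}(2,\R))$ and their iterated brackets generate all of $\mathfrak{g}'$: picking $X_j$ in distinct isotypic components (one per component, using \eqref{eq:genus-condition}) forces $\mathfrak{h}$ to meet every isotypic component of $\mathfrak{g}'$ nontrivially, and an $SL(2,\R)$-irreducible subrepresentation meeting a $V_{2i+1}$-isotypic block, once you take the $\mathfrak{g}'$-module generated (using that $\mathfrak{g}'$ is a Lie algebra and $G'$ is connected, hence $\mathfrak{g}'$ is ``generated'' appropriately), fills that block; a dimension count then gives $\mathfrak{h}=\mathfrak{g}'$. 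Combined with Zariski-connectedness of $G'$ this yields $\mathbf{H}=G'$, proving (i).

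**Part (ii) and the main obstacle.**
For (ii), the extra hypothesis $\mathfrak{z}_{\mathfrak{g}}(\rho(\mathfrak{sl}(2,\R)))\subseteq \mathfrak{g}'$ is used to ensure that the bending can be done along directions in the centralizer of the \emph{whole} $SL(2,\R)$, not just of individual loops; this makes the deformation compatible with keeping $\rho(SL(2,\R))$ (and not merely $\rho(\Gamma_g)$) inside the image, so that $\overline{\rho'(\Gamma_g)}^{\mathrm{fcc}}$, which contains both $\rho(\Gamma_g)$ and the one-parameter bending groups $\exp(\R X_j)$, contains a full set of generators for $\mathfrak{g}'$ as a Lie algebra, hence contains the Euclidean identity component $G'_0$ by the correspondence between connected Lie subgroups and Lie subalgebras. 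I would derive this from part (i) by noting $\overline{\rho'(\Gamma_g)}^{\mathrm{fcc}}$ is a closed Lie subgroup whose Lie algebra is $\Ad\circ\rho$-stable and contains the centralizer directions, then applying the same isotypic/bracket-generation argument at the level of Lie algebras. The main obstacle I anticipate is the reverse inclusion $\mathfrak{g}'\subseteq \mathfrak{h}$: making rigorous that the bending directions, spread across $g\geq\sum_i[\mathfrak{g}':V_{2i+1}]$ disjoint curves, genuinely generate all isotypic components — this requires a careful bookkeeping of which centralizer $\mathfrak{z}_{\mathfrak{g}'}(\rho(\gamma_j))$ meets which $V_{2i+1}$-block (each highest-weight line of an odd irreducible contributes to the centralizer of a hyperbolic element), plus a genericity argument (a Zariski-open nonemptiness condition on the tuple $(t; X_1,\dots,X_g)$) to conclude that the closure is \emph{exactly} $G'$ rather than a proper $\Ad\circ\rho$-stable subalgebra. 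The semicontinuity of Zariski closure under deformation, together with an explicit witness tuple, should close this gap.
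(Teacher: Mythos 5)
Your construction step is essentially the paper's own (Lemma~\ref{lemma:deform-of-surface-group}: keep the images of the $a_k$ and multiply the images of the $b_k$ by $e^{tX_k}$ with $\Ad(\rho(a_k))X_k=X_k$), but the substantive step --- proving that the Lie algebra $\mathfrak{h}$ of $\overline{\rho'(\Gamma_g)}^{\Zariski}$ contains all of $\mathfrak{g}'$ --- is exactly what you defer to ``bookkeeping plus a genericity argument,'' and the sketch you give of it fails where it matters. You allocate one bending direction per \emph{isotypic component} and claim that once $\mathfrak{h}$ meets a $V_{2i+1}$-block, ``the $\mathfrak{g}'$-module generated'' fills the block. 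This is false when $[\mathfrak{g}':V_{2i+1}]>1$: a single copy of $V_{2i+1}$ embedded diagonally in the block is $\Ad\circ\rho$-stable, closed under bracket considerations relevant here, and meets the block without filling it; moreover $\mathfrak{h}$ is not $\mathfrak{g}'$-stable (it is not an ideal), so there is no ``$\mathfrak{g}'$-module generated'' to invoke. This is precisely why the bound \eqref{eq:genus-condition} is a sum of \emph{multiplicities}: the paper takes one bending vector per irreducible summand $V_{i,j}$ (a vector fixed by $\Ad(\rho(a_{f(i,j)}))$), puts the elements $Z_{i,j}(t)=\tfrac{1}{t}\bigl(\Ad(e^{tX_{i,j}})Y_{i,j}-Y_{i,j}\bigr)$ into $\mathfrak{h}$, and proves by explicit norm inequalities, valid for every sufficiently small irrational $t>0$, that the $[\mathfrak{g}':V_{2i+1}]$ projections restricted to $\mathfrak{h}$ are linearly independent, which forces full multiplicity. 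No genericity (Zariski-open nonemptiness in $(t;X_1,\dots,X_g)$) is used, and you give no argument that such a statement is available; as it stands this is the main gap and it is not closed.

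Two further points are missing. First, the trivial isotypic part: bending only places the cyclic groups $\exp(\Z tX_{0,j})$ into the closure, and to conclude that the closure contains $\exp(\R X_{0,j})$, hence the centralizer $\mathfrak{z}_{\mathfrak{g}'}(\rho(\mathfrak{sl}(2,\R)))$, the paper must choose the $X_{0,j}$ elliptic with $\exp(X_{0,j})=1$, hyperbolic, or nilpotent (Property~$(*)$, Lemmas~\ref{lemma:Zariski-dense-basis-of-AG-construction} and \ref{lemma:Zariski-dense-basis-of-AG}); your sketch ignores this, and it is indispensable in part (ii), where $\overline{\rho'(\Gamma_g)}^{\mathrm{fcc}}$ is merely a closed Lie subgroup with finitely many components, so Zariski-density arguments are unavailable and the paper has to combine Euclidean density of irrational rotations with Chevalley's theorem (Fact~\ref{fact:chevalley}) applied to the abelianization of $\overline{L_t}^{\Zariski}$. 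Saying the fcc-closure ``contains a full set of generators for $\mathfrak{g}'$ hence contains $G'_0$'' begs the question: what it visibly contains is a discrete set, not one-parameter subgroups. Second, your justification that $\rho(SL(2,\R))\subset\overline{\rho'(\Gamma_g)}^{\Zariski}$ (``suitably symmetric bending data,'' or a limit as $t\to0$) is not a proof; the paper gets it because the $a_k$-images are untouched and two of them generate a non-abelian group of hyperbolic elements, which is Zariski-dense in $SL(2,\R)$ (Lemma~\ref{lemma:zariski-dense-surface-group-sl2}), with Fact~\ref{fact:chevalley} handling non-algebraic subgroups.
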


As an immediate consequence of the theorem (consider the case where $G'=G^{\rho}_{\even}$), 
we get the following two known examples: 

\begin{example}
\label{example:deform-trivial}
If $g\geq \dim G$, there exists a Zariski-dense representation of $\pi_{1}(\Sigma_{g})$ to $G$
sufficiently close to the trivial representation.
\end{example}

\begin{example}
\label{example:deform-hitchin}
Let $\rho\colon SL(2,\R)\rightarrow SL(n,\R)$ be an $n$-dimensional irreducible representation of $SL(2,\R)$
and $\Gamma_{g}$ a discrete subgroup of $SL(2,\R)$ isomorphic to $\pi_{1}(\Sigma_{g})$.
If $g\geq n-1$, then
there exists a discrete faithful representation $\rho'\in\Hom(\Gamma_{g},SL(n,\R))$
sufficiently close to $\rho|_{\Gamma_{g}}$ such that $\rho'(\Gamma_{g})$ is Zariski-dense in $SL(n,\R)$.
\end{example}

The following obvious lemma will be applied in Section \ref{section:examples}
to compute the right hand side of the genus assumption (\ref{eq:genus-condition}):
\begin{lemma}\label{lem:lower-bound-of-genus}
    In the setting of Theorem \ref{theorem:deformation-zariski-closure}, one has
    \[
    \sum_{i\in\mathbb{N}} [\mathfrak{g}:V_{2i+1}] = \dim_{\R} \{X\in \mathfrak{g}\mid [\rho(A_{0}),X]=0 \}.
    \]
\end{lemma}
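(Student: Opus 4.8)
The plan is to decompose $\mathfrak{g}$ into irreducible $SL(2,\R)$-submodules and count dimensions against the kernel of $\ad\rho(A_0)$. First I would fix an $SL(2,\R)$-module decomposition $\mathfrak{g} \cong \bigoplus_{k \geq 1} V_k^{\oplus m_k}$, where $m_k = [\mathfrak{g}:V_k]$ is the multiplicity of the $k$-dimensional real irreducible $V_k$. Since $\ad$ and $\rho$ are fixed once and for all, $\rho(A_0)$ acts on $\mathfrak{g}$ as $d(\Ad\circ\rho)$ evaluated at $A_0$, so the subspace $\{X \in \mathfrak{g} \mid [\rho(A_0),X] = 0\}$ is exactly the $0$-weight space of $\mathfrak{g}$ as an $\mathfrak{sl}(2,\R)$-module (the semisimple element $A_0$ acts with integer eigenvalues on each $V_k$).

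The key computation is then purely representation-theoretic: the $k$-dimensional irreducible $V_k$ has weights $k-1, k-3, \dots, -(k-1)$, each with multiplicity one, so $V_k$ has a nonzero $0$-weight space (of dimension $1$) precisely when $k$ is odd, and the $0$-weight space is trivial when $k$ is even. Taking $0$-weight spaces commutes with direct sums, so
\[
\dim_\R \{X \in \mathfrak{g} \mid [\rho(A_0),X] = 0\} = \sum_{k \geq 1} m_k \cdot \dim_\R (V_k)_0 = \sum_{i \in \N} m_{2i+1} = \sum_{i \in \N} [\mathfrak{g}:V_{2i+1}],
\]
which is exactly the claimed identity. (Here one uses that the weight-space decomposition of a real $\mathfrak{sl}(2,\R)$-module, or equivalently the eigenspace decomposition for the hyperbolic element $\rho(A_0)$, matches that of its complexification, so no subtlety arises from working over $\R$ rather than $\C$.)

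Since the lemma is labeled ``obvious'' in the text, there is essentially no obstacle; the only point requiring a word of care is the passage between ``$0$-weight space of the $\mathfrak{sl}(2,\R)$-action'' and ``$\ad\rho(A_0)$-kernel,'' which is immediate because $\rho(A_0)$ generates the Cartan of the standard $\mathfrak{sl}(2,\R)$-triple attached to $\rho$ and acts diagonalizably with the stated integer weights on each $V_k$. Everything else is the standard weight count for $SL(2,\R)$-representations, which I would simply cite or state in one line.
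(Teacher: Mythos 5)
Your proof is correct and is exactly the argument the paper intends: the paper states this lemma without proof (labelling it ``obvious''), and your computation --- decomposing $\mathfrak{g}$ under $\Ad\circ\rho$ into the real irreducibles $V_k$, noting that the kernel of $\ad\rho(A_0)$ on $V_k$ is one-dimensional precisely when $k$ is odd and zero otherwise, and summing over the isotypic decomposition --- is the standard justification. The one point you flag, that the real eigenspace of the hyperbolic element $\rho(A_0)$ has the same dimension as the zero-weight space of the complexification, is handled adequately, so there is no gap.
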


\subsection{Lemmas for the proof of 
Theorem \ref{theorem:deformation-zariski-closure}}
In this subsection, we prove three lemmas (Lemmas~\ref{lemma:deform-of-surface-group}, \ref{lemma:Zariski-dense-basis-of-AG-construction} and \ref{lemma:Zariski-dense-basis-of-AG}) for 
the proof of Theorem \ref{theorem:deformation-zariski-closure}.

We often apply the following fact:
\begin{fact}[Chevalley {\cite[Chapitre 2, Th\'{e}or\`{e}me 13]{Chevalley51}}]
\label{fact:chevalley}
    Let $G \subset GL(n,\R)$ be a closed subgroup with finitely many connected components.
    We denote by $\mathfrak{g}$ and $\overline{\mathfrak{g}}^{\Zariski}$ 
    the real Lie algebra of $G$ and that of its Zariski closure $\overline{G}^{\Zariski}$ in $GL(n,\R)$, respectively. Then we have 
    \[
    [\overline{\mathfrak{g}}^{\Zariski},\overline{\mathfrak{g}}^{\Zariski}]= [\mathfrak{g},\mathfrak{g}].
    \]
\end{fact}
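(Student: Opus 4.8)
The inclusion $[\mathfrak g,\mathfrak g]\subseteq[\overline{\mathfrak g}^{\Zariski},\overline{\mathfrak g}^{\Zariski}]$ is immediate from $\mathfrak g\subseteq\overline{\mathfrak g}^{\Zariski}$, so all the content is in the reverse inclusion, and the plan is to reduce it to two classical facts about linear Lie algebras. First I would reduce to the case that $G$ is connected: the identity component $G^{0}$ has finite index in $G$, so $\overline{G}^{\Zariski}$ is a finite union of translates of $\overline{G^{0}}^{\Zariski}$; hence these two algebraic groups have the same Zariski identity component and the same Lie algebra $\overline{\mathfrak g}^{\Zariski}$, while $\Lie G^{0}=\mathfrak g$. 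So assume $G$ is connected and write $H:=\overline{G}^{\Zariski}$ and $\mathfrak h:=\Lie H=\overline{\mathfrak g}^{\Zariski}$.

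Next I would reduce the statement to the assertion that $[\mathfrak g,\mathfrak g]$ is an algebraic Lie algebra. Since $\Ad\colon GL(n,\R)\to GL(\mathfrak{gl}(n,\R))$ is a morphism of algebraic groups, the stabilizer of the subspace $\mathfrak g$ is an algebraic subgroup of $GL(n,\R)$ containing $G$, hence containing $H$; thus $H$ normalizes $\mathfrak g$, and therefore also $[\mathfrak g,\mathfrak g]$, the connected subgroup $G=\langle\exp\mathfrak g\rangle$, and the characteristic subgroup $[G,G]$. In particular $\overline{[G,G]}^{\Zariski}$ is normal in $H$, and in the algebraic group $H/\overline{[G,G]}^{\Zariski}$ the image of $G$ is Zariski-dense and is a quotient of the abelian group $G/[G,G]$, so $H/\overline{[G,G]}^{\Zariski}$ is abelian. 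Hence $[\mathfrak h,\mathfrak h]\subseteq\Lie\,\overline{[G,G]}^{\Zariski}$, and it suffices to show that $[G,G]$ is Zariski-closed, for then $\Lie\,\overline{[G,G]}^{\Zariski}=\Lie[G,G]=[\mathfrak g,\mathfrak g]$ and the two inclusions combine to give the claim. Since $[G,G]$ is the connected Lie subgroup with Lie algebra $[\mathfrak g,\mathfrak g]$, this is exactly the statement that $[\mathfrak g,\mathfrak g]$ is algebraic.

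To prove that, I would fix a Levi decomposition $\mathfrak g=\mathfrak l\ltimes\mathfrak r$ with $\mathfrak r=\mathrm{rad}(\mathfrak g)$ solvable and $\mathfrak l$ semisimple, and put $\mathfrak n:=[\mathfrak g,\mathfrak r]$, an ideal of $\mathfrak g$ inside $\mathfrak r$, so that $[\mathfrak g,\mathfrak g]=[\mathfrak l,\mathfrak l]+[\mathfrak l,\mathfrak r]+[\mathfrak r,\mathfrak r]=\mathfrak l+\mathfrak n$. The first classical input is a sharpening of Lie's theorem: every element of $\mathfrak n=[\mathfrak g,\mathfrak r]$ is a nilpotent endomorphism of $\R^{n}$ (over $\C$, each generalized weight space of the solvable ideal $\mathfrak r$ is $\mathfrak g$-stable and its weight vanishes on $[\mathfrak g,\mathfrak r]$, so elements of $[\mathfrak g,\mathfrak r]\subseteq\mathfrak r$ act with eigenvalue $0$ on every weight space). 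Consequently $\mathfrak l\cap\mathfrak n=0$, since a semisimple Lie algebra has no nonzero ideal of nilpotent elements, so $[\mathfrak g,\mathfrak g]=\mathfrak l\oplus\mathfrak n$ with $\mathfrak n$ an ideal; the connected subgroup $N$ with $\Lie N=\mathfrak n$ is then unipotent, hence Zariski-closed. The second classical input is that a semisimple linear Lie algebra is algebraic, so $\mathfrak l=\Lie L$ for a Zariski-closed connected subgroup $L$. Since $L$ normalizes $N$ and acts on $N$ algebraically by conjugation, the product $Q:=LN$ is the image of the algebraic group $L\ltimes N$ under an algebraic homomorphism to $GL(n,\R)$, hence Zariski-closed; it is connected with $\Lie Q=\mathfrak l\oplus\mathfrak n=[\mathfrak g,\mathfrak g]$, so $Q=[G,G]$. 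Thus $[G,G]$ is Zariski-closed, and combined with the previous step this proves the theorem.

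The routine part is the chain of reductions (normalizer arguments, and the facts that the Zariski closure of a normal subgroup is normal and of an abelian subgroup is abelian). I expect the main obstacle to be the last step, that $[\mathfrak g,\mathfrak g]$ is an algebraic Lie algebra --- precisely the point that makes Chevalley's theorem nontrivial, since $\mathfrak h$ may contain ``irrational'' toral directions not present in $\mathfrak g$. The mechanism by which those directions vanish upon passing to derived algebras is the decomposition $[\mathfrak g,\mathfrak g]=\mathfrak l\oplus\mathfrak n$ into its semisimple part and a nilpotent ideal; the two genuinely nontrivial ingredients it rests on are the refinement of Lie's theorem that $[\mathfrak g,\mathrm{rad}(\mathfrak g)]$ consists of nilpotent endomorphisms and the algebraicity of semisimple linear Lie algebras, together with the standard fact that the image of a homomorphism of algebraic groups is Zariski-closed.
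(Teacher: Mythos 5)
The paper gives no proof of this statement: it is quoted as a Fact from Chevalley's book ({\cite[Chapitre 2, Th\'{e}or\`{e}me 13]{Chevalley51}}), so there is no in-paper argument to compare yours against. Your proof is a correct reconstruction of the classical argument. The reduction to connected $G$, the observation that $\overline{G}^{\Zariski}$ normalizes $G$ and hence that $[\overline{G}^{\Zariski},\overline{G}^{\Zariski}]\subseteq\overline{[G,G]}^{\Zariski}$ (most cleanly seen from the Zariski density of $G\times G$ in $\overline{G}^{\Zariski}\times\overline{G}^{\Zariski}$, rather than via real quotient groups), and the identification of the real content as the algebraicity of $[\mathfrak g,\mathfrak g]=\mathfrak l\oplus\mathfrak n$ with $\mathfrak n=[\mathfrak g,\mathrm{rad}(\mathfrak g)]$ nilpotent, are exactly the standard route, resting on the two classical inputs you name.

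Two small points deserve tightening, though neither is a genuine gap. First, over $\R$ the image of a morphism of algebraic groups need not be Zariski closed on real points (e.g.\ $t\mapsto t^{2}$ on $\R^{\times}$), so ``$LN$ is the image of $L\ltimes N$, hence Zariski closed'' is too quick; it is rescued here because $L\cap N$ is a normal unipotent subgroup of the semisimple group $L$, hence trivial, so Galois descent gives $(L_{\C}N_{\C})(\R)=L(\R)\,N(\R)$ and $Q:=LN$ is indeed the real point set of a Zariski-closed group with Lie algebra $\mathfrak l\oplus\mathfrak n$. Second, a Zariski-closed $L$ with $\Lie L=\mathfrak l$ may fail to be Euclidean-connected (e.g.\ the Zariski closure of $SO(p,q)_{0}$ is all of $SO(p,q)$), so $Q$ need not literally equal the connected group $[G,G]$. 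But this does not matter: all your final step requires is that $[G,G]$ is contained in a Zariski-closed subgroup $Q$ with $\Lie Q=[\mathfrak g,\mathfrak g]$, which already forces $\Lie\overline{[G,G]}^{\Zariski}\subseteq[\mathfrak g,\mathfrak g]$ and, combined with your step two, gives the claim.
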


For each $g \geq 2$, 
we shall fix a generating set $\{ a_1,b_1,\dots,a_g,b_g \}$ 
of the surface group $\Gamma_g$ such as 
\begin{align}
\label{eq:generators-and-relations-surface-group}
\Gamma_{g}=
\langle a_{1},b_{1},\ldots,a_{g},b_{g}\mid 
[a_{1},b_{1}]\cdots[a_{g},b_{g}]=1 \rangle.
\end{align}
\begin{lemma}
\label{lemma:deform-of-surface-group}
Let us fix a representation $\rho_0 \colon \Gamma_{g}\rightarrow G$ of $\Gamma_{g}$
to a Lie group $G$.
Assume that $X_{1},\ldots,X_{g}\in\mathfrak{g}$
satisfies 
\begin{align}
\label{condition:deform-of-surface-group}
\mathrm{Ad}(\rho(a_{k}))(X_{k})=X_{k}
\ \ (1\leq k \leq g).
\end{align}
Then the following two assertions hold:
\begin{enumerate}
    \item
    \label{lemma:deform-of-surface-group-def}
    For each $t\in\R$, we put
\begin{align}
\label{def:rho-t}
\rho_{t}(a_{k}):= \rho_0(a_{k}),\ 
\rho_{t}(b_{k}):= \rho_0(b_{k})e^{tX_{k}}
\ \ (1\leq k \leq g).
\end{align}
Then $\rho_{t}$ defines a group homomorphism from
$\Gamma_{g}$ to $G$.
    \item 
    \label{lemma:deform-of-surface-group-property}
    We consider the case where $\Gamma_{g}$ is a discrete subgroup of $SL(2,\R)$.
    Assume that $G$ is a linear real algebraic group and that 
    there exists a Lie group homomorphism $\rho\colon SL(2,\R) \rightarrow G$ satisfying $\rho|_{\Gamma_{g}}=\rho_{0}$.
    Then for any $t \in \R$, the Lie group $\overline{\rho_{t}(\Gamma_{g})}^{\mathrm{fcc}}$ contains 
    $\rho(SL(2,\R))$ and $e^{tX_i}$ for $i = 1,\dots,g$. 
\end{enumerate}
\end{lemma}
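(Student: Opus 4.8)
For part~(i), the plan is to verify directly that $\rho_t$ respects the single defining relation of $\Gamma_g$. Since $\rho_t(a_k) = \rho_0(a_k)$ and $\rho_t(b_k) = \rho_0(b_k)e^{tX_k}$, I compute the commutator $[\rho_t(a_k),\rho_t(b_k)] = \rho_t(a_k)\rho_t(b_k)\rho_t(a_k)^{-1}\rho_t(b_k)^{-1}$ and observe that the factor $e^{tX_k}$ gets conjugated by $\rho_0(a_k)$, producing $e^{t\,\mathrm{Ad}(\rho(a_k))(X_k)} = e^{tX_k}$ by the hypothesis~\eqref{condition:deform-of-surface-group}. A short manipulation shows $[\rho_t(a_k),\rho_t(b_k)] = [\rho_0(a_k),\rho_0(b_k)]$: writing $c_k = \rho_0(a_k)$, $d_k = \rho_0(b_k)$, $u_k = e^{tX_k}$, one has $[\,c_k, d_k u_k\,] = c_k d_k u_k c_k^{-1} u_k^{-1} d_k^{-1} = c_k d_k c_k^{-1}\,(c_k u_k c_k^{-1})\,u_k^{-1} d_k^{-1} = c_k d_k c_k^{-1} d_k^{-1} = [c_k,d_k]$, using $c_k u_k c_k^{-1} = u_k$. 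Hence $\prod_{k=1}^g [\rho_t(a_k),\rho_t(b_k)] = \prod_{k=1}^g [\rho_0(a_k),\rho_0(b_k)] = 1$, so $\rho_t$ extends to a homomorphism on $\Gamma_g$ by the presentation~\eqref{eq:generators-and-relations-surface-group}; well-definedness follows since $\Gamma_g$ is a one-relator group and the relator is sent to the identity.

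For part~(ii), set $L := \overline{\rho_t(\Gamma_g)}^{\mathrm{fcc}}$, a closed Lie subgroup of $G$ with finitely many connected components containing $\rho_t(\Gamma_g)$. The plan is to apply Chevalley's theorem (Fact~\ref{fact:chevalley}): letting $\mathfrak{l}$ be the Lie algebra of $L$ and $\overline{\mathfrak{l}}^{\Zariski}$ that of its Zariski closure $M := \overline{L}^{\Zariski} = \overline{\rho_t(\Gamma_g)}^{\Zariski}$, we have $[\mathfrak{l},\mathfrak{l}] = [\overline{\mathfrak{l}}^{\Zariski},\overline{\mathfrak{l}}^{\Zariski}]$. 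Since $\Gamma_g$ is its own commutator subgroup up to finite index — more precisely, $\Gamma_g$ is perfect modulo finite? no — here I instead use the rigidity input: $\Gamma_g$ is a uniform lattice in $SL(2,\R)$ and by Weil rigidity / the fact that $\rho|_{\Gamma_g}$ generates a Zariski-dense subgroup of $\rho(SL(2,\R))$, the Zariski closure $\overline{\rho_t(\Gamma_g)}^{\Zariski}$ contains $\overline{\rho_0(\Gamma_g)}^{\Zariski} = \rho(SL(2,\R))$ (as $\Gamma_g$ is Zariski-dense in $SL(2,\R)$, its image under $\rho$ is Zariski-dense in $\rho(SL(2,\R))$). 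Thus $M \supseteq \rho(SL(2,\R))$, so $\overline{\mathfrak{l}}^{\Zariski} \supseteq \rho(\mathfrak{sl}(2,\R))$, whence $[\overline{\mathfrak{l}}^{\Zariski},\overline{\mathfrak{l}}^{\Zariski}] \supseteq \rho(\mathfrak{sl}(2,\R))$ (since $\mathfrak{sl}(2,\R)$ is perfect). Therefore $[\mathfrak{l},\mathfrak{l}] \supseteq \rho(\mathfrak{sl}(2,\R))$, so $\mathfrak{l} \supseteq \rho(\mathfrak{sl}(2,\R))$ and consequently $L$ contains the connected subgroup $\rho(SL(2,\R))$. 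Finally, since $\rho_t(b_k) = \rho_0(b_k)e^{tX_k} = \rho(b_k)e^{tX_k} \in L$ and $\rho(b_k) \in \rho(SL(2,\R)) \subseteq L$, we get $e^{tX_k} = \rho(b_k)^{-1}\rho_t(b_k) \in L$ for each $k = 1,\dots,g$.

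The main obstacle is the step in part~(ii) asserting that $\overline{\rho_t(\Gamma_g)}^{\Zariski}$ contains $\rho(SL(2,\R))$: this needs care because a priori deforming $b_k$ could, in principle, shrink the Zariski closure. The resolution is that the $a_k$ are undeformed, so $\rho_t(\Gamma_g)$ contains $\rho_0(a_k) = \rho(a_k)$ for all $k$, and these elements already generate a Zariski-dense subgroup of $\rho(SL(2,\R))$ whenever the $a_k$ together with the other generators do — which holds since $\Gamma_g$ is Zariski-dense in $SL(2,\R)$ and, more to the point, for a surface group the subgroup generated by $a_1,\dots,a_g$ alone is already infinite and non-elementary (it is free of rank $g \geq 2$), hence Zariski-dense in $SL(2,\R)$ by the classification of subgroups of $SL(2,\R)$. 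Thus the Zariski closure of $\rho_t(\Gamma_g)$ contains that of $\langle \rho(a_1),\dots,\rho(a_g)\rangle$, which is all of $\rho(SL(2,\R))$, and the argument goes through.
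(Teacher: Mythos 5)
Your part (i) is exactly the paper's argument: the bending relation $\rho_0(a_k)e^{tX_k}\rho_0(a_k)^{-1}=e^{tX_k}$, coming from \eqref{condition:deform-of-surface-group}, gives $[\rho_t(a_k),\rho_t(b_k)]=[\rho_0(a_k),\rho_0(b_k)]$, and the single defining relation is preserved. Part (ii) also follows the paper's route in substance (the untouched generators $a_k$ generate a non-abelian group of hyperbolic elements, hence a Zariski-dense subgroup of $SL(2,\R)$ as in Lemma~\ref{lemma:zariski-dense-surface-group-sl2}; Zariski-continuity of $\rho$ then gives $\overline{\rho_t(\Gamma_g)}^{\Zariski}\supset\rho(SL(2,\R))$; Chevalley's theorem transfers this to the Lie-group level; finally $e^{tX_k}=\rho(b_k)^{-1}\rho_t(b_k)$). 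However, there is one concrete flaw in your write-up: you set $L:=\overline{\rho_t(\Gamma_g)}^{\mathrm{fcc}}$ and declare it to be a closed Lie subgroup with \emph{finitely many} connected components. That is not known---the paper's remark following Definition~\ref{definition:fcc-closure} warns precisely that the fcc-closure may have infinitely many components---and the finiteness hypothesis in Fact~\ref{fact:chevalley} is not cosmetic: for a closed subgroup with infinitely many components the equality $[\mathfrak{l},\mathfrak{l}]=[\overline{\mathfrak{l}}^{\Zariski},\overline{\mathfrak{l}}^{\Zariski}]$ can fail (e.g.\ $L=SL(2,\Z)\subset SL(2,\R)$, where the left-hand side is $0$ and the right-hand side is $\mathfrak{sl}(2,\R)$). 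The repair is immediate and is what the paper does: fix an \emph{arbitrary} closed Lie subgroup $L_t$ of $G$ with finitely many connected components containing $\rho_t(\Gamma_g)$, run your argument verbatim for $L_t$, and conclude by intersecting over all such $L_t$, which is exactly the definition of $\overline{\rho_t(\Gamma_g)}^{\mathrm{fcc}}$.

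Two smaller points. The appeal to ``Weil rigidity'' in the middle of your part (ii) is a non sequitur---rigidity does not imply that the Zariski closure of a deformation contains that of the original representation---but it is harmless because your closing paragraph replaces it with the correct mechanism (the undeformed $a_k$), which is the paper's argument; you may simply delete the rigidity sentence. Also, the asserted equality $\overline{L}^{\Zariski}=\overline{\rho_t(\Gamma_g)}^{\Zariski}$ is neither needed nor automatic; only the inclusion $\overline{L}^{\Zariski}\supset\rho(SL(2,\R))$, which follows from $L\supset\rho_t(\Gamma_g)$, enters the Chevalley step.
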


\begin{proof}
Let us first prove (\ref{lemma:deform-of-surface-group-def}).
By the condition (\ref{condition:deform-of-surface-group})
and the formula (\ref{def:rho-t})
we see
\[
[\rho_{t}(a_{k}),\rho_{t}(b_{k})]=
[\rho(a_{k}),\rho(b_{k})]=\rho([a_{k},b_{k}])
\]
for any $1\leq k\leq g$. Thus we have
\[
[\rho_{t}(a_{1}),\rho_{t}(b_{1})]\cdots
[\rho_{t}(a_{g}),\rho_{t}(b_{g})]=
\rho([a_{1},b_{1}]\cdots[a_{g},b_{g}])
=1.
\]
Hence $\rho_{t}$ defines a group homomorphism from $\Gamma_{g}$
to $G$ thanks to
the presentation \eqref{eq:generators-and-relations-surface-group}.

Next we prove (\ref{lemma:deform-of-surface-group-property}).
Let $L_{t}$ be a Lie group with finitely many connected components which contains $\rho_{t}(\Gamma_{g})$.
It suffices to show that $e^{tX_{i}} \in L_{t}$
and $\rho(SL(2,\mathbb{R}))\subset L_{t}$.
The assertion $e^{tX_{i}} \in L_{t}$ follows from the assertion $\rho(SL(2,\mathbb{R}))\subset L_{t}$.
Indeed, if $\rho(SL(2,\mathbb{R}))\subset L_{t}$, then $\rho_{0}(b_{i})=\rho(b_{i})\in L_{t}$ for each $i=1,\ldots,g$. 
Hence we have $e^{tX_{i}}=\rho_{0}(b_{i})^{-1}\rho_{t}(b_{i})\in L_{t}$.

Let us show $\rho(SL(2,\mathbb{R}))\subset L_{t}$.
First suppose that $L_{t}$ is a real algebraic subgroup of $G$.
Then it follows from the elementary lemma below (Lemma \ref{lemma:zariski-dense-surface-group-sl2}) that
the subgroup generated by $a_{1},a_{2}\in \Gamma_{g}$ 
is Zariski-dense in $SL(2,\mathbb{R})$.
Since $\rho(a_{i})(=\rho_{t}(a_{i}))$ belongs to $L_{t}$ for each $i=1,2$,
we have $\rho(SL(2,\mathbb{R}))\subset L_{t}$.

For general $L_{t}$ with Lie algebra $\mathfrak{l}_{t}$, it follows from the above that 
the Zariski closure $\overline{L_{t}}^{\Zariski}$ contains $\rho(SL(2,\R))$.
Hence its Lie algebra $\overline{\mathfrak{l}_{t}}^{\Zariski}$ also contains $\rho(\mathfrak{sl}(2,\R))$.
By Fact \ref{fact:chevalley}, we obtain
\[
\mathfrak{l}_{t}\supset[\mathfrak{l}_{t},\mathfrak{l}_{t}]=[\overline{\mathfrak{l}_{t}}^{\Zariski},\overline{\mathfrak{l}_{t}}^{\Zariski}]
\supset [\rho(\mathfrak{sl}(2,\R)),\rho(\mathfrak{sl}(2,\R))]=\rho(\mathfrak{sl}(2,\R)).
\]
Thus $L_{t}$ also contains $\rho(SL(2,\R))$. This proves the assertion (ii).
\end{proof}

In this proof, we used the following lemma (we omit its proof): 
\begin{lemma}
\label{lemma:zariski-dense-surface-group-sl2}
Any non-abelian subgroup of hyperbolic elements of $SL(2,\R)$ 
is Zariski-dense.
\end{lemma}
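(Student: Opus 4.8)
The plan is to combine the classification of algebraic subgroups of $SL(2,\C)$ with the hypothesis on traces. Write $\mathbf{H}$ for the Zariski closure of $\Gamma$ in $SL(2,\C)$; since $\Gamma\subset SL(2,\R)$, the subgroup $\mathbf{H}$ is automatically defined over $\R$, and $\Gamma$ is Zariski-dense in $SL(2,\R)$ precisely when $\mathbf{H}=SL(2,\C)$. So I would argue by contradiction: assume $\mathbf{H}$ is a proper algebraic subgroup and deduce that $\Gamma$ is abelian. The first step is to recall that a proper algebraic subgroup of $SL(2,\C)$ is, up to conjugation, either finite, contained in the upper-triangular Borel subgroup $B$, or contained in the normalizer $N(T)$ of the diagonal torus $T$: the identity component $\mathbf{H}^{0}$ is a proper connected subgroup, hence solvable, hence lies in a Borel subgroup by the Borel fixed point theorem applied to its action on $\mathbb{P}^{1}$, so after conjugating $\mathbf{H}^{0}$ is one of $\{1\}$, the subgroup $U$ of unipotent upper-triangular matrices, the torus $T$, or $B$; and since $\mathbf{H}$ normalizes $\mathbf{H}^{0}$ while $N(U)=N(B)=B$, the trichotomy follows.

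Next I would rule out each case using only that every $\gamma\in\Gamma\setminus\{1\}$ is hyperbolic, hence has infinite order, is not unipotent, and satisfies $|\Tr\gamma|>2$. If $\mathbf{H}$ is finite, then $\Gamma$ is a torsion group, so $\Gamma=\{1\}$. If $\mathbf{H}$ is conjugate into $B$, then $[\Gamma,\Gamma]$ is conjugate into $[B,B]\subseteq U$, so every element of $[\Gamma,\Gamma]$ is unipotent; a unipotent element of $\Gamma$ must be $1$, so $[\Gamma,\Gamma]=\{1\}$. If $\mathbf{H}$ is conjugate into $N(T)=T\sqcup wT$ with $w=\left(\begin{smallmatrix}0&1\\-1&0\end{smallmatrix}\right)$, then every element of the coset $wT$ has trace $0$ — since $w\cdot\diag(a,a^{-1})=\left(\begin{smallmatrix}0&a^{-1}\\-a&0\end{smallmatrix}\right)$ — hence eigenvalues $\pm\sqrt{-1}$, and so is neither $1$ nor hyperbolic; as trace is conjugation-invariant, no element of $\Gamma$ can lie in (a conjugate of) $wT$, so $\Gamma$ is conjugate into the abelian torus $T$. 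In every case $\Gamma$ is abelian, contradicting the hypothesis, so $\mathbf{H}=SL(2,\C)$, i.e. $\Gamma$ is Zariski-dense.

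I do not expect a genuine obstacle here — this is presumably why the statement is left without proof in the text — but the one point that needs care is that the conjugations bringing $\mathbf{H}^{0}$ into standard form are a priori complex, so $\Gamma$ need not literally sit inside a real Borel subgroup or real torus normalizer; this is harmless because every property used to produce a contradiction (being unipotent, having a given trace, being abelian, having finite order) is invariant under conjugation in $SL(2,\C)$. If one prefers to avoid quoting the classification, the same argument can be phrased geometrically: two non-commuting hyperbolic elements of $\Gamma$ have disjoint fixed-point sets on $\mathbb{P}^{1}(\R)$, since a common fixed point would place them in a common Borel subgroup and force their commutator to be a nontrivial unipotent element of $\Gamma$; the Zariski closures of the two infinite cyclic groups they generate are then two distinct one-dimensional tori, and these generate all of $SL(2,\C)$, because no Borel subgroup and no torus normalizer contains two distinct maximal tori, and their product is infinite.
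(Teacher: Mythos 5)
Your proof is correct; note that the paper explicitly omits a proof of this lemma, so there is nothing to compare it against. The main line of your argument --- passing to the Zariski closure $\mathbf{H}$ in $SL(2,\C)$, invoking the trichotomy (finite, contained in a Borel, contained in a torus normalizer) obtained from the classification of connected proper algebraic subgroups, and eliminating each case using that nontrivial elements of $\Gamma$ have infinite order, are not unipotent, and have trace of absolute value greater than $2$ --- is complete, and you correctly note that all invariants used survive conjugation over $\C$.

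One small inaccuracy appears in the optional geometric variant at the end: it is not true that a Borel subgroup of $SL(2,\C)$ contains at most one maximal torus; the upper-triangular Borel contains the whole one-parameter family of $U$-conjugates of the diagonal torus, all distinct. What rescues that variant is the stronger fact you had already established, namely that the two tori arising as Zariski closures of $\langle g\rangle$ and $\langle h\rangle$ have disjoint fixed-point sets on $\mathbb{P}^1$, so they cannot lie in a common Borel subgroup (which would force a common fixed point); with that substitution the alternative argument also goes through. This does not affect the main proof, which stands as written.
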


As in the sense of \cite[Chapter IX, Section 7]{HelgasonDiffLieSymm2001}, 
an invertible matrix $g \in GL(n,\R)$ is called elliptic (resp. hyperbolic) if $g$ is diagonalizable over $\C$ with eigenvalues of norm $1$ (resp. with positive real eigenvalues). 
For a linear algebraic group $\mathbf{G}$ defined over $\R$, an element $g$ in $\mathbf{G}(\R)$ is called elliptic, hyperbolic, or unipotent if for some faithful $\R$-rational representation $\rho$ of $\mathbf{G}$, the matrix $\rho(g)$ is so, respectively. 
It should be noted that if $g \in \mathbf{G}(\R)$ is elliptic, hyperbolic, or unipotent, 
then for any $\R$-morphism $\phi\colon \mathbf{G}\rightarrow \mathbf{G}'$ of linear $\R$-algebraic groups,
the element $\phi(g)\in \mathbf{G}'(\R)$ is so, respectively.
An element $X$ in the real Lie algebra $\mathfrak{g}$ of $\mathbf{G}$ is called elliptic or hyperbolic if $\exp(tX)\in \mathbf{G}(\R)$ is so for any $t \in \R$, respectively, and is called nilpotent if $\exp(tX)\in \mathbf{G}(\R)$ is unipotent for any $t \in \R$.

Now we fix a linear algebraic group $\mathbf{G}$
    defined over $\R$, and denote by $\mathfrak{g}$ the real Lie algebra of the Lie group $\mathbf{G}(\R)$.
For subsets of $\mathfrak{g}$, 
let us introduce the following notation: 

\begin{definition}\label{definition:propertystar}
    We say that a finite subset $\mathcal{B}$
    of $\mathfrak{g}$
    satisfies Property~$(*)$ 
    if the three conditions below hold:
    \begin{enumerate}
        \item Any element $X$ in $\mathcal{B}$ is elliptic, hyperbolic, or nilpotent.
        \item For any elliptic $X\in\mathcal{B}$ 
        we have $\exp(X)=1$.
        \item The real vector space $\mathfrak{g}$ 
        is spanned by $\mathcal{B}$.
    \end{enumerate}
\end{definition}

\begin{lemma}
\label{lemma:Zariski-dense-basis-of-AG-construction}
    The Lie algebra $\mathfrak{g}$ admits a real basis $\mathcal{B}$ with Property~$(*)$ in Definition~\ref{definition:propertystar}.
\end{lemma}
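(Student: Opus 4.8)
The plan is to build the basis $\mathcal{B}$ by decomposing $\mathfrak{g}$ into pieces that are amenable to producing elliptic, hyperbolic, or nilpotent spanning vectors, and then correcting the elliptic part so that condition (ii) of Property~$(*)$ holds. First I would reduce to the semisimple case: write $\mathfrak{g} = \mathfrak{z}(\mathfrak{g}) \oplus \mathfrak{g}_{\mathrm{ss}}$ where $\mathfrak{g}_{\mathrm{ss}} = [\mathfrak{g},\mathfrak{g}]$, and note that the center of a reductive $\R$-algebraic group decomposes further as a sum of a split torus part (whose Lie algebra has a basis of hyperbolic elements) and a compact torus part (whose Lie algebra has a basis of elliptic elements). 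For the compact torus directions one must replace each elliptic basis vector $X$ by a rescaled vector $cX$ with $\exp(cX) = 1$; this is possible because $\exp$ restricted to the Lie algebra of a compact torus is periodic in each coordinate, so some nonzero scalar multiple of $X$ maps to the identity, and rescaling a basis vector keeps it a basis vector and keeps it elliptic.

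Next I would handle $\mathfrak{g}_{\mathrm{ss}}$, which is semisimple. Here the standard tool is the root-space decomposition relative to a Cartan subalgebra, or more robustly the fact that every semisimple real Lie algebra is spanned by its nilpotent elements: each root vector $E_\alpha$ in a complexified root space contributes, after taking real and imaginary parts appropriately, nilpotent elements of $\mathfrak{g}$, and these together with the Cartan part span. A cleaner route: take a maximally split Cartan subalgebra $\mathfrak{j} = \mathfrak{t} \oplus \mathfrak{a}$ with $\mathfrak{a} \subset \mathfrak{p}$, $\mathfrak{t} \subset \mathfrak{k}$; the $\mathfrak{a}$-part is spanned by hyperbolic elements, the $\mathfrak{t}$-part by elliptic elements (which again must be rescaled to satisfy (ii), using that $\mathfrak{t}$ exponentiates into a compact torus), and the restricted root spaces $\mathfrak{g}_\alpha$ (for $\alpha \neq 0$) consist entirely of nilpotent elements, so any real basis of each $\mathfrak{g}_\alpha$ works. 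Taking the union over all these pieces gives a spanning set, and by discarding redundant vectors we get a basis with Property~$(*)$.

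The main obstacle I expect is the elliptic rescaling step, specifically making sure the rescaled elliptic vectors still form part of a \emph{basis} (not merely a spanning set) simultaneously with all the other constraints, and verifying condition (ii) uniformly: one needs that the chosen compact torus is genuinely compact (so $\exp$ is periodic), which requires choosing $\mathfrak{t}$ inside $\mathfrak{k}$ and using that $\exp(\mathfrak{t})$ lies in the compact group $K$ whose maximal torus is compact. A subtlety is that $\mathbf{G}$ need only be reductive, not semisimple, so the central split torus part genuinely occurs and must be treated with hyperbolic generators; the hypothesis that $G$ is a real reductive group (open subgroup of $\mathbf{G}(\R)$ for connected reductive $\mathbf{G}$) is exactly what guarantees this decomposition is available. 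Once the three types of vectors are assembled and the elliptic ones rescaled to have trivial exponential, extracting a basis from the spanning set is routine linear algebra, and each of the three conditions of Definition~\ref{definition:propertystar} holds by construction.
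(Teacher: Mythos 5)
Your proposal does not establish the lemma in the generality the paper needs, and its central spanning argument has a genuine gap. On scope: in the paper the lemma is stated for the Lie algebra of an \emph{arbitrary} linear algebraic group $\mathbf{G}$ defined over $\R$ --- it is later applied to centralizers $Z_{G'}(\rho(SL(2,\R)))$, where $G'$ is any Zariski-connected subgroup between $\rho(SL(2,\R))$ and $G^{\rho}_{\even}$ and hence need not be reductive. The paper therefore first reduces, via Mostow's Levi decomposition, to the unipotent case (trivial, since every element is nilpotent) and the reductive case. Your argument starts from $\mathfrak{g}=\mathfrak{z}(\mathfrak{g})\oplus\mathfrak{g}_{\mathrm{ss}}$ and even asserts that reductivity of $G$ is what makes the proof possible, so the non-reductive case is simply not addressed.

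More seriously, your spanning sets for the semisimple part do not span. The set built from a maximally split Cartan $\mathfrak{t}\oplus\mathfrak{a}$ together with the restricted root spaces $\mathfrak{g}_\alpha$ ($\alpha\neq 0$) omits the non-toral part of $\mathfrak{m}=\mathfrak{z}_{\mathfrak{k}}(\mathfrak{a})$; for a compact factor such as $\mathfrak{su}(2)$ it yields only a one-dimensional Cartan subalgebra. The alternative claim that a real semisimple Lie algebra is spanned by nilpotent elements is false for compact (more generally, non-quasi-split) forms: $\mathfrak{su}(2)$ has no nonzero nilpotent elements, and real and imaginary parts of root vectors for compact imaginary roots are elliptic, not nilpotent. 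The elliptic rescaling step is also incorrect as stated: for a generic $X$ in the Lie algebra of a compact torus the line $\R X$ meets $\ker(\exp)$ only at $0$ (irrational winding), so no nonzero multiple of $X$ exponentiates to the identity; one must instead \emph{choose} the vectors inside the lattice, i.e.\ a $\Z$-basis $\{Y_1,\dots,Y_t\}$ of $\ker(\exp\colon\mathfrak{t}\to T)$. The paper's proof handles all of $\mathfrak{k}$ at once with a trick you are missing: since $K_{0}=\bigcup_{k\in K_{0}}kTk^{-1}$, the elements $\Ad(k)Y_i$ are elliptic, satisfy $\exp(\Ad(k)Y_i)=k\exp(Y_i)k^{-1}=1$, and span $\mathfrak{k}$, while every element of $\mathfrak{p}$ is hyperbolic, so an arbitrary basis of $\mathfrak{p}$ completes the construction; no root-space decomposition is needed.
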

\begin{proof}
    Without loss of generality, we may assume that $\mathbf{G}$ is connected.
    Then by using the Levi decomposition of $\mathbf{G}$ (Mostow's theorem \cite{Levidecomposition}), 
    it suffices to prove the claim in the two cases where 
    $\mathbf{G}$ is unipotent and where $\mathbf{G}$ is reductive.

    If $\mathbf{G}$ is unipotent, any element $X$ in $\mathfrak{g}$ is nilpotent. 
    Thus any basis of $\mathfrak{g}$ satisfies the desired conditions.
    
    If $\mathbf{G}$ is reductive, then 
    we fix a Cartan involution $\theta$ of $\mathbf{G}(\R)$ by Mostow's theorem \cite[Theorem 7.3]{Mostow_self-adjoint}.
    Let $\mathfrak{g} = \mathfrak{k} + \mathfrak{p}$ be the corresponding Cartan decomposition of $\mathfrak{g}$.
    Namely, $\mathfrak{k}$ (resp.\ $\mathfrak{p}$) is the eigenspace of the differential of $\theta$
    with eigenvalue $+1$ (resp.\ $-1$). Then $K = \{g\in \mathbf{G}(\R)\mid \theta(g) = g\}$ is a maximal compact
    subgroup of $\mathbf{G}(\R)$. We consider its Euclidean identity component $K_{0}$.

    Let $T$ be a maximal torus of $K_{0}$ and we put $t=\dim T$. Take a $\Z$-basis $\{Y_{1},\ldots,Y_{t}\}$
    of $\mathrm{ker}(\mathrm{exp}\colon \mathfrak{t} \rightarrow T)$.
    Then thanks to the Cartan decomposition
    $\displaystyle K_{0}=\bigcup_{k\in K_{0}}kTk^{-1}$, 
    one can take a basis $\mathcal{B}_{\mathfrak{k}}$ of $\mathfrak{k}$
    contained in $\{\mathrm{Ad}(k)(Y_{i})
    \mid k \in K_{0},\ 1\leq i \leq t\}$.
    We also fix a basis $\mathcal{B}_{\mathfrak{p}}$ of $\mathfrak{p}$.
    Then $\mathcal{B}:=\mathcal{B}_{\mathfrak{k}}\cup\mathcal{B}_{\mathfrak{p}}$ satisfies the desired properties for reductive $\mathbf{G}$.
    This proves our claim.
\end{proof}

\begin{lemma}
\label{lemma:Zariski-dense-basis-of-AG}
    Let $\mathcal{B}$ be a subset of $\mathfrak{g}$ with Property~$(*)$ in Definition \ref{definition:propertystar}.
    Fix an arbitrary $t\in\R\smallsetminus\Q$ and  
    let $L$ be a closed Lie subgroup of $\mathbf{G}(\R)$ which 
    has finitely many connected components and contains $\exp(tX)$ for all $X\in \mathcal{B}$. 
    Then the Lie algebra of $L$ coincides with $\mathfrak{g}$
    in the following two cases:
    \begin{description}
        \item[Case 1] $\mathbf{G}$ is commutative. 
        \item[Case 2] $L$ is an algebraic subgroup of $\mathbf{G}(\R)$.
    \end{description}
\end{lemma}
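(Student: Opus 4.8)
The plan is to handle the two cases separately, in both cases reducing to the observation that if $t$ is irrational and $\exp(tX)$ lies in $L$, then enough of the one-parameter subgroup $\{\exp(sX)\}_{s\in\R}$ is forced into $L$ that $X$ itself lies in the Lie algebra $\mathfrak{l}$ of $L$; summing over $X\in\mathcal{B}$ and using condition (iii) of Property~$(*)$ (that $\mathcal{B}$ spans $\mathfrak{g}$) then gives $\mathfrak{l}=\mathfrak{g}$. For \textbf{Case 1}, $\mathbf{G}$ commutative: write $X\in\mathcal{B}$. If $X$ is hyperbolic or nilpotent, then $\overline{\{\exp(tX)^{n}\mid n\in\Z\}}^{\mathrm{fcc}}$ is the full one-parameter group $\{\exp(sX)\}_{s\in\R}$ (a closed subgroup of a commutative Lie group containing a hyperbolic or unipotent element of infinite order contains the whole ray, since the closure of $\{nt\bmod\text{(period lattice)}\}$ cannot be finite — there is no nontrivial compact direction), so $X\in\mathfrak{l}$. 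If $X$ is elliptic, then condition (ii) gives $\exp(X)=1$, so $\exp(tX)$ generates a subgroup of the circle $\{\exp(sX)\mid s\in[0,1]\}$ that is dense because $t$ is irrational; hence again the whole circle, and in particular $X$, lies in $\overline{L}$ and thus $\mathfrak{l}\ni X$. Taking the span over all $X\in\mathcal{B}$ yields $\mathfrak{l}=\mathfrak{g}$.

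For \textbf{Case 2}, $L$ algebraic: here we do not need $L$ to have finitely many components in a delicate way — being algebraic already forces $L$ to contain, for each $X\in\mathcal{B}$, the Zariski closure of the cyclic group $\langle\exp(tX)\rangle$. The point is to identify that Zariski closure. If $X$ is hyperbolic or nilpotent, the element $\exp(tX)$ is respectively hyperbolic or unipotent of infinite order, and the Zariski closure of $\langle\exp(tX)\rangle$ is the one-dimensional algebraic group $\{\exp(sX)\mid s\in\R\}$ (a torus $\mathbf{G}_m$ in the hyperbolic case, $\mathbf{G}_a$ in the nilpotent case), so $X\in\mathfrak{l}$. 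If $X$ is elliptic with $\exp(X)=1$, then $\{\exp(sX)\mid s\in\R\}$ is a one-dimensional torus $\cong\mathrm{SO}(2)$ whose $\R$-points form a circle, and $\exp(tX)$ with $t\notin\Q$ is an element of infinite order in this circle; its Zariski closure (over $\C$, intersected back with $G$) is again the full one-dimensional torus, since a proper algebraic subgroup of a one-dimensional torus is finite. In all three cases $X$ lies in the Lie algebra of the Zariski closure of $\langle\exp(tX)\rangle$, hence in $\mathfrak{l}$; spanning over $\mathcal{B}$ finishes the proof.

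I expect the main technical point — and the place one must be slightly careful — to be the elliptic case: one must use both that $t\notin\Q$ \emph{and} the normalization $\exp(X)=1$ from Property~$(*)$~(ii), since without the latter the one-parameter subgroup generated by $X$ need not be a circle of ``period $1$'' and $\exp(tX)$ could even have finite order. The hyperbolic and nilpotent cases are comparatively routine, amounting to the standard fact that an infinite cyclic subgroup of $\mathbf{G}_a(\R)$ or $\mathbf{G}_m(\R)^{\circ}$ is Zariski-dense (and, for Case~1, that its \emph{Euclidean} closure is the whole line since these groups have no nontrivial compact quotient). Finally, the reduction ``$X\in\mathfrak{l}$ for all $X\in\mathcal{B}$, hence $\mathfrak{l}=\mathfrak{g}$'' is immediate from Property~$(*)$~(iii) and the fact that $\mathfrak{l}$ is a linear subspace.
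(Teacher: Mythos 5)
Your Case~1 argument has a genuine gap at the hyperbolic/nilpotent step. You claim that for a single hyperbolic (or nilpotent) $X\in\mathcal{B}$, any closed subgroup $L$ with finitely many components containing $\langle\exp(tX)\rangle$ must contain the whole one-parameter group $\exp(\R X)$, ``since there is no nontrivial compact direction.'' But $\mathbf{G}$ is only assumed commutative, and it may very well have compact directions; that is exactly why the elliptic elements and the normalization $\exp(X)=1$ appear in Property~$(*)$. Concretely, take $\mathbf{G}=\mathbf{G}_m\times \mathbf{SO}(2)$, so that the identity component of $\mathbf{G}(\R)$ is $\R_{>0}\times S^{1}$, let $X$ be the hyperbolic generator of the first factor, and set
\[
L:=\{(e^{s},\,e^{2\pi\sqrt{-1}\,s/t})\mid s\in\R\}.
\]
This is a closed \emph{connected} Lie subgroup containing $\exp(ntX)=(e^{nt},1)$ for every $n\in\Z$, yet $X\notin\Lie(L)$: the one-parameter subgroup of $L$ through the powers of $\exp(tX)$ is ``slanted'' into the compact torus. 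So you cannot conclude $X\in\mathfrak{l}$ element by element, and the sentence justifying it is false as stated. (Of course this $L$ does not contain $\exp(tY)$ for the elliptic basis elements $Y$, so the lemma itself is safe, but your proof of it is not.) The paper's proof avoids this by a two-step argument: first it uses \emph{all} the elliptic elements of $\mathcal{B}$, together with $\exp(Y)=1$ and $t\notin\Q$, to show that the maximal compact subgroup $K$ of the identity component is contained in $L$; only then does it pass to the vector group $G/K$, where a closed connected subgroup is a linear subspace, and the spanning condition (iii) applied to the images of the $\exp(tX)$ finishes the proof. Any repair of your argument will have to follow this order (elliptic elements first, then quotient by $K$); note also that the reduction from finitely many components to connected $L$ (the paper replaces $\mathcal{B}$ by $n\mathcal{B}$) needs to be said explicitly.

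Your Case~2 follows essentially the paper's route (reduce to Zariski density of $\exp(t\Z X)$ in a one-parameter group, treated according to the type of $X$) and its conclusion is correct, but the justification in the hyperbolic case is imprecise: for hyperbolic $X$ the set $\{\exp(sX)\mid s\in\R\}$ need not be an algebraic subgroup, and the Zariski closure of $\langle\exp(tX)\rangle$ need not be one-dimensional (e.g.\ $\diag(e^{t},e^{\sqrt{2}t})$ has Zariski-dense powers in a two-dimensional torus). What is true, and all that is needed, is that this Zariski closure \emph{contains} $\exp(\R X)$, which is precisely Chevalley's lemma on symmetric matrices cited in the paper; with that substitution, and with the elliptic case handled via Euclidean density (as you do) and the nilpotent case via polynomiality of $s\mapsto\exp(sX)$, your Case~2 is fine.
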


\begin{proof}
    \noindent (Proof in Case 1)
    Since $L$ has finitely many connected components, 
    one can find a positive integer $n$ such that $\exp(nt\mathcal{B})$ is contained in the Euclidean identity component of $L$.
    Therefore, we may assume, by replacing $\mathcal{B}$ with $n\mathcal{B}$, that the Lie group $L$ is connected. Then $L$ is contained in the Euclidean identity component $G$ of $\mathbf{G}(\R)$.

    Let $K$ be the unique maximal compact Lie subgroup of the connected abelian Lie group $G$, which coincides with the set of elliptic elements of $G$. Then the Lie algebra of $K$ is spanned by the elliptic elements 
    in $\mathcal{B}$. For elliptic $X\in \mathcal{B}$
    it follows from $\exp(X)=1$ and $t\not\in \Q$ 
    that $\exp(\Z tX)(\subset L)$ is dense in $\exp(\R X)$ with respect to the Euclidean topology.
    Hence the closed subgroup $L$ contains $K$. Here we notice that the Lie group $G/K$ is isomorphic to a vector space and that the vector space $G/K$ is spanned by the images of 
    $\exp(tX)\in L$ for all $X\in \mathcal{B}$.
    Since the Lie group $L/K$ is closed in $G/K$ and connected, 
    we have $L/K = G/K$. Hence we obtain $L=G$, which proves our claim.
    \vspace{\baselineskip}

    \noindent (Proof in Case 2) 
    It suffices to show that $\exp(\Z tX)$ is Zariski-dense in $\exp(\R X)$ for all $X\in \mathcal{B}$. 
    In the case where $X$ is elliptic, the Euclidean density (and thus the Zariski density) follows as seen above. If $X$ is hyperbolic, 
    the Zariski density follows from the following Chevalley's lemma 
    (e.g., see \cite[Lemma 2.A.1.2]{Wallach-real-reductive-group}):
    for a symmetric matrix $X\in M(N,\R)$, if a polynomial
    of matrix entries vanishes on $\mathrm{exp}(\Z X)$, 
    then it also does so on $\mathrm{exp}(\R X)$.
    Finally, if $X$ is nilpotent, the Zariski density is implied by 
    the fact that the map $t\mapsto \exp(tX)$ from $\R$ to $G$ is a polynomial map.
    \end{proof}

%

\subsection{Proof of Theorem \ref{theorem:deformation-zariski-closure}}

In this subsection, we give a proof of Theorem \ref{theorem:deformation-zariski-closure}.
We follow the setting and notation of Theorem \ref{theorem:deformation-zariski-closure}.

We need a few notations. Put 
\[
\Lambda :=\{(i,j)\in \N\times \N \mid [\mathfrak{g}':V_{2i+1}]>0,\ 1\leq j\leq 
[\mathfrak{g}':V_{2i+1}]\}.
\]
Applying Lemma \ref{lemma:Zariski-dense-basis-of-AG-construction} to the centralizer $Z:=Z_{G'}(\rho(SL(2,\R)))$, 
we take a basis $\{X_{0,1},\ldots, X_{0,[\mathfrak{g}':V_{1}]}\}$ of its real Lie algebra $\mathfrak{z}$
satisfying Property~$(*)$ in Definition~\ref{definition:propertystar}.
Put $V_{0,j} := \R X_{0,j}$. Furthermore, thanks to Lemma \ref{lemma:rho-even},
for $(i,j)\in\Lambda$ with $i\neq 0$ we can choose an $SL(2,\R)$-submodule $V_{i,j}$ of $\mathfrak{g}'$ 
isomorphic to $V_{2i+1}$ such that 
\begin{align}
\label{eq:decomposition-g-even}
\mathfrak{g}'=\bigoplus_{(i,j)\in \Lambda} V_{i,j}
(\simeq \bigoplus_{i\in\N} V_{2i+1}^{[\mathfrak{g}':V_{2i+1}]}).
\end{align}

For a discrete subgroup $\Gamma_{g}$ of $SL(2,\mathbb{R})$ isomorphic to $\pi_{1}(\Sigma_{g})$ of genus $g \geq 2$ with 
\begin{align}
g \geq \sum_{i\in\mathbb{N}}  [\mathfrak{g}':V_{2i+1}],
\end{align}
let us construct a continuous family $\rho_{t}\in \Hom(\Gamma_{g},G)$ with $\rho_{0}=\rho|_{\Gamma_{g}}$.
We shall fix 
an injective map 
$f\colon \Lambda \rightarrow 
\{1,2,\ldots,g\}$
and 
a generating set $\{ a_{1},b_{1},\ldots,a_{g},b_{g} \}$ 
of $\Gamma_g$ as in the previous subsection.
In order to apply Lemma \ref{lemma:deform-of-surface-group} (\ref{lemma:deform-of-surface-group-def})
to our setting, for any $(i,j)\in\Lambda$ with $i\neq0$
we can and do 
take a nonzero element $X_{i,j}$ of 
$V_{i,j}(\subset \mathfrak{g})$ satisfying
\begin{align}
\label{eq:Xij-condition}
\mathrm{Ad}(\rho(a_{f(i,j)}))X_{i,j}=X_{i,j}.  
\end{align}
Indeed, recall that $a_{f(i,j)}$ 
is a hyperbolic element of $SL(2,\mathbb{R})$ and that the $\mathfrak{sl}(2,\mathbb{R})$-module
$V_{i,j}$ is of highest weight $2i$ and thus has a vector of weight $0$. Hence there exists a unique
non-zero $X_{i,j}\in V_{i,j}$ satisfying (\ref{eq:Xij-condition}) up to an $\R$-scalar.
Then by applying Lemma \ref{lemma:deform-of-surface-group} (\ref{lemma:deform-of-surface-group-def}) to $\{   X_k \}_{k = 1,\dots,g}$ defined by 
\[
X_{k}:=
\begin{cases}
X_{i,j}  & (\text{if }k = f(i,j) \text{ for } (i,j) \in \Lambda), \\
0 & (\text{otherwise}),
\end{cases}
\]
we obtain the continuous deformation 
$\rho_{t} \colon \Gamma_{g}\rightarrow G$ 
of $\rho|_{\Gamma_g}$
defined by (\ref{def:rho-t}) for $t\in \R$.

To prove Theorem \ref{theorem:deformation-zariski-closure}, 
it suffices to show the following:
\begin{lemma}
    \label{lemma:deformation-zariski-closure}
    In the setting of Theorem \ref{theorem:deformation-zariski-closure},
    let $\rho_{t}$ be as above and fix a sufficiently small positive irrational number $t$. Then the following claims hold:
    \begin{enumerate}
        \item The Zariski closure of $\rho_{t}(\Gamma_{g})$ coincides with $G'$.
        \item Assume that $\mathfrak{g}'$ contains the centralizer $\mathfrak{z}_{\mathfrak{g}}(\rho(\mathfrak{sl}(2,\R)))$.
        Then $\overline{\rho_{t}(\Gamma_{g})}^{\mathrm{fcc}}$ (see Definition \ref{definition:fcc-closure}) contains the Euclidean identity component $G'_{0}$ of $G'$.
    \end{enumerate}    
\end{lemma}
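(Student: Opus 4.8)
The plan is to deduce both claims from the structural input we have assembled: the $SL(2,\R)$-decomposition \eqref{eq:decomposition-g-even} of $\mathfrak{g}'$, the Property~$(*)$ basis of the centralizer $\mathfrak{z}$, and the two abstract lemmas (Lemmas~\ref{lemma:deform-of-surface-group}, \ref{lemma:Zariski-dense-basis-of-AG}) together with Chevalley's Fact~\ref{fact:chevalley}.

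\textbf{Part (i).} First I would fix a small irrational $t>0$ and set $L := \overline{\rho_t(\Gamma_g)}^{\Zariski}$, a Zariski-closed (hence algebraic, with finitely many components) subgroup of $G$. One inclusion is easy: since $\rho_t(a_k)=\rho(a_k)\in\rho(SL(2,\R))\subset G'$ and each $\rho_t(b_k)=\rho(b_k)e^{tX_k}$ with $\rho(b_k)\in G'$ and $X_k\in\mathfrak{g}'$, all generators of $\rho_t(\Gamma_g)$ lie in $G'$; as $G'$ is Zariski-closed, $L\subset G'$. For the reverse inclusion I would show the Lie algebra $\mathfrak{l}$ of $L$ equals $\mathfrak{g}'$, which suffices since $G'$ is Zariski-connected. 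By Lemma~\ref{lemma:deform-of-surface-group}\,(\ref{lemma:deform-of-surface-group-property}) (applied with the linear algebraic group $G'$ in place of $G$, which is legitimate since $\rho(SL(2,\R))\subset G'$), the group $L=\overline{\rho_t(\Gamma_g)}^{\Zariski}$ contains $\rho(SL(2,\R))$ and every $e^{tX_{i,j}}$; in particular $\mathfrak{l}\supset\rho(\mathfrak{sl}(2,\R))$, so $\mathfrak{l}$ is an $SL(2,\R)$-submodule of $\mathfrak{g}'$ under $\Ad\circ\rho$. For the pieces $V_{i,j}$ with $i\neq 0$: since $e^{tX_{i,j}}\in L$ and $X_{i,j}\in V_{i,j}$ is a weight-zero vector generating the $\mathfrak{sl}(2,\R)$-module $V_{i,j}$ (highest weight $2i$), the submodule $\mathfrak{l}$ contains the whole irreducible $V_{i,j}$ once it contains $X_{i,j}$ — and $X_{i,j}\in\mathfrak{l}$ because $\exp(\Z t X_{i,j})$ is Zariski-dense in $\exp(\R X_{i,j})$ (as $X_{i,j}$ is nilpotent or hyperbolic, cf.\ the Case~2 argument in Lemma~\ref{lemma:Zariski-dense-basis-of-AG}; alternatively differentiate). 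For the centralizer part $\bigoplus_j V_{0,j}=\mathfrak{z}$: apply Lemma~\ref{lemma:Zariski-dense-basis-of-AG} (Case~2) to the algebraic group $L\cap \overline{Z}^{\Zariski}$ and the Property~$(*)$ basis $\{X_{0,j}\}$ of $\mathfrak{z}$ — note $\mathfrak{z}$ is the Lie algebra of the \emph{reductive} group $Z=Z_{G'}(\rho(SL(2,\R)))$, so Property~$(*)$ makes sense — to conclude $\mathfrak{z}\subset\mathfrak{l}$. Combining, $\mathfrak{l}\supset\bigoplus_{(i,j)\in\Lambda}V_{i,j}=\mathfrak{g}'$, so $\mathfrak{l}=\mathfrak{g}'$ and $L=G'$.

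\textbf{Part (ii).} Now assume $\mathfrak{g}'\supset\mathfrak{z}_{\mathfrak{g}}(\rho(\mathfrak{sl}(2,\R)))$, so $Z$ is already the full centralizer of $\rho(SL(2,\R))$ in $G'$ up to components. Set $L := \overline{\rho_t(\Gamma_g)}^{\mathrm{fcc}}$, with Lie algebra $\mathfrak{l}$. By Lemma~\ref{lemma:deform-of-surface-group}\,(\ref{lemma:deform-of-surface-group-property}), $L\supset\rho(SL(2,\R))$ and $L\ni e^{tX_{i,j}}$ for all $(i,j)\in\Lambda$; hence, exactly as above, $\mathfrak{l}\supset\rho(\mathfrak{sl}(2,\R))$ and $\mathfrak{l}$ is an $SL(2,\R)$-submodule of $\mathfrak{g}'$ containing each $X_{i,j}$, so $\mathfrak{l}\supset\bigoplus_{i\neq 0}V_{i,j}$. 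It remains to get $\mathfrak{z}\subset\mathfrak{l}$ in this not-necessarily-algebraic setting, and this is where Part~(ii) genuinely differs from Part~(i). The group $Z$ is reductive, hence $Z_0$ is a connected reductive Lie group; its abelianized part is a vector group times a compact torus, and the Property~$(*)$ basis $\{X_{0,j}\}$ was constructed precisely so that on the compact (elliptic) directions $\exp(X_{0,j})=1$. I would consider $M := L\cap Z$, a closed Lie subgroup of the reductive group $Z$ with finitely many connected components containing all $e^{tX_{0,j}}$; to apply Lemma~\ref{lemma:Zariski-dense-basis-of-AG} (Case~1) I need commutativity, so I first reduce to $Z/[Z_0,Z_0]$ (the derived subgroup $[\mathfrak{z},\mathfrak{z}]$ already lies in $\mathfrak{l}$ by Chevalley's Fact~\ref{fact:chevalley} applied to $\overline{M}^{\Zariski}\supset\exp(tX_{0,j})$, which is Zariski-dense in the torus generated by the $X_{0,j}$ by Part~(i)'s density argument, giving $[\mathfrak{z},\mathfrak{z}]=[\overline{\mathfrak{m}}^{\Zariski},\overline{\mathfrak{m}}^{\Zariski}]=[\mathfrak{m},\mathfrak{m}]\subset\mathfrak{l}$). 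Then on the commutative quotient $\mathbf{G}:=Z/[Z_0,Z_0]$ Lemma~\ref{lemma:Zariski-dense-basis-of-AG} (Case~1) applies verbatim to the image basis and shows the image of $M$ has full Lie algebra, so $\mathfrak{z}\subset\mathfrak{l}$. Altogether $\mathfrak{l}=\mathfrak{g}'$, hence $L$ contains the connected subgroup with Lie algebra $\mathfrak{g}'$, i.e.\ $L\supset G'_0$.

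\textbf{Main obstacle.} The genuinely delicate point is Part~(ii): passing from "$L$ contains $\exp(tX)$ for all basis vectors $X$ of $\mathfrak{z}$" to "$\mathfrak{z}\subset\mathfrak{l}$" without the crutch of algebraicity. On the hyperbolic/nilpotent directions the one-parameter subgroup $\exp(\R X)$ is recovered from $\exp(\Z tX)$ only Zariski-topologically, not Euclidean-topologically, so $L$ need not literally contain $\exp(\R X)$; the fix is the two-step reduction above — handle $[\mathfrak{z},\mathfrak{z}]$ by Chevalley (which is Zariski in nature and so insensitive to the Euclidean subtlety) and handle the abelian quotient by Lemma~\ref{lemma:Zariski-dense-basis-of-AG} Case~1, where the Property~$(*)$ condition $\exp(X)=1$ for elliptic $X$ is exactly what forces Euclidean density of $\exp(\Z tX)$ in $\exp(\R X)$ for irrational $t$. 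Getting this bookkeeping right — in particular checking that the Property~$(*)$ basis of $\mathfrak{z}$ descends to a Property~$(*)$ (indeed $\exp=1$ on all elliptic directions) spanning set of the commutative quotient — is the crux.
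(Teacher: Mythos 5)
There is a genuine gap at the heart of your argument, in the treatment of the components $V_{i,j}$ with $i\neq 0$ (this step is used in both (i) and (ii)): you assert that $X_{i,j}\in\mathfrak{l}$ because $\exp(\Z tX_{i,j})$ is Zariski-dense in $\exp(\R X_{i,j})$, ``as $X_{i,j}$ is nilpotent or hyperbolic''. Nothing in the construction guarantees this Jordan type: $X_{i,j}$ is merely the $\Ad(\rho(a_{f(i,j)}))$-fixed vector of $V_{i,j}$ singled out by \eqref{eq:Xij-condition}, and it may be elliptic or of mixed type, with no normalization of its periods --- Property~$(*)$ of Definition~\ref{definition:propertystar}, in particular the condition $\exp(X)=1$ for elliptic $X$, was arranged only for the basis $\{X_{0,j}\}$ of $\mathfrak{z}$, not for the $X_{i,j}$ with $i\neq 0$. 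For a concrete failure take $G=SL(2,\C)$ viewed as a real reductive group and $\rho$ the inclusion of $SL(2,\R)$: the copy $\sqrt{-1}\,\mathfrak{sl}(2,\R)\simeq V_{3}$ has as its $\Ad(a_{f})$-fixed vector $\sqrt{-1}$ times a hyperbolic element, which is elliptic with $\exp(X_{i,j})\neq 1$; for arbitrarily small irrational $t$ whose ratio to the period of $\exp(\R X_{i,j})$ is rational, $\exp(\Z tX_{i,j})$ is a finite group, so its Zariski closure does not contain $\exp(\R X_{i,j})$ and the claimed conclusion $X_{i,j}\in\mathfrak{r}_{t}$ does not follow. (``Alternatively differentiate'' is also unavailable: only the single value $t$, not a one-parameter family, is known to land in $R_{t}$.) This is exactly where the paper argues differently: it uses $e^{tX_{i,j}}\in R_{t}$ only through the adjoint action, forming $Z_{i,j}(t)=\tfrac{1}{t}(\Ad(e^{tX_{i,j}})Y_{i,j}-Y_{i,j})\in\mathfrak{r}_{t}$ with $Y_{i,j}\in\rho(\mathfrak{sl}(2,\R))$, and then proves the estimates \eqref{ineq:ij-ij} and \eqref{ineq:ik-ij} --- valid only for sufficiently small $t$ --- to obtain the multiplicity bound \eqref{eq:dim-hom} and hence $\bigoplus_{j}V_{i,j}\subset\mathfrak{r}_{t}$ by complete reducibility. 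Your proof never invokes the smallness of $t$, which is a hypothesis of the lemma and is needed precisely at this point; as written, the $i\neq 0$ part of (i), and with it the corresponding step of (ii), is unproved.

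There are also secondary, in principle repairable, issues in (ii): you work directly with $L=\overline{\rho_{t}(\Gamma_{g})}^{\mathrm{fcc}}$, but as remarked after Definition~\ref{definition:fcc-closure} this group may have infinitely many connected components, so Lemma~\ref{lemma:Zariski-dense-basis-of-AG} cannot be applied to $L$ or to $M=L\cap Z$ without further justification; the clean route (the paper's) is to fix an arbitrary closed Lie subgroup $L_{t}\supset\rho_{t}(\Gamma_{g})$ with finitely many components and to show $\mathfrak{g}'\subset\mathfrak{l}_{t}$. Likewise, your application of Case~1 to $Z/[Z_{0},Z_{0}]$ needs the image of $M$ there to be closed with finitely many components and the image of the basis to retain Property~$(*)$; the paper secures the analogous points by passing to the abelianization of $\overline{\mathbf{L}_{t}}^{\Zariski}$ and using Fact~\ref{fact:chevalley} twice, first to see that the image of $L_{t}$ is closed with finitely many components via $[\mathfrak{l}_{t},\mathfrak{l}_{t}]=[\overline{\mathfrak{l}_{t}}^{\Zariski},\overline{\mathfrak{l}_{t}}^{\Zariski}]$, and then to conclude $\mathfrak{l}_{t}=\overline{\mathfrak{l}_{t}}^{\Zariski}$. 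Your overall architecture for the centralizer part (Chevalley for the derived algebra, Case~1 for the abelian quotient) is in the paper's spirit, but the lemma stands or falls with the $i\neq 0$ step above.
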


\begin{proof}
Fix any $t \in \R$.
Let us first prove $\overline{\rho_{t}(\Gamma_{g})}^{\Zariski} \subset G'$.
From the decomposition (\ref{eq:decomposition-g-even}),
we see $X_{i,j}\in \mathfrak{g}'$. 
Moreover, recall $\rho(SL(2,\R))\subset G'$ by the assumption (\ref{assumption:SL-G'-Grho}).
Therefore from the construction of $\rho_{t}$ we have $\rho_{t}(\Gamma_{g})\subset G'$ which implies 
$\overline{\rho_{t}(\Gamma_{g})}^{\Zariski} \subset G'$.

Let $R_{t}$ be the Zariski closure $\overline{\rho_{t}(\Gamma_{g})}^{\Zariski}$ of $\rho_{t}(\Gamma_{g})$ in $G$, and we denote by $\mathfrak{r}_{t}$ its real Lie algebra. For a while, our goal is to show $\mathfrak{r}_{t}\supset \mathfrak{g}'$ 
if $t$ is positive irrational and sufficiently small. 
Once this inclusion is established, we can conclude the assertion (i) 
since $G'$ is Zariski-connected.

Now we discuss how small $t>0$ should be for our goal. 
Let $(i,j)\in\Lambda$ with $i\neq 0$.
Since $V_{i,j}(\simeq V_{2i+1})$ is a non-trivial irreducible $\mathfrak{sl}(2,\mathbb{R})$-module, 
we can take $Y_{i,j}\in\rho(\mathfrak{sl}(2,\mathbb{R}))$ with $[X_{i,j},Y_{i,j}]\neq 0$.
We put 
\[
Z_{i,j}(t):=\frac{1}{t}(\mathrm{Ad}(e^{tX_{i,j}})Y_{i,j}-Y_{i,j}).
\]
Then we see $Z_{i,j}(t)\in\mathfrak{r}_{t}$ from the following two properties obtained by applying Lemma \ref{lemma:deform-of-surface-group} (\ref{lemma:deform-of-surface-group-property}) to our construction:
\begin{align}
    \rho(SL(2,\mathbb{R})) \subset R_{t},
    \label{eq:sl2-Lt}\\
    e^{tX_{i,j}} \in R_{t}\text{ for any }(i,j)\in\Lambda.
    \label{eq:Xij-Lt}
\end{align}

Again let $(i,j)\in\Lambda$ with $i\neq 0$.
Denote by
$p_{i,j}\colon\mathfrak{g}'\rightarrow V_{i,j}$
the projection with respect to the decomposition (\ref{eq:decomposition-g-even}).
We take a norm $|\cdot|_{i}$ on the real vector space $V_{2i+1}$ 
for each $i\in\mathbb{N}$ and
fix an isomorphism $q_{i,j}\colon V_{i,j}\simeq V_{2i+1}$
of $SL(2,\mathbb{R})$-modules.
Since $Z_{i,j}(t)=[X_{i,j},Y_{i,j}]+o(1)$ as $t\to0$
and 
$[X_{i,j},Y_{i,j}]\in V_{i,j}$, 
we can and do take 
a sufficiently small $t>0$ 
so that for any $k\neq j$ we have
\begin{align}
    |q_{i,j}(p_{i,j}(Z_{i,j}(t)))|_{i}&>
    \left(1-\frac{1}{[\mathfrak{g}':V_{2i+1}]}\right)
    |q_{i,j}([X_{i,j},Y_{i,j}])|_{i}, \label{ineq:ij-ij}\\
    |q_{i,k}(p_{i,k}(Z_{i,j}(t)))|_{i}&<
    \frac{1}{[\mathfrak{g}':V_{2i+1}]}
    |q_{i,j}([X_{i,j},Y_{i,j}])|_{i}.
    \label{ineq:ik-ij}
\end{align}

Assume that $t$ is positive irrational, and  satisfies the inequalities \eqref{ineq:ij-ij} and \eqref{ineq:ik-ij} for all $(i,j)\in\Lambda$ with $i\neq 0$. Then let us prove $\mathfrak{r}_{t}\supset \mathfrak{g}'$.
It suffices to show that for any $i\in \N$ with $[\mathfrak{g}':V_{2i+1}]>0$ we have
\begin{align}
\label{eq:lt-Vij}
\mathfrak{r}_{t}\supset \bigoplus_{j=1}^{[\mathfrak{g}':V_{2i+1}]} V_{i,j}.
\end{align}
Now we prove (\ref{eq:lt-Vij}) separately for $i=0$ and for $i>0$.
In the case $i=0$, we
    note that $\bigoplus_{j=1}^{[\mathfrak{g}':V_{1}]} V_{0,j}$ is the Lie algebra $\mathfrak{z}=\mathfrak{z}_{\mathfrak{g}'}(\rho(\mathfrak{sl}(2,\R)))$ of the centralizer $Z=Z_{G'}(\rho(SL(2,\R)))$. 
    Here recall that 
    the basis $\{ X_{0,j} \}_j$ of $\mathfrak{z}$ satisfies Property~$(*)$ in Definition~\ref{definition:propertystar}.
    Hence, by applying Lemma \ref{lemma:Zariski-dense-basis-of-AG} (Case 2) to the real  algebraic group $Z$, 
    we obtain $\mathfrak{r}_{t}\supset\mathfrak{z}$
    since the real algebraic subgroup $R_{t}\cap Z$ of $Z$ contains $\exp(tX_{0,j})$ for all $1\leq j\leq [\mathfrak{g}':V_{1}]$ as shown in \eqref{eq:Xij-Lt}. 
    This proves (\ref{eq:lt-Vij}) in the case $i=0$.
    
In the case $i>0$, we recall that
    $\rho(\mathfrak{sl}(2,\R))$ 
    is a subalgebra of the Lie algebra $\mathfrak{r}_{t}$
    as shown in (\ref{eq:sl2-Lt}). 
    Thus the Lie algebra $\mathfrak{r}_{t}$ is an $SL(2,\R)$-submodule of $\mathfrak{g}$ by the adjoint action $\Ad\circ\rho$.
    Since the submodule $\mathfrak{r}_{t}$ is completely reducible, in order to prove (\ref{eq:lt-Vij}) it suffices to show 
    \begin{align}
    \label{eq:dim-hom}
        \dim_{\R}\Hom_{SL(2,\R)}(\mathfrak{r}_{t}, V_{2i+1})
        \geq [\mathfrak{g}':V_{2i+1}].
    \end{align}
    For this purpose, let us prove that the elements
    $q_{i,j}\circ p_{i,j}|_{\mathfrak{r}_{t}}$ 
    ($j=1,\ldots,[\mathfrak{g}':V_{2i+1}]$) of 
    $\operatorname{Hom}_{SL(2,\mathbb{R})}
    (\mathfrak{r}_{t},V_{2i+1})$
    are linearly independent. 
    For a contradiction, suppose that
    there exists
    $(s_{1},\ldots,s_{[\mathfrak{g}':V_{2i+1}]})\in\mathbb{R}^{[\mathfrak{g}':V_{2i+1}]}\smallsetminus\{0\}$
    satisfying
    $\sum_{j=1}^{[\mathfrak{g}':V_{2i+1}]} s_{j}q_{i,j}\circ p_{i,j}|_{\mathfrak{r}_{t}}=0$.
    Without loss of generality, we assume 
    $|s_{1}|\geq |s_{j}|$ for any $j=1,\ldots,[\mathfrak{g}':V_{2i+1}]$.
    In particular, $s_{1}\neq 0$.
    Then, using the inequalities 
    (\ref{ineq:ij-ij}) and (\ref{ineq:ik-ij}),
    we have

    \begin{align*}
        1-\frac{1}{[\mathfrak{g}':V_{2i+1}]}
        <\frac{|q_{i,1}(p_{i,1}(Z_{i,1}(t)))|_{i}}{|q_{i,1}(
        [X_{i,1},Y_{i,1}])|_{i}}  &=\left|\sum_{j=2}^{[\mathfrak{g}':V_{2i+1}]}\frac{s_{j}q_{i,j}(p_{i,j}(Z_{i,1}(t)))}{s_{1}|q_{i,1}(
        [X_{i,1},Y_{i,1}])|_{i}}
        \right|_{i}  \\
        &\leq
        \sum_{j=2}^{[\mathfrak{g}':V_{2i+1}]}
        \frac{|s_{j}q_{i,j}(p_{i,j}(Z_{i,1}(t)))|_{i}}{|s_{1}q_{i,1}(
        [X_{i,1},Y_{i,1}])|_{i}} \\
        &\leq \sum_{j=2}^{[\mathfrak{g}':V_{2i+1}]}
        \frac{|q_{i,j}(p_{i,j}(Z_{i,1}(t)))|_{i}}{|q_{i,1}(
        [X_{i,1},Y_{i,1}])|_{i}}
        \\
        &<1-\frac{1}{[\mathfrak{g}':V_{2i+1}]}.
    \end{align*}
    This is a contradiction, and thus we get
    (\ref{eq:dim-hom}). Hence we also have (\ref{eq:lt-Vij}) in the case (2).
    Therefore we obtain $\mathfrak{r}_{t}\supset \mathfrak{g}'$,
    which implies the assertion (i). 

    Let us prove the assertion (ii). 
    For this purpose, we assume from now on that $\mathfrak{g}'$ contains the centralizer $\mathfrak{z}_{\mathfrak{g}}(\rho(\mathfrak{sl}(2,\R)))$. 
    Fix a positive irrational number $t$ so that the assertion (i) holds.
    Let $L_{t}$ be a closed Lie subgroup of $G$ which has finitely many connected components and contains $\rho_{t}(\Gamma_{g})$.
    We denote by  $\mathfrak{l}_{t}$ and $\overline{\mathfrak{l}_{t}}^{\Zariski}$ the real Lie algebra of $L_{t}$ and that of the Zariski closure $\overline{L_{t}}^{\Zariski}$, respectively. 
    Then the assertion (i) implies that $\overline{\mathfrak{l}_{t}}^{\Zariski}$ contains $\mathfrak{g}'$,
    and in particular, contains $\rho(\mathfrak{sl}(2,\R))$.
    Hence $[\overline{\mathfrak{l}_{t}}^{\Zariski},\overline{\mathfrak{l}_{t}}^{\Zariski}]$ contains all the non-trivial $SL(2,\R)$-submodules of $\overline{\mathfrak{l}_{t}}^{\Zariski}$. 
    Here notice that $\mathfrak{z}_{\mathfrak{g}}(\rho(\mathfrak{sl}(2,\R)))$ is spanned by all $X_{0,j}\in \mathfrak{g}'$ ($1\leq j \leq [\mathfrak{g}':V_{1}]$) since it is contained in $\mathfrak{g}'$.
    Hence the vector space 
    $\overline{\mathfrak{l}_{t}}^{\Zariski}/[\overline{\mathfrak{l}_{t}}^{\Zariski},\overline{\mathfrak{l}_{t}}^{\Zariski}]$ is also spanned by the images $[X_{0,j}]$ of all $X_{0,j} \in \overline{\mathfrak{l}_{t}}^{\Zariski}$.

    Now we consider the Zariski closure $\overline{\mathbf{L}_{t}}^{\Zariski}$ of $L_{t}$ in $\mathbf{G}$ and  
    apply Lemma~\ref{lemma:Zariski-dense-basis-of-AG} (Case~1) to the linear algebraic group  $\mathbf{S}:=\overline{\mathbf{L}_{t}}^{\Zariski}/[\overline{\mathbf{L}_{t}}^{\Zariski},\overline{\mathbf{L}_{t}}^{\Zariski}]$.
    Notice that the subset $\{[X_{0,1}],\ldots, [X_{0,[\mathfrak{g}':V_{1}]}]\}$ of $\overline{\mathfrak{l}_{t}}^{\Zariski}/[\overline{\mathfrak{l}_{t}}^{\Zariski},\overline{\mathfrak{l}_{t}}^{\Zariski}]$ satisfies Property~$(*)$ in Definition~\ref{definition:propertystar}.
    Moreover, each $\exp(tX_{0,j})$ belongs to $L_{t}$ by Lemma~\ref{lemma:deform-of-surface-group} (\ref{lemma:deform-of-surface-group-property}), and 
    the image of $L_{t}$ to $\mathbf{S}(\R)$ is a closed Lie subgroup of $\mathbf{S}(\R)$ with finitely many connected components
    since we have $[\mathfrak{l}_{t},\mathfrak{l}_{t}]=[\overline{\mathfrak{l}_{t}}^{\Zariski},\overline{\mathfrak{l}_{t}}^{\Zariski}]$ by Chevalley's results (Fact \ref{fact:chevalley}).
    Hence Lemma \ref{lemma:Zariski-dense-basis-of-AG} implies $\mathfrak{l}_{t} + [\overline{\mathfrak{l}_{t}}^{\Zariski},\overline{\mathfrak{l}_{t}}^{\Zariski}] = \overline{\mathfrak{l}_{t}}^{\Zariski}$.
    Again by Fact \ref{fact:chevalley}, we obtain 
    $\mathfrak{l}_{t} = \overline{\mathfrak{l}_{t}}^{\Zariski}$.
    This concludes that $\mathfrak{l}_{t}$ contains $\mathfrak{g}'$,
    which proves the assertion (ii). Thus the proof is complete. 
\end{proof}

\section{Non-standard small deformations of standard discontinuous surface groups}
\label{section:non-standard-deformation}

In this section, we give a proof of Theorem \ref{theorem:any_sl2_nonstandard}.
As an application of Theorem \ref{theorem:deformation-zariski-closure}, we study small deformations of standard discontinuous surface groups for a homogeneous space of reductive type.
We use the following theorem to prove Theorems \ref{theorem:any_sl2_nonstandard} and \ref{theorem:ZariskidenseSurface_on_symm}:  
\begin{theorem}\label{theorem:any_sl2_geven}
Let $G/H$ be a homogeneous space of reductive type,
$\rho\colon SL(2,\R) \rightarrow G$ a Lie group homomorphism
such that the $SL(2,\R)$-action on $G/H$ via $\rho$ is proper,
and $\Gamma_{g}$ a discrete subgroup of $SL(2,\mathbb{R})$ isomorphic to $\pi_{1}(\Sigma_{g})$ with genus $g\geq 2$.
Assume 
\[
g\geq \sum_{i\in\mathbb{N}} [\mathfrak{g}:V_{2i+1}].
\]
Then for any neighborhood $W$ of $\rho|_{\Gamma_{g}}$ in $\mathrm{Hom}(\Gamma_{g},G)$
there exists $\rho'\in W$ satisfying the following: 
\begin{enumerate}
    \item 
    \label{property:disconti}
    $\rho'$ is discrete and faithful, and $\rho'(\Gamma_{g})$ is a discontinuous group for $G/H$.
    \item
    \label{property:zariski-closure}
    $\overline{\rho'(\Gamma_{g})}^{\Zariski} = G^\rho_{\even}$.
    \item 
    \label{property:reductive-closure}
    For any closed Lie subgroup $L$ of $G$ with finitely many connected components,
    if $L$ contains $\rho'(\Gamma_{g})$, then $L$ also contains the Euclidean identity component $(G^{\rho}_{\even})_{0}$ of $G^{\rho}_{\even}$.
\end{enumerate}
\end{theorem}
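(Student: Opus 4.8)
The strategy is to apply Theorem~\ref{theorem:deformation-zariski-closure} with $G' = G^{\rho}_{\even}$, and then transfer the conclusions about the deformed surface group into the language of proper discontinuity using the properness criterion (Fact~\ref{fact:propernesscriterion}) and Kassel's stability theorem (Fact~\ref{fact:stability-for-properness}). First I would note that $G^{\rho}_{\even}$ is a Zariski-connected real algebraic subgroup of $G$ containing $\rho(SL(2,\R))$ and contained in itself, so the hypothesis \eqref{assumption:SL-G'-Grho} is satisfied; moreover its Lie algebra $\mathfrak{g}^{\rho}_{\even}$ trivially contains the centralizer $\mathfrak{z}_{\mathfrak{g}}(\rho(\mathfrak{sl}(2,\R)))$, since that centralizer sits in the zero weight space $\mathfrak{g}(0;\rho(A_0))$. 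The genus assumption $g \geq \sum_{i} [\mathfrak{g}:V_{2i+1}]$ dominates $\sum_{i}[\mathfrak{g}^{\rho}_{\even}:V_{2i+1}]$ by Lemma~\ref{lemma:rho-even} (the odd-dimensional submodules of $\mathfrak{g}$ all lie in $\mathfrak{g}^{\rho}_{\even}$), so \eqref{eq:genus-condition} holds. Thus Theorem~\ref{theorem:deformation-zariski-closure}(ii) furnishes, in any neighborhood $W$ of $\rho|_{\Gamma_g}$, a representation $\rho'$ with $\overline{\rho'(\Gamma_g)}^{\Zariski} = G^{\rho}_{\even}$, which gives \eqref{property:zariski-closure}, and with $\overline{\rho'(\Gamma_g)}^{\mathrm{fcc}} \supset (G^{\rho}_{\even})_0$, which is precisely \eqref{property:reductive-closure} by the definition of $\overline{\cdot}^{\mathrm{fcc}}$.

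For \eqref{property:disconti}, the plan is to invoke Fact~\ref{fact:stability-for-properness} with $L = SL(2,\R)$ (a connected linear simple Lie group of real rank $1$), $\Gamma = \Gamma_g$ a torsion-free cocompact discrete subgroup. Since the $SL(2,\R)$-action on $G/H$ via $\rho$ is proper by hypothesis, Fact~\ref{fact:stability-for-properness} yields a neighborhood $W_0$ of $\rho|_{\Gamma_g}$ in $\Hom(\Gamma_g,G)$ on which every representation is discrete, faithful, and has properly discontinuous image action on $G/H$. Shrinking $W$ to lie inside $W_0$ before applying Theorem~\ref{theorem:deformation-zariski-closure}, the $\rho'$ produced above automatically satisfies \eqref{property:disconti}.

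The one point that requires genuine care --- and which I expect to be the main (if modest) obstacle --- is making \eqref{property:reductive-closure} follow cleanly from Theorem~\ref{theorem:deformation-zariski-closure}(ii). A closed Lie subgroup $L$ with finitely many connected components containing $\rho'(\Gamma_g)$ is one of the groups whose intersection defines $\overline{\rho'(\Gamma_g)}^{\mathrm{fcc}}$, so $L \supset \overline{\rho'(\Gamma_g)}^{\mathrm{fcc}} \supset (G^{\rho}_{\even})_0$ directly; the subtlety is only that $\overline{\rho'(\Gamma_g)}^{\mathrm{fcc}}$ itself need not have finitely many components (as the Remark after Definition~\ref{definition:fcc-closure} warns), but this does not affect the containment $L \supset (G^{\rho}_{\even})_0$, which is all that is claimed. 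Finally I would organize the write-up so that $W$ is first replaced by $W \cap W_0$, then Theorem~\ref{theorem:deformation-zariski-closure}(ii) is applied to extract $\rho'$, and then \eqref{property:disconti}, \eqref{property:zariski-closure}, \eqref{property:reductive-closure} are read off in turn; no new computation beyond the module-theoretic bookkeeping above is needed.
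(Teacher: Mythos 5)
Your proposal is correct and follows essentially the same route as the paper: shrink $W$ using Kassel's stability theorem (Fact~\ref{fact:stability-for-properness}) with $L=SL(2,\R)$ to secure property~(\ref{property:disconti}), then apply Theorem~\ref{theorem:deformation-zariski-closure} with $G'=G^{\rho}_{\even}$ (noting $\mathfrak{z}_{\mathfrak{g}}(\rho(\mathfrak{sl}(2,\R)))\subset\mathfrak{g}(0;\rho(A_0))\subset\mathfrak{g}^{\rho}_{\even}$) to obtain (\ref{property:zariski-closure}) and (\ref{property:reductive-closure}). Your extra bookkeeping on the genus bound and on reading (\ref{property:reductive-closure}) off the fcc-closure matches what the paper leaves implicit.
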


\begin{proof}
    Let $W$ be any neighborhood of $\rho|_{\Gamma_{g}}$ in $\mathrm{Hom}(\Gamma_{g},G)$.
    Now we apply Fact \ref{fact:stability-for-properness} (stability for proper discontinuity) to the setting $L=SL(2,\R)$.
    Then replacing $W$ with a smaller one, we assume that any element of $W$ satisfies the property (\ref{property:disconti}).
    Furthermore applying Theorem \ref{theorem:deformation-zariski-closure} to $G'=G^\rho_{\even}$, 
    we obtain $\rho'\in W$ satisfying the properties (\ref{property:zariski-closure}) and (\ref{property:reductive-closure}) since $\mathfrak{g}^{\rho}_{\even}$ contains the centralizer $\mathfrak{z}_{\mathfrak{g}}(\rho(\mathfrak{sl}(2,\R)))$.
    Thus our claim is proved.
\end{proof}

As an immediate consequence of the properness criterion (Fact \ref{fact:propernesscriterion}) and Lemma \ref{lemma:grhoev_fulrank}, 
we also note the following (the Calabi--Markus phenomenon):

\begin{proposition}
\label{proposition:sl_2_Grho_fulrank}
Let $H$ be a non-compact closed subgroup of a \reductive~$G$.
For any Lie group homomorphism $\rho\colon SL(2,\R)\rightarrow G$, the $(G^\rho_\even)_{0}$-action on $G/H$ is not proper.
\end{proposition}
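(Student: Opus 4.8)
The plan is to deduce the proposition directly from the properness criterion (Fact~\ref{fact:propernesscriterion}) together with the full-rank statement of Lemma~\ref{lemma:grhoev_fulrank}. By Lemma~\ref{lemma:grhoev_fulrank} we may choose the maximal split abelian subspace $\mathfrak{a}$ of $\mathfrak{g}$, together with the closed Weyl chamber $\mathfrak{a}_{+}$ and the Cartan projection $\mu\colon G\rightarrow\mathfrak{a}_{+}$ attached to it, so that $\mathfrak{a}\subset\mathfrak{g}^{\rho}_{\even}$; this entails no loss of generality, since the properness of the $(G^{\rho}_{\even})_{0}$-action on $G/H$ is unaffected when $\rho$ (hence $G^{\rho}_{\even}$) is replaced by a conjugate under an element of $G$, and any two maximal split abelian subspaces of $\mathfrak{g}$ are such conjugates of one another. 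With this choice, $\exp\mathfrak{a}\subset(G^{\rho}_{\even})_{0}$ and $\mu(\exp A)=A$ for every $A\in\mathfrak{a}_{+}$, so we obtain the inclusion $\mathfrak{a}_{+}\subset\mu\big((G^{\rho}_{\even})_{0}\big)$.

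Next I would exploit the non-compactness of $H$. Because the Cartan projection $\mu$ is a proper map and $H$ is a non-compact closed subgroup of $G$, the image $\mu(H)$ is an unbounded subset of $\mathfrak{a}_{+}$. Taking $C=\{0\}$ in Definition~\ref{definition:tilde_fork} then shows that $\mu(H)\pitchfork\mu(H)$ in $\mathfrak{a}$ \emph{fails}, since $\mu(H)\cap\mu(H)=\mu(H)$ is not relatively compact. The relation $\pitchfork$ is monotone in its first argument (a subset of a relatively compact set is relatively compact), and $\mu(H)\subset\mathfrak{a}_{+}\subset\mu\big((G^{\rho}_{\even})_{0}\big)$ by the previous paragraph; hence $\mu\big((G^{\rho}_{\even})_{0}\big)\pitchfork\mu(H)$ in $\mathfrak{a}$ also fails.

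Finally, by Fact~\ref{fact:propernesscriterion} the failure of $\mu\big((G^{\rho}_{\even})_{0}\big)\pitchfork\mu(H)$ in $\mathfrak{a}$ is equivalent to the failure of $(G^{\rho}_{\even})_{0}\pitchfork H$ in $G$, which, as $(G^{\rho}_{\even})_{0}$ and $H$ are closed subgroups of $G$, means precisely that the $(G^{\rho}_{\even})_{0}$-action on $G/H$ is not proper. There is no substantial obstacle in this argument: the only points that require a little care are the reduction to the case $\mathfrak{a}\subset\mathfrak{g}^{\rho}_{\even}$ and the standard fact that a non-compact closed subgroup of a real reductive group has unbounded image under the Cartan projection (properness of $\mu$), neither of which is genuinely delicate.
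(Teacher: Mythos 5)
Your argument is correct and is exactly the route the paper intends: the paper states Proposition~\ref{proposition:sl_2_Grho_fulrank} as an immediate consequence of Fact~\ref{fact:propernesscriterion} and Lemma~\ref{lemma:grhoev_fulrank}, and your writeup (choosing $\mathfrak{a}\subset\mathfrak{g}^{\rho}_{\even}$, noting $\mu(H)\subset\mathfrak{a}_{+}\subset\mu\bigl((G^{\rho}_{\even})_{0}\bigr)$ with $\mu(H)$ unbounded by non-compactness of $H$, and invoking the criterion) simply makes that one-line deduction explicit. No gaps; the conjugation reduction and the properness of $\mu$ are handled correctly.
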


To prove Theorem \ref{theorem:any_sl2_nonstandard}, it suffices to show the following:
\begin{theorem}
\label{theorem:strong-non-standard}
Let $G/H$, $\rho\colon SL(2,\R)\rightarrow G$, and $\Gamma_{g}$ be as in the setting of Theorem \ref{theorem:any_sl2_geven}.
Assume $H$ is non-compact. Then for any neighborhood $W$ of 
$\rho|_{\Gamma_{g}}$ in 
$\mathrm{Hom}(\Gamma_{g},G)$, there exists a representation $\rho'$ in $W$ satisfying the following: 
\begin{enumerate}
    \item $\rho'$ is discrete and faithful, and $\rho'(\Gamma_{g})$ is a discontinuous group for $G/H$.
    \item For any closed Lie subgroup $L$ of $G$ with finitely many connected components,
    if $L$ contains $\rho'(\Gamma_{g})$, then the $L$-action on $G/H$ is not proper.
\end{enumerate}
\end{theorem}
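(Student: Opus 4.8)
The plan is to deduce Theorem~\ref{theorem:strong-non-standard} directly from Theorem~\ref{theorem:any_sl2_geven}, using the structural conclusion~(\ref{property:reductive-closure}) there together with the Calabi--Markus type statement of Proposition~\ref{proposition:sl_2_Grho_fulrank}. First I would apply Theorem~\ref{theorem:any_sl2_geven} to the given $G/H$, $\rho$, and $\Gamma_{g}$: the hypothesis $g\geq \sum_{i}[\mathfrak{g}:V_{2i+1}]$ is exactly the one assumed here, so for any neighborhood $W$ of $\rho|_{\Gamma_{g}}$ we obtain $\rho'\in W$ which is discrete and faithful, gives a discontinuous group $\rho'(\Gamma_{g})$ for $G/H$ (property~(\ref{property:disconti})), and has the property that any closed Lie subgroup $L$ of $G$ with finitely many connected components containing $\rho'(\Gamma_{g})$ must contain $(G^{\rho}_{\even})_{0}$ (property~(\ref{property:reductive-closure})). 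This $\rho'$ will be the representation we want; part~(i) of the theorem is immediate from~(\ref{property:disconti}).

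For part~(ii), suppose $L$ is a closed Lie subgroup of $G$ with finitely many connected components containing $\rho'(\Gamma_{g})$. By property~(\ref{property:reductive-closure}) of Theorem~\ref{theorem:any_sl2_geven}, $L\supset (G^{\rho}_{\even})_{0}$. Since $H$ is non-compact by assumption, Proposition~\ref{proposition:sl_2_Grho_fulrank} tells us that the $(G^{\rho}_{\even})_{0}$-action on $G/H$ is not proper, i.e., $(G^{\rho}_{\even})_{0}\not\pitchfork H$ in $G$ in the notation of Definition~\ref{definition:tilde_fork}. Because properness of the $L$-action on $G/H$ is equivalent to $L\pitchfork H$ in $G$, and $L\supset (G^{\rho}_{\even})_{0}$, the failure of properness for the smaller group forces the failure of properness for $L$: if for every compact $C$ the set $(G^{\rho}_{\even})_{0}\cap CHC$ were to fail to be relatively compact, then a fortiori $L\cap CHC\supset (G^{\rho}_{\even})_{0}\cap CHC$ fails to be relatively compact for that same $C$. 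Hence the $L$-action on $G/H$ is not proper, proving~(ii).

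Finally, to see that this yields Theorem~\ref{theorem:any_sl2_nonstandard}, one observes that a standard discontinuous group is by definition contained in a reductive subgroup of $G$ acting properly on $G/H$; such a reductive subgroup is in particular a closed Lie subgroup with finitely many connected components, so part~(ii) rules it out, and $\rho'(\Gamma_{g})$ is non-standard. I do not expect a genuine obstacle here: the content has been front-loaded into Theorem~\ref{theorem:any_sl2_geven} (which in turn rests on Theorem~\ref{theorem:deformation-zariski-closure} and Kassel's stability theorem) and into Proposition~\ref{proposition:sl_2_Grho_fulrank}. The only point requiring a little care is the monotonicity of the relation $\pitchfork$ under enlarging the acting group, and the bookkeeping that the deformed representation remains inside the shrunk neighborhood $W$ throughout; both are routine.
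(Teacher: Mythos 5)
Your proposal is correct and follows exactly the paper's route: the paper proves this theorem in one line by combining Theorem~\ref{theorem:any_sl2_geven} (in particular property~(\ref{property:reductive-closure})) with Proposition~\ref{proposition:sl_2_Grho_fulrank}, using the same monotonicity of $\pitchfork$ under enlarging the acting group that you spell out. Only note the quantifier slip in your part~(ii): non-properness means there \emph{exists} a compact $C$ with $(G^{\rho}_{\even})_{0}\cap CHC$ not relatively compact, and that single witness $C$ already works for $L$.
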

\begin{proof}
    Our claim follows immediately from Theorem \ref{theorem:any_sl2_geven} and Proposition \ref{proposition:sl_2_Grho_fulrank}.
\end{proof}

\section{Zariski-dense discontinuous surface groups for symmetric spaces}
\label{section:Zariski-dense}

In this section, we give a proof of 
Theorem \ref{theorem:ZariskidenseSurface_on_symm}.

\subsection{Homomorphisms from $SL(2,\R)$ to $G$}
\label{subsection:homSL2G}

In this subsection, in order to prove Theorem \ref{theorem:ZariskidenseSurface_on_symm},
we fix some terminologies and recall some lemmas.

Let $G$ be a real reductive group in the sense of Section \ref{subsection:terminology},
and denote by $\mathfrak{g}$ the Lie algebra of $G$.
Now we introduce the following notation.

\begin{definition}
\label{def:even-hom}
We say that a Lie group homomorphism $\rho\colon SL(2,\R) \rightarrow G$ is even if $G^\rho_{\even} = G$ (see Definition \ref{intro_def:rho-G-g} for the definition of $G^{\rho}_{\even}$).
Furthermore, we also say that a Lie algebra homomorphism $\rho\colon\mathfrak{sl}(2,\R) \rightarrow \mathfrak{g}$
is even if $\mathfrak{g}^\rho_\even = \mathfrak{g}$ (see Definition \ref{definition:geven} 
for the definition of
$\mathfrak{g}^\rho_\even$).
\end{definition}

Since the algebraic group $\mathbf{G}$ is connected, 
one can see that a Lie group homomorphism $\rho\colon SL(2,\R) \rightarrow G$ is even if and only if 
the corresponding Lie algebra homomorphism $\rho\colon \mathfrak{sl}(2,\R) \rightarrow \mathfrak{g}$ 
is even.
We also note that 
the two equivalent conditions 
are also equivalent to the condition that 
the complex nilpotent adjoint orbit $\mathcal{O}^\C_\rho$ is even in the sense of \cite[Chapter 3.8]{CollingwoodMcGovern},
where we define $\mathcal{O}^\C_\rho$ 
by the adjoint orbit 
in the complexification 
$\mathfrak{g}^\C$ of $\mathfrak{g}$ 
through the nilpotent element 
\[
\rho \begin{pmatrix}
    0 & 1 \\ 0 & 0
\end{pmatrix} \in \mathfrak{g} \subset \mathfrak{g}^\C.
\]

Let us fix 
$\theta$, $\mathfrak{a}$ and $\Sigma_+$, and use the symbol $\mathfrak{a}_+$ and $\mathfrak{b}_+$
as in the sense of Section \ref{section:KobayashiBenoist}. 
The proposition and theorem below will play key roles in the next subsection:

\begin{proposition}[{\cite[Lemma 3.1]{Okuda17}}]\label{proposition:sl2b}
    For a Lie algebra homomorphism $\rho\colon \mathfrak{sl}(2,\R) \rightarrow \mathfrak{g}$, 
    if $\rho(A_0) \in \mathfrak{a}_+$, then $\rho(A_0) \in \mathfrak{b}_+$.
\end{proposition}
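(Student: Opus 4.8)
The plan is to reduce the statement to a purely rank-one computation inside the $\mathfrak{sl}(2,\R)$-triple, using the fact that $\iota$ is minus the longest Weyl group element. First I would complete $\rho(A_0)$ to an $\mathfrak{sl}_2$-triple $(\rho(A_0),\rho(E),\rho(F))$ with $\rho(E)=\rho\begin{pmatrix}0&1\\0&0\end{pmatrix}$, $\rho(F)=\rho\begin{pmatrix}0&0\\1&0\end{pmatrix}$, and observe that conjugating by $\rho\begin{pmatrix}0&1\\-1&0\end{pmatrix}\in SL(2,\R)$ (an element of $G$) carries $\rho(A_0)$ to $-\rho(A_0)$. Hence $\rho(A_0)$ and $-\rho(A_0)$ are conjugate under $G$; since both lie in $\mathfrak{a}$ and $\rho(A_0)\in\mathfrak{a}_+$ by hypothesis, the element $-\rho(A_0)$ has a unique $W$-conjugate in the closed chamber $\mathfrak{a}_+$, and that element must be $w_0^{-1}(-\rho(A_0))$ up to the action of the stabilizer. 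Concretely, for any hyperbolic $X\in\mathfrak{a}_+$ the unique point of $W X\cap\mathfrak{a}_+$ is $X$ itself, and the unique point of $W(-X)\cap\mathfrak{a}_+$ is $-w_0(X)=\iota(X)$. Therefore, since $-\rho(A_0)$ is $G$-conjugate (a fortiori $W$-conjugate, because $\rho(A_0)$ is hyperbolic and two hyperbolic elements of $\mathfrak{a}$ are $G$-conjugate iff they are $W$-conjugate) to $\rho(A_0)$, I get $\iota(\rho(A_0))=-w_0(\rho(A_0))=\rho(A_0)$, i.e. $\rho(A_0)\in\mathfrak{b}$; combined with $\rho(A_0)\in\mathfrak{a}_+$ this gives $\rho(A_0)\in\mathfrak{b}_+$.

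The key steps in order are: (1) build the triple and exhibit the element of $SL(2,\R)$, hence of $G$, conjugating $\rho(A_0)$ to $-\rho(A_0)$; (2) invoke that $\rho(A_0)$ is hyperbolic (as already noted in the proof of Lemma \ref{lemma:grhoev_fulrank}), so that $G$-conjugacy of hyperbolic elements lying in $\mathfrak{a}$ is detected by $W$; (3) use uniqueness of chamber representatives to identify the $W$-image of $-\rho(A_0)$ in $\mathfrak{a}_+$ as $\iota(\rho(A_0))$; (4) conclude $\iota(\rho(A_0))=\rho(A_0)$.

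The main obstacle I expect is step (2): one must be careful that the conjugating element $\rho\begin{pmatrix}0&1\\-1&0\end{pmatrix}$ lies in $G$ (it does, since $\rho$ maps into $G$ by assumption) and, more substantively, that for hyperbolic elements of the fixed maximal split abelian subspace $\mathfrak{a}$, conjugacy under $G$ coincides with conjugacy under the normalizer $N_G(\mathfrak{a})$ modulo the centralizer, i.e. under $W$. This is the statement that $\mathfrak{a}_+$ is a fundamental domain for $W$ on the hyperbolic elements of $\mathfrak{a}$, which is standard; but since the paper works with $G$ an open subgroup of $\mathbf{G}(\R)$ rather than the full adjoint group, I would phrase the argument at the Lie-algebra level, noting that $\Ad(G)$ contains the inner automorphism group generated by the image of $SL(2,\R)$ and that $\rho(A_0),-\rho(A_0)\in\mathfrak{a}$ are $\Ad(G)$-conjugate, hence $W$-conjugate. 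Everything else is a short, routine manipulation with the opposition involution.
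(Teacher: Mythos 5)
Your argument is correct. Note that the paper gives no proof of Proposition \ref{proposition:sl2b} at all---it simply quotes \cite[Lemma 3.1]{Okuda17}---and your proof is the standard short argument for this fact: the Weyl element of the $\mathfrak{sl}_2$-triple conjugates $\rho(A_0)$ to $-\rho(A_0)$, conjugacy under the identity component of elements of $\mathfrak{a}$ forces $W$-conjugacy, and uniqueness of dominant representatives yields $\iota(\rho(A_0))=\rho(A_0)$. The only repair needed is the cosmetic one you already flag: the conjugating element should be written as $\exp\bigl(\tfrac{\pi}{2}(\rho(E)-\rho(F))\bigr)\in G$ (or one works directly with $\exp\circ\ad$ as in your closing remark), rather than applying the Lie algebra homomorphism $\rho$ to a matrix of $SL(2,\R)$.
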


\begin{theorem}\label{theorem:bspansevensl2}
There exists a family of Lie algebra homomorphisms 
\[
\rho_1,\dots,\rho_k\colon \mathfrak{sl}(2,\R) \rightarrow \mathfrak{g}
\]
satisfying the following conditions:
\begin{enumerate}
    \item $\rho_l$ is even for each $l = 1,\dots,k$.
    \item $\rho_l(A_0) \in \mathfrak{a}_+$ for each $l = 1,\dots,k$.
    \item $\{ \rho_1(A_0),\dots,\rho_k(A_0) \}$ forms a basis of $\mathfrak{b}$.
\end{enumerate}
\end{theorem}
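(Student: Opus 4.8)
The plan is to reduce the statement, in two formal steps, to a case‑by‑case fact about even nilpotent orbits of split simple Lie algebras. First observe that $\mathfrak{b}\subseteq\mathfrak{g}_{\mathrm{ss}}$, and that $\mathfrak{b}$, $\mathfrak{a}_+$ and $W$ all decompose compatibly with the decomposition of $\mathfrak{g}_{\mathrm{ss}}$ into simple ideals; moreover an even homomorphism into a simple ideal $\mathfrak{g}_i$ is even into $\mathfrak{g}$, the eigenvalues of $\mathrm{ad}_{\mathfrak{g}}(\rho(A_0))$ outside $\mathfrak{g}_i$ being $0$. So it suffices to treat $\mathfrak{g}$ simple and noncompact, in which case the theorem is equivalent to the assertion that the characteristics $\rho(A_0)\in\mathfrak{a}_+$ of even $\mathfrak{sl}(2,\R)$-triples in $\mathfrak{g}$ \emph{span} $\mathfrak{b}$: by Proposition~\ref{proposition:sl2b} each such characteristic lies in $\mathfrak{b}_+$, there are only finitely many of them (finitely many real nilpotent orbits), and from a spanning set one extracts a basis and keeps the corresponding $\rho$'s.

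Next I would reduce to the split case. For each simple restricted root $\alpha\in\Pi$ choose an $\mathfrak{sl}(2,\R)$-triple $(E_\alpha,H_\alpha,F_\alpha)$ with $E_\alpha\in\mathfrak{g}_\alpha$, $H_\alpha\in\mathfrak{a}$ and $\alpha(H_\alpha)=2$; the subalgebra $\mathfrak{g}_0\subseteq\mathfrak{g}$ generated by these triples is split simple with $\mathfrak{a}$ as a maximal split Cartan subspace, having restricted root system the reduced system $\Sigma_{\mathrm{ind}}$ generated by $\Pi$, so that $W$, $\mathfrak{a}_+$, $w_0$ and $\mathfrak{b}$ are the same for $\mathfrak{g}_0$ and for $\mathfrak{g}$. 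An even $\mathfrak{sl}(2,\R)$-triple in $\mathfrak{g}_0$ is automatically even in $\mathfrak{g}$ (the eigenvalue of $\mathrm{ad}_{\mathfrak{g}}(\rho(A_0))$ on $\mathfrak{g}_\beta$ is $\beta(\rho(A_0))$, which is even for indivisible $\beta$ by hypothesis and equals $2\gamma(\rho(A_0))$ when $\beta=2\gamma$), with unchanged characteristic. Hence it is enough to prove the theorem when $\mathfrak{g}$ is split and simple.

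For split simple $\mathfrak{g}$ the split real form meets every complex nilpotent orbit of $\mathfrak{g}^\C$, and evenness is detected on the complexification, so the problem becomes: \emph{the characteristics of the even complex nilpotent orbits of $\mathfrak{g}^\C$ span the $(-w_0)$-fixed subspace $\mathfrak{h}^{-w_0}$ of a Cartan subalgebra}. This is the heart of the matter and the step I expect to be the main obstacle, since — unlike the two reductions above, which are formal — it genuinely uses the structure of even orbits (recall that not every $\{0,2\}$-labelled element of $\mathfrak{h}$ is a characteristic). I would settle it through the classification of nilpotent orbits and their weighted Dynkin diagrams as in \cite[Chapters 3.8 and 5]{CollingwoodMcGovern}: the regular orbit always contributes the $(-w_0)$-fixed interior point $2\rho^\vee$, and for the classical types one writes down an explicit family of even orbits — e.g.\ of ``staircase'' type such as $[2a+1,1^{\dots}]$ in types $A$ and $B$ — whose weighted Dynkin diagrams are manifestly linearly independent and span $\mathfrak{h}^{-w_0}$; the exceptional types are then a finite inspection of the tables. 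A more uniform route would be to produce, for each $(-w_0)$-orbit of simple roots, an even orbit whose characteristic is supported exactly on that orbit; this holds in the cases I have checked, but absent a general argument the safe plan is the case‑by‑case verification.

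Finally, assembling the proof: among the even homomorphisms produced above, select a subfamily $\rho_1,\dots,\rho_k$ whose characteristics form a basis of $\mathfrak{b}$; conjugating each triple by a representative of the appropriate Weyl group element in $N_K(\mathfrak{a})$ moves $\rho_l(A_0)$ into $\mathfrak{a}_+$ (hence, by Proposition~\ref{proposition:sl2b}, into $\mathfrak{b}_+$), which yields conditions (i), (ii) and (iii).
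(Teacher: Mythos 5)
The paper's own proof of Theorem~\ref{theorem:bspansevensl2} consists of the one-line reduction to semisimple $\mathfrak{g}$ (using $\mathfrak{b}\subset\mathfrak{g}_{\mathrm{ss}}$) followed by a citation of \cite[Theorem 1.1 and Remark 3.2]{Okuda17}; so what you propose is not a variant of the paper's argument but an attempt to reprove Okuda's theorem from scratch. Your two formal reductions are essentially sound. Passing to a noncompact simple ideal is fine, and the passage to the split case is a correct and genuinely different idea, but it is asserted rather than proved: that the subalgebra generated by the simple restricted-root triples is split of type $\Sigma_{\mathrm{ind}}$ with the same $\mathfrak{a}$, $W$, $w_0$ needs an argument — e.g.\ that $\alpha-\beta\notin\Sigma$ for distinct simple restricted roots and the root-string property force $(\mathrm{ad}\,E_\alpha)^{1-\langle\beta,\alpha^\vee\rangle}E_\beta\in\mathfrak{g}_{\beta+(1-\langle\beta,\alpha^\vee\rangle)\alpha}=0$, so the Chevalley--Serre relations hold automatically and Serre's theorem gives an isomorphism onto the split algebra of type $\Sigma_{\mathrm{ind}}$, injective because the coroots $H_\alpha$ are independent. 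Your transfer of evenness from $\mathfrak{g}_0$ to $\mathfrak{g}$ (including the spaces $\mathfrak{g}_{2\gamma}$) and the identification of real characteristics in $\mathfrak{a}_+$ of a split form with the complex weighted-Dynkin characteristics are correct.

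The genuine gap is the step you yourself single out as the heart of the matter: that the characteristics of the even complex nilpotent orbits span $\mathfrak{h}^{-w_0}$ for each simple type. This is precisely the content the paper outsources to \cite{Okuda17}, and in your write-up it remains a plan, not a verification: the regular orbit and a few ``staircase'' families do not settle it (for instance in $\mathfrak{sl}(2m,\C)$ the odd hooks $[2a+1,1^{\dots}]$ supply at most $m-1$ independent characteristics while $\dim\mathfrak{h}^{-w_0}=m$, so one must also use all-even-parts partitions), and the exceptional types are left as an unexecuted inspection of the tables in \cite{CollingwoodMcGovern}. Since, as you note, not every $\{0,2\}$-labelled dominant element is a weighted Dynkin diagram, this finite check is exactly where the real content lies; until it is carried out case by case (or replaced by the citation to \cite[Theorem 1.1 and Remark 3.2]{Okuda17}, which moreover covers every real semisimple $\mathfrak{g}$ directly, with no split reduction needed), the proposal is an outline of a plausible proof rather than a proof. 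The final assembly step (extracting a basis from the spanning set and conjugating each triple so that $\rho_l(A_0)\in\mathfrak{a}_+$, then invoking Proposition~\ref{proposition:sl2b}) is fine.
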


\begin{proof}[of Theorem \ref{theorem:bspansevensl2}]
For the proof we only need to focus on the case where $\mathfrak{g}$ is semisimple 
since $\mathfrak{b}$ is contained in the semisimple part of $\mathfrak{g}$ (see Section \ref{section:KobayashiBenoist}).
For semisimple $\mathfrak{g}$, the claim is proved in \cite[Theorem 1.1 and Remark 3.2]{Okuda17}.
\end{proof}

\subsection{Proof of Theorem  \ref{theorem:ZariskidenseSurface_on_symm}}
\label{section:proof_ZSonSymm}

Let us take $(G,H)$ as in Section \ref{subsection:terminology} and assume that $(G,H)$ a symmetric pair. 
We denote by $(\mathfrak{g},\mathfrak{h})$ the symmetric pair of Lie algebras corresponding to $(G,H)$.
Let us take a Cartan involution $\theta$ of $G$ with $\theta(H) \subset H$. The Cartan decompositions of $\mathfrak{g}$ and $\mathfrak{h}$ corresponding to $\theta$ are denoted by 
\begin{align*}
    \mathfrak{g} = \mathfrak{k} + \mathfrak{p} \text{ and } ~
    \mathfrak{h} = \mathfrak{k}_{\mathfrak{h}} + \mathfrak{p}_{\mathfrak{h}}.
\end{align*}

Let us fix a maximal abelian subspace $\mathfrak{a}_{\mathfrak{h}}$ of $\mathfrak{p}_\mathfrak{h}$ and that $\mathfrak{a}$ of $\mathfrak{p}$ with $\mathfrak{a}_{\mathfrak{h}} \subset \mathfrak{a}$. The restricted root system of $(\mathfrak{g},\mathfrak{a})$ is denoted by $\Sigma$.
We shall take an ordered basis $\mathcal{B}_{\mathfrak{h}}$ of $\mathfrak{a}_{\mathfrak{h}}$ 
and extend it to that $\mathcal{B}_{\mathfrak{g}}$ of $\mathfrak{a}$,
and write $\Sigma_+$ for the positive system of $\Sigma$ with respect to the ordered basis $\mathcal{B}_{\mathfrak{g}}$.
The closed Weyl chamber in $\mathfrak{a}$ with respect to $\Sigma_+$ is written as $\mathfrak{a}_+$.
As in Section \ref{section:KobayashiBenoist}, 
we define the subspace $\mathfrak{b}$ of $\mathfrak{a}$ by 
\[
\mathfrak{b} := \{ A \in \mathfrak{a} \mid \iota(A) = A \}
\]
and put $\mathfrak{b}_+ := \mathfrak{b} \cap \mathfrak{a}_+$, 
where $\iota := -w_0$ denotes the opposition involution with respect to the positive system $\Sigma_+$. 

The goal of this section is to show the following theorem, which includes Theorem~\ref{theorem:ZariskidenseSurface_on_symm}:

\begin{theorem}\label{theorem:genZariskidenseSurface_on_symm}
In the setting above, 
the following eight conditions are equivalent:
\begin{enumerate}
\item \label{item:Zariskidense:properSL2} 
There exists a Lie group homomorphism $\rho\colon SL(2,\R) \rightarrow G$ such
that $SL(2,\R)$ acts on $G/H$ properly via $\rho$.
\item \label{item:Zariskidense:properevenSL2} There exists an even Lie group homomorphism $\rho\colon SL(2,\R) \rightarrow G$ such
that $SL(2,\R)$ acts on $G/H$ properly via $\rho$.
\item \label{item:Zariskidense:Zariskidense} There exist an integer $g\geq 2$ and a discontinuous group $\Gamma_g$ for $G/H$ such that $\Gamma_g$ is isomorphic to $\pi_1(\Sigma_g)$ and Zariski-dense in $G$.
\item \label{item:Zariskidense:ZariskidenseFree} There exist an integer $g \geq 2$ and a discontinuous group $F_g$ for $G/H$ such that $F_g$ is the free group of rank $g$ and Zariski-dense in $G$.
\item \label{item:Zariskidense:nonva_disconti} 
There exists a discontinuous group $\Gamma$ for $G/H$ which is not virtually abelian.
\item \label{item:Zariskidense:ba} $\mathfrak{b}_+ \not \subset \mathfrak{a}_{\mathfrak{h}}$.
\item \label{item:Zariskidense:LieSL2} There exists a Lie algebra homomorphism $\rho\colon \mathfrak{sl}(2,\R) \rightarrow \mathfrak{g}$ such that 
$\rho(A_0) \in \mathfrak{a}_+ \smallsetminus \mathfrak{a}_{\mathfrak{h}}$.
\item \label{item:Zariskidense:evenLieSL2} There exists an even Lie algebra homomorphism $\rho\colon \mathfrak{sl}(2,\R) \rightarrow \mathfrak{g}$ such that 
$\rho(A_0) \in \mathfrak{a}_+ \smallsetminus \mathfrak{a}_{\mathfrak{h}}$.
\end{enumerate}
\end{theorem}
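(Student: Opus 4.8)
The eight conditions split naturally into two groups: the "existence / Lie-algebraic" conditions \eqref{item:Zariskidense:properSL2}, \eqref{item:Zariskidense:properevenSL2}, \eqref{item:Zariskidense:nonva_disconti}, \eqref{item:Zariskidense:ba}, \eqref{item:Zariskidense:LieSL2}, \eqref{item:Zariskidense:evenLieSL2}, which I expect to follow from results already quoted (Benoist's Fact~\ref{fact:Benoist}, Okuda's Proposition~\ref{proposition:sl2b} and Theorem~\ref{theorem:bspansevensl2}, and the properness criterion via Corollary~\ref{cor:sl_2-proper}), and the "surface-group realisation" conditions \eqref{item:Zariskidense:Zariskidense}, \eqref{item:Zariskidense:ZariskidenseFree}, where the new input is Theorem~\ref{theorem:any_sl2_geven} together with Kassel's stability Fact~\ref{fact:stability-for-properness}. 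The plan is to prove the cycle
\[
\eqref{item:Zariskidense:evenLieSL2}\Rightarrow\eqref{item:Zariskidense:properevenSL2}\Rightarrow\eqref{item:Zariskidense:Zariskidense}\Rightarrow\eqref{item:Zariskidense:nonva_disconti}\Rightarrow\eqref{item:Zariskidense:ba}\Rightarrow\eqref{item:Zariskidense:evenLieSL2},
\]
then slot the remaining conditions in as easy equivalences hanging off this cycle.

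\textbf{The Lie-algebraic core.} First I would record, once and for all, that by Corollary~\ref{cor:sl_2-proper} (applied after conjugating so that $\rho(A_0)\in\mathfrak{a}_+$, which is legitimate since $\rho(A_0)$ is hyperbolic) the $SL(2,\R)$-action on $G/H$ via $\rho$ is proper if and only if $\rho(A_0)\notin W\mathfrak{a}_\mathfrak{h}$; and since $\rho(A_0)\in\mathfrak{a}_+$ we have by Proposition~\ref{proposition:sl2b} that $\rho(A_0)\in\mathfrak{b}_+$, so on $\mathfrak{b}_+$ the condition $\rho(A_0)\notin W\mathfrak{a}_\mathfrak{h}$ is equivalent to $\rho(A_0)\notin\mathfrak{a}_\mathfrak{h}$ (because $W\mathfrak{a}_\mathfrak{h}\cap\mathfrak{b}_+=\mathfrak{a}_\mathfrak{h}\cap\mathfrak{b}_+$ for a symmetric space; here I use that $\mathfrak{a}_\mathfrak{h}\subset\mathfrak{a}_+$ by our choice of ordered basis $\mathcal{B}_\mathfrak{h}\subset\mathcal{B}_\mathfrak{g}$ and the known description $\mu(H)=W\mathfrak{a}_\mathfrak{h}\cap\mathfrak{a}_+$). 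This dictionary immediately gives \eqref{item:Zariskidense:properSL2}$\Leftrightarrow$\eqref{item:Zariskidense:LieSL2} and \eqref{item:Zariskidense:properevenSL2}$\Leftrightarrow$\eqref{item:Zariskidense:evenLieSL2}, and also \eqref{item:Zariskidense:LieSL2}$\Rightarrow$\eqref{item:Zariskidense:ba} trivially. The implication \eqref{item:Zariskidense:ba}$\Rightarrow$\eqref{item:Zariskidense:evenLieSL2} is exactly where Theorem~\ref{theorem:bspansevensl2} enters: writing a point of $\mathfrak{b}_+\smallsetminus\mathfrak{a}_\mathfrak{h}$ — equivalently, using that $\mathfrak{a}_\mathfrak{h}$ is a proper subspace of $\mathfrak{b}$ — as a combination of the basis $\{\rho_1(A_0),\dots,\rho_k(A_0)\}$ of $\mathfrak{b}$ produced by that theorem, at least one $\rho_l(A_0)$ must fail to lie in $\mathfrak{a}_\mathfrak{h}$, and that $\rho_l$ is even with $\rho_l(A_0)\in\mathfrak{a}_+$, giving \eqref{item:Zariskidense:evenLieSL2}. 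The chain \eqref{item:Zariskidense:ba}$\Leftrightarrow$\eqref{item:Zariskidense:nonva_disconti}$\Leftrightarrow$\eqref{item:Zariskidense:ZariskidenseFree} is Benoist's Fact~\ref{fact:Benoist} verbatim (noting $\mathfrak{b}_+\subset W\mathfrak{a}_\mathfrak{h}\Leftrightarrow\mathfrak{b}_+\subset\mathfrak{a}_\mathfrak{h}$ by the remark above, so condition~\eqref{item:Zariskidense:ba} matches Benoist's condition~(iii)), and \eqref{item:Zariskidense:Zariskidense}$\Rightarrow$\eqref{item:Zariskidense:nonva_disconti} is immediate since a Zariski-dense surface group is not virtually abelian.

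\textbf{The main new implication.} The substantive step is \eqref{item:Zariskidense:properevenSL2}$\Rightarrow$\eqref{item:Zariskidense:Zariskidense}. Here I take the even $\rho\colon SL(2,\R)\to G$ acting properly on $G/H$ provided by \eqref{item:Zariskidense:properevenSL2}, so that by Definition~\ref{def:even-hom} we have $G^\rho_\even=G$. Choose a discrete surface subgroup $\Gamma_g<SL(2,\R)$ of genus $g$ large enough that $g\geq\sum_{i\in\N}[\mathfrak g:V_{2i+1}]$ (using Lemma~\ref{lem:lower-bound-of-genus} to read off this bound if desired). Apply Theorem~\ref{theorem:any_sl2_geven}: it yields $\rho'$ arbitrarily close to $\rho|_{\Gamma_g}$ which is discrete and faithful, whose image $\rho'(\Gamma_g)$ is a discontinuous group for $G/H$ (property~\eqref{property:disconti}), and whose Zariski closure is $G^\rho_\even=G$ (property~\eqref{property:zariski-closure}) — that is, $\rho'(\Gamma_g)$ is Zariski-dense in $G$. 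Since $\rho'$ is injective, $\rho'(\Gamma_g)\cong\pi_1(\Sigma_g)$, giving exactly \eqref{item:Zariskidense:Zariskidense}. I anticipate the only genuine obstacle is bookkeeping: making sure the genus bound from Theorem~\ref{theorem:any_sl2_geven} is phrased so that "sufficiently large $g$" in the statement is legitimate, and making sure the standing hypothesis that $(G,H)$ is a \emph{symmetric} pair (needed both for Okuda's Proposition~\ref{proposition:sl2b}/Theorem~\ref{theorem:bspansevensl2} and for the identity $W\mathfrak a_\mathfrak h\cap\mathfrak b_+=\mathfrak a_\mathfrak h\cap\mathfrak b_+$) is invoked precisely where it is used and nowhere circularly. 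Everything else is a matter of assembling the cited facts into the stated cycle of implications.
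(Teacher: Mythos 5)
Your architecture coincides with the paper's: the implication \eqref{item:Zariskidense:properevenSL2} $\Rightarrow$ \eqref{item:Zariskidense:Zariskidense} via Theorem~\ref{theorem:any_sl2_geven}, the use of Fact~\ref{fact:Benoist} for \eqref{item:Zariskidense:ZariskidenseFree} and \eqref{item:Zariskidense:nonva_disconti}, Proposition~\ref{proposition:sl2b} for \eqref{item:Zariskidense:LieSL2} $\Rightarrow$ \eqref{item:Zariskidense:ba}, and Theorem~\ref{theorem:bspansevensl2} for \eqref{item:Zariskidense:ba} $\Rightarrow$ \eqref{item:Zariskidense:evenLieSL2} are exactly the paper's steps. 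The gap is in your ``dictionary'': to pass from Corollary~\ref{cor:sl_2-proper} (properness $\Leftrightarrow$ $\rho(A_0)\notin W\mathfrak{a}_\mathfrak{h}$) to conditions \eqref{item:Zariskidense:LieSL2} and \eqref{item:Zariskidense:evenLieSL2} (which only assert $\rho(A_0)\notin\mathfrak{a}_\mathfrak{h}$) you need the nontrivial containment $W\mathfrak{a}_\mathfrak{h}\cap\mathfrak{a}_+\subset\mathfrak{a}_\mathfrak{h}$, or at least its restriction to $\mathfrak{b}_+$ --- precisely the paper's Lemma~\ref{lemma:positivity}. You assert $W\mathfrak{a}_\mathfrak{h}\cap\mathfrak{b}_+=\mathfrak{a}_\mathfrak{h}\cap\mathfrak{b}_+$ ``for a symmetric space'', but the justification you offer does not prove it: the claim that $\mathfrak{a}_\mathfrak{h}\subset\mathfrak{a}_+$ follows from the choice of ordered basis is false ($\mathfrak{a}_\mathfrak{h}$ is a linear subspace, so it contains $\pm A$ and cannot lie in a closed Weyl chamber unless all restricted roots vanish on it), and the identity $\mu(H)=W\mathfrak{a}_\mathfrak{h}\cap\mathfrak{a}_+$ holds for every reductive subgroup and carries no information singling out $\mathfrak{a}_\mathfrak{h}$ inside its $W$-orbit. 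Without this containment the implications \eqref{item:Zariskidense:evenLieSL2} $\Rightarrow$ \eqref{item:Zariskidense:properevenSL2} and \eqref{item:Zariskidense:LieSL2} $\Rightarrow$ \eqref{item:Zariskidense:properSL2} are unproved, and your cycle does not close.

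Moreover, this containment is not a formality; it genuinely uses that $\mathfrak{h}$ is symmetric. For example, for $\mathfrak{g}=\mathfrak{sl}(3,\R)$ and the reductive but non-symmetric subalgebra $\mathfrak{h}\simeq\mathfrak{sl}(2,\R)$ attached to a simple restricted root, one has $\mathfrak{a}_\mathfrak{h}=\R\,\diag(1,-1,0)$, and the element $\diag(1,0,-1)$ lies in $W\mathfrak{a}_\mathfrak{h}\cap\mathfrak{b}_+$ but not in $\mathfrak{a}_\mathfrak{h}$. The paper closes this gap with Lemma~\ref{lemma:positivity}, proved via Oshima--Sekiguchi: for a symmetric pair, $\Sigma|_{\mathfrak{a}_\mathfrak{h}}\smallsetminus\{0\}$ is a root system whose Weyl group is induced by the elements of $W$ preserving $\mathfrak{a}_\mathfrak{h}$, and $\mathfrak{a}_+\cap\mathfrak{a}_\mathfrak{h}$ is a fundamental domain for its action; comparing fundamental domains gives $\mathfrak{a}_+\cap W\mathfrak{a}_\mathfrak{h}=\mathfrak{a}_+\cap\mathfrak{a}_\mathfrak{h}$. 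You must supply this (or an equivalent) argument where you invoke the dictionary; the remainder of your proposal, including the main step \eqref{item:Zariskidense:properevenSL2} $\Rightarrow$ \eqref{item:Zariskidense:Zariskidense}, is correct and follows the paper.
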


In our proof of Theorem \ref{theorem:genZariskidenseSurface_on_symm}, 
we apply the following lemma:

\begin{lemma}\label{lemma:positivity}
In the setting above, 
\[
\mathfrak{a}_+ \cap W \cdot \mathfrak{a}_{\mathfrak{h}} = \mathfrak{a}_+ \cap \mathfrak{a}_{\mathfrak{h}}.
\]
In particular, $\mathfrak{b}_+ \not \subset W \mathfrak{a}_\mathfrak{h}$ if and only if $\mathfrak{b}_+ \not \subset \mathfrak{a}_\mathfrak{h}$.
\end{lemma}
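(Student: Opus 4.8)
The plan is to establish the displayed equality $\mathfrak{a}_+ \cap W\cdot\mathfrak{a}_\mathfrak{h} = \mathfrak{a}_+ \cap \mathfrak{a}_\mathfrak{h}$; the ``in particular'' clause is then formal, since $\mathfrak{b}_+\subset\mathfrak{a}_+$ gives $\mathfrak{b}_+\cap W\mathfrak{a}_\mathfrak{h}=\mathfrak{b}_+\cap(\mathfrak{a}_+\cap W\mathfrak{a}_\mathfrak{h})=\mathfrak{b}_+\cap(\mathfrak{a}_+\cap\mathfrak{a}_\mathfrak{h})=\mathfrak{b}_+\cap\mathfrak{a}_\mathfrak{h}$, whence $\mathfrak{b}_+\subset W\mathfrak{a}_\mathfrak{h}\iff\mathfrak{b}_+\subset\mathfrak{a}_\mathfrak{h}$. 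In the main equality the inclusion ``$\supseteq$'' is trivial as $1\in W$, so the whole content is: every $W$-orbit that meets $\mathfrak{a}_\mathfrak{h}$ meets the closed chamber $\mathfrak{a}_+$ only inside $\mathfrak{a}_\mathfrak{h}$.

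First I would fix the structural picture. Let $\tau$ be the involution of $G$ with $H$ open in $G^{\tau}$, chosen with $\theta\tau=\tau\theta$. Using that $\mathfrak{a}_\mathfrak{h}$ is maximal abelian in $\mathfrak{p}_\mathfrak{h}$, one checks that the ambient $\mathfrak{a}$ is $\tau$-stable and that $\mathfrak{a}^{\tau}=\mathfrak{a}\cap\mathfrak{p}_\mathfrak{h}=\mathfrak{a}_\mathfrak{h}$. Write $\bar\Sigma:=\{\,\alpha|_{\mathfrak{a}_\mathfrak{h}}\mid \alpha\in\Sigma,\ \alpha|_{\mathfrak{a}_\mathfrak{h}}\neq 0\,\}$ for the restricted root system $\Sigma(\mathfrak{g},\mathfrak{a}_\mathfrak{h})$ and $\bar\Sigma_+:=\{\,\alpha|_{\mathfrak{a}_\mathfrak{h}}\mid \alpha\in\Sigma_+,\ \alpha|_{\mathfrak{a}_\mathfrak{h}}\neq 0\,\}$. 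Because $\Sigma_+$ is the positive system attached to the ordered basis $\mathcal{B}_\mathfrak{g}$ whose initial segment $\mathcal{B}_\mathfrak{h}$ spans $\mathfrak{a}_\mathfrak{h}$, the set $\bar\Sigma_+$ is a positive system of $\bar\Sigma$; and since $\alpha(A)=0$ for $\alpha|_{\mathfrak{a}_\mathfrak{h}}=0$ and $A\in\mathfrak{a}_\mathfrak{h}$, we get $\mathfrak{a}_+\cap\mathfrak{a}_\mathfrak{h}=\{\,A\in\mathfrak{a}_\mathfrak{h}\mid \langle A,\bar\alpha\rangle\geq 0 \text{ for all } \bar\alpha\in\bar\Sigma_+\,\}$, i.e. $\mathfrak{a}_+\cap\mathfrak{a}_\mathfrak{h}$ is exactly the closed dominant Weyl chamber of $(\bar\Sigma,\bar\Sigma_+)$.

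The key input is that the normalizer $N_W(\mathfrak{a}_\mathfrak{h})=\{w\in W\mid w\mathfrak{a}_\mathfrak{h}=\mathfrak{a}_\mathfrak{h}\}$ maps, by restriction, \emph{onto} the Weyl group $W(\bar\Sigma)$ of the restricted root system: this is the statement that $N_K(\mathfrak{a}_\mathfrak{h})/Z_K(\mathfrak{a}_\mathfrak{h})$ realizes $W(\mathfrak{g},\mathfrak{a}_\mathfrak{h})$, which holds here precisely because $(G,H)$ is symmetric and $\mathfrak{a}_\mathfrak{h}$ is maximal abelian in $\mathfrak{p}_\mathfrak{h}$ (so that $\mathfrak{a}_\mathfrak{h}$ is maximally split for the pair), and which I would cite from the structure theory of reductive symmetric spaces. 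Granting it, the proof closes at once. Let $B\in\mathfrak{a}_+\cap W\mathfrak{a}_\mathfrak{h}$ and write $B=wA$ with $w\in W$, $A\in\mathfrak{a}_\mathfrak{h}$. Since $W(\bar\Sigma)$ acts transitively on the chambers of $\mathfrak{a}_\mathfrak{h}$, there is, by the surjectivity just cited, an element $v\in N_W(\mathfrak{a}_\mathfrak{h})$ with $vA\in\mathfrak{a}_+\cap\mathfrak{a}_\mathfrak{h}$; in particular $vA\in\mathfrak{a}_+$ and $vA\in W\cdot A=W\cdot B$. As a $W$-orbit in $\mathfrak{a}$ meets $\mathfrak{a}_+$ in a single point, $B=vA\in\mathfrak{a}_\mathfrak{h}$. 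Hence $\mathfrak{a}_+\cap W\mathfrak{a}_\mathfrak{h}\subseteq\mathfrak{a}_\mathfrak{h}$, and intersecting with $\mathfrak{a}_+$ together with the trivial reverse inclusion gives the equality.

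The main obstacle is exactly the key input: the surjectivity of $N_W(\mathfrak{a}_\mathfrak{h})$ onto $W(\bar\Sigma)$. This is where the hypothesis that $G/H$ is symmetric, and that $\mathfrak{a}_\mathfrak{h}$ is \emph{maximal} in $\mathfrak{p}_\mathfrak{h}$, genuinely enters — the identity of the lemma fails for a non-maximal $\tau$-fixed subspace. If one prefers a self-contained argument instead of a citation, it can be supplied by the standard construction: for each restricted root $\bar\alpha\in\bar\Sigma$ one builds, $\theta$-equivariantly, an $\mathfrak{sl}_2$-triple inside the corresponding $\mathfrak{a}_\mathfrak{h}$-weight spaces of $\mathfrak{g}$ and exponentiates it to an element of $N_K(\mathfrak{a}_\mathfrak{h})$ inducing $s_{\bar\alpha}$ on $\mathfrak{a}_\mathfrak{h}$; the reflections so produced generate an image of $N_W(\mathfrak{a}_\mathfrak{h})$ containing all of $W(\bar\Sigma)$, and (since $N_W(\mathfrak{a}_\mathfrak{h})$ preserves $\bar\Sigma$) equal to it.
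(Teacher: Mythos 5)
Your proof is correct and takes essentially the same route as the paper's: you identify $\mathfrak{a}_+\cap\mathfrak{a}_{\mathfrak{h}}$ with the closed dominant chamber of the restricted root system on $\mathfrak{a}_{\mathfrak{h}}$, invoke the fact that its Weyl group is induced by elements of $W$ preserving $\mathfrak{a}_{\mathfrak{h}}$ — precisely the input the paper cites from \cite{OshimaSekiguchi84} (applied to the associated pair) — and conclude by the uniqueness of the dominant representative of each $W$-orbit.
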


\begin{proof}[of Lemma \ref{lemma:positivity}]
    Our goal is to show that 
    \[
    \mathfrak{a}_+ \cap W \cdot \mathfrak{a}_\mathfrak{h} \subset \mathfrak{a}_\mathfrak{h}.
    \]
    Without loss of the generality, one can assume that $\mathfrak{g}$ is semisimple.
    By applying the arguments in \cite[Sections 2 and 3]{OshimaSekiguchi84} for the associated pair of  $(\mathfrak{g},\mathfrak{h})$, 
    we see that 
    \[
    \Sigma_\mathfrak{h} := \{ \alpha|_{\mathfrak{a}_\mathfrak{h}} \in \mathfrak{a}_\mathfrak{h}^\vee \mid \alpha \in \Sigma \} \smallsetminus \{ 0 \}
    \] 
    forms a root system in $\mathfrak{a}_{\mathfrak{h}}^\vee$.
    We write $W_{\mathfrak{h}}$ for the Weyl group of $\Sigma_\mathfrak{h}$ acting on $\mathfrak{a}_{\mathfrak{h}}$,
    and define the subgroup $W'$ of $W$ by 
    \[
    W' := \{ w \in W \mid w \mathfrak{a}_{\mathfrak{h}} \subset \mathfrak{a}_{\mathfrak{h}} \}.
    \]
    Then by applying \cite[Lemma (7.2)(ii)]{OshimaSekiguchi84} for the associated pair $(\mathfrak{g},\mathfrak{h}^{a})$, we have  \begin{align}
    W'|_{\mathfrak{a}_{\mathfrak{h}}} = W_\mathfrak{h}. \label{eq:W'Wh}
    \end{align}
    Let us denote by $(\Sigma_{\mathfrak{h}})_+$ the positive system of $\Sigma_\mathfrak{h}$ with respect to the ordered basis $\mathcal{B}_{\mathfrak{h}}$ of $\mathfrak{a}_{\mathfrak{h}}$.
    Then 
    \[
    (\Sigma_{\mathfrak{h}})_+ = \{ \alpha|_{\mathfrak{a}_\mathfrak{h}} \mid \alpha \in \Sigma_+ \} \smallsetminus \{ 0 \}.
    \]
    We put  
    \[
    (\mathfrak{a}_{\mathfrak{h}})_+ := \{ A \in \mathfrak{a}_{\mathfrak{h}} \mid \langle A, \xi \rangle \geq 0 \text{ for any } \xi \in (\Sigma_{\mathfrak{h}})_+ \}.
    \] 
    Then $(\mathfrak{a}_{\mathfrak{h}})_+$ is a fundamental domain of the $W_{\mathfrak{h}}$-action on $\mathfrak{a}_{\mathfrak{h}}$, and 
    \begin{align}
        (\mathfrak{a}_{\mathfrak{h}})_+ = \mathfrak{a}_+ \cap \mathfrak{a}_{\mathfrak{h}}. \label{eq:ahpositive}
    \end{align}
    
    Let us take any $A \in \mathfrak{a}_{\mathfrak{h}}$.
    Since $\mathfrak{a}_+$ is a fundamental domain of the $W$-action on $\mathfrak{a}$, 
    there uniquely exists $A' \in \mathfrak{a}_+ \cap W \cdot A$.
    We only need to show that $A' \in \mathfrak{a}_\mathfrak{h}$.
    Recall that $(\mathfrak{a}_{\mathfrak{h}})_+$ is also a fundamental domain of the $W_{\mathfrak{h}}$-action on $\mathfrak{a}_{\mathfrak{h}}$, 
    one can find $A'' \in (\mathfrak{a}_{\mathfrak{h}})_+ \cap W_{\mathfrak{h}} \cdot A$.
    By \eqref{eq:W'Wh} and \eqref{eq:ahpositive}, 
    we see that $A'' \in \mathfrak{a}_+ \cap W \cdot A$.
    By the uniqueness of $A'$ above, we have $A' = A''$, 
    and hence $A' \in \mathfrak{a}_\mathfrak{h}$.
\end{proof}

Let us give a proof of 
Theorem \ref{theorem:genZariskidenseSurface_on_symm}
as follows:

\begin{proof}[of Theorem  \ref{theorem:genZariskidenseSurface_on_symm}]
Our strategy for the proof is the following:
\[
\xymatrix{
\eqref{item:Zariskidense:properSL2} \ar@{<=>}[d] \ar@{<=}[r] & 
\eqref{item:Zariskidense:properevenSL2} \ar@{<=>}[d] \ar@{=>}[r] & 
\eqref{item:Zariskidense:Zariskidense} \ar@{=>}[d] \\
 \eqref{item:Zariskidense:LieSL2} \ar@{=>}[rd] \ar@{<=}[r] & 
 \eqref{item:Zariskidense:evenLieSL2} \ar@{<=}[d] & 
 \eqref{item:Zariskidense:nonva_disconti} \ar@{=>}[ld] \ar@{<=>}[r] & \eqref{item:Zariskidense:ZariskidenseFree} \\
 & \eqref{item:Zariskidense:ba}.
}
\]

The implications 
\eqref{item:Zariskidense:properevenSL2}
$\Rightarrow$
\eqref{item:Zariskidense:properSL2} 
and 
\eqref{item:Zariskidense:evenLieSL2}
$\Rightarrow$
\eqref{item:Zariskidense:LieSL2}
are both trivial.
The equivalences 
\eqref{item:Zariskidense:properSL2} 
$\Leftrightarrow$
\eqref{item:Zariskidense:LieSL2}
and 
\eqref{item:Zariskidense:evenLieSL2}
$\Leftrightarrow$
\eqref{item:Zariskidense:properevenSL2}
follows from Corollary  \ref{cor:sl_2-proper} and Lemma \ref{lemma:positivity}.
The equivalence 
 \eqref{item:Zariskidense:ZariskidenseFree}
$\Leftrightarrow$ 
 \eqref{item:Zariskidense:nonva_disconti}
comes from Fact \ref{fact:Benoist}.
As a direct consequence of Theorem \ref{theorem:any_sl2_geven},  
we have 
\eqref{item:Zariskidense:properevenSL2} $\Rightarrow$ \eqref{item:Zariskidense:Zariskidense}.
The implication
\eqref{item:Zariskidense:Zariskidense}
$\Rightarrow$
\eqref{item:Zariskidense:nonva_disconti}
follows from the fact that the surface group $\pi_1(\Sigma_g)$ is not virtually abelian.
Proposition \ref{proposition:sl2b} shows 
\eqref{item:Zariskidense:LieSL2}
$\Rightarrow$
\eqref{item:Zariskidense:ba}.
By combining Fact \ref{fact:Benoist} with Lemma \ref{lemma:positivity}, we also have 
\eqref{item:Zariskidense:nonva_disconti}
$\Rightarrow$
\eqref{item:Zariskidense:ba}.
Finally, the implication 
\eqref{item:Zariskidense:ba}
$\Rightarrow$
\eqref{item:Zariskidense:evenLieSL2}
follows from 
Theorem \ref{theorem:bspansevensl2}.
\end{proof}

\subsection{Remarks on non-symmetric cases}

In this subsection, we construct a homogeneous space $G/H$ of reductive type with simple $G$ satisfying the following two conditions:
\begin{enumerate}
    \item $G/H$ admits some proper (non-even) $SL(2,\R)$-actions, but 
    \item $G/H$ does not admit proper even $SL(2,\R)$-actions.
\end{enumerate}
Recall that for symmetric $G/H$, 
such examples do not exist by 
Theorem \ref{theorem:genZariskidenseSurface_on_symm}.

We take $G$ as $SL(5,\R)$ and 
$H$ the analytic subgroup corresponding to 
the abelian subalgebra $\mathfrak{h}$ of $\mathfrak{g}$ defined by 
\[
\mathfrak{h} := \mathrm{span}_{\R} \{ \diag(2,-2,0,0,0), \diag(4,2,0,-2,-4) \}.
\]

    \begin{tabular}{c|c|c}
    \hline 
    Symbol & even or non-even & $\rho(A_0) \in \mathfrak{a}_+$ \\
    \hline \hline 
    $[5]$ & even & $\diag(4,2,0,-2,-4)$ \\
    $[4,1]$ & non-even & $\diag(3,1,0,-1,-3)$ \\
    $[3,2]$ & non-even & $\diag(2,1,0,-1,-2)$ \\
    $[3,1^2]$ & even & $\diag(2,0,0,0,-2)$ \\
    $[2^2,1]$ & non-even & $\diag(1,1,0,-1,-1)$ \\
    $[2,1^3]$ & non-even & $\diag(1,0,0,0,-1)$ \\ 
    \hline
    \end{tabular}

There exist only two types of even homomorphisms $\rho\colon \mathfrak{sl}(2,\R) \rightarrow \mathfrak{sl}(5,\R)$ corresponding to the partitions $[5]$ and $[3,1^2]$.
Since 
\begin{align*}
    \diag(4,2,0,-2,-4), \diag(2,0,0,0,-2) \in W \cdot \mathfrak{a}_{\mathfrak{h}}, 
\end{align*}
the $SL(2,\R)$-actions on $G/H$ via $[5]$ and $[3,1^2]$ are both non-proper.
Therefore, there do not exist proper even $SL(2,\R)$-actions on $G/H$ in this case.

One can see that 
\begin{align*}
    \diag(3,1,0,-1,-3), \diag(1,1,0,-1,-1) \not \in W \cdot \mathfrak{a}_{\mathfrak{h}}.
\end{align*}
Thus $G/H$ admits two types of proper non-even $SL(2,\R)$-actions via Lie algebra homomorphisms corresponding to $[4,1]$ and $[2^2,1]$.

The example of $G/H$ above 
gives a counterexample of Theorem \ref{theorem:genZariskidenseSurface_on_symm} 
for the case where $G/H$ is not a symmetric space.
However, this does not mean that $G/H$ gives a counterexample of Theorem \ref{theorem:ZariskidenseSurface_on_symm}.
In fact, the following holds: 

\begin{proposition}
Zariski-dense discontinuous surface groups for the above $G/H$ exist.
\end{proposition}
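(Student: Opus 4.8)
The plan is to exploit the special feature that here $G=SL(5,\R)$, combining a proper non-even $SL(2,\R)$-action with Kassel's stability theorem and the deformation theorem of Kim--Pansu. Concretely, I would take $\rho\colon SL(2,\R)\rightarrow G$ to be the Lie group homomorphism corresponding to the partition $[4,1]$, so that $\rho(A_0)=\diag(3,1,0,-1,-3)\in\mathfrak{a}_+$; since we have already checked that $\rho(A_0)\notin W\cdot\mathfrak{a}_\mathfrak{h}$, the $SL(2,\R)$-action on $G/H$ via $\rho$ is proper by Corollary~\ref{cor:sl_2-proper}. (The homomorphism corresponding to $[2^2,1]$ would serve equally well.) Then I would fix a torsion-free discrete subgroup $\Gamma_g$ of $SL(2,\R)$ isomorphic to $\pi_1(\Sigma_g)$, with genus $g$ taken large enough for the application of Kim--Pansu below; the group $\Gamma_g$ acts properly discontinuously on $G/H$ via $\rho$.

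Next I would apply Kassel's stability theorem (Fact~\ref{fact:stability-for-properness}) with $L=SL(2,\R)$ to produce a neighborhood $W$ of $\rho|_{\Gamma_g}$ in $\Hom(\Gamma_g,G)$ such that every $\rho'\in W$ is discrete and faithful and such that $\rho'(\Gamma_g)$ is a discontinuous group for $G/H$. It then suffices to find a single $\rho'\in W$ whose image is Zariski-dense in $SL(5,\R)$: such a $\rho'(\Gamma_g)$ is the desired Zariski-dense discontinuous surface group for $G/H$.

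The decisive step is that $SL(5,\R)$ is classical simple and non-Hermitian, so the theorem of Kim--Pansu \cite[Theorem~1]{KIMPAN15} applies to $\rho|_{\Gamma_g}$: for $g$ sufficiently large this representation can be deformed into a Zariski-dense representation of $\Gamma_g$ in $SL(5,\R)$. As this deformation can be chosen arbitrarily close to $\rho|_{\Gamma_g}$, I would take it small enough that the resulting Zariski-dense representation $\rho'$ lies in $W$; then $\rho'(\Gamma_g)$ finishes the argument.

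The point I expect to be the main obstacle---and the reason a separate argument is needed at all---is that Theorem~\ref{theorem:any_sl2_geven} (equivalently Theorem~\ref{theorem:deformation-zariski-closure}) is \emph{not} sufficient for this $G/H$. For the $\rho$ above one computes $\sigma(\rho)=\exp(\pi\sqrt{-1}\,\rho(A_0))=\diag(-1,-1,1,-1,-1)$, so $G^\rho_\even$ (Definition~\ref{intro_def:rho-G-g}) is the \emph{proper} subgroup $S(GL(4,\R)\times GL(1,\R))$ of $SL(5,\R)$, and the small deformations furnished by that theorem have Zariski closure exactly $G^\rho_\even$, never all of $G$. What rescues the proposition is precisely the extra input of Kim--Pansu, available because $G$ is a special linear group; the only genuinely delicate point in invoking it is keeping the Kim--Pansu deformation inside the stability neighborhood $W$, which is guaranteed by the smallness (``flexibility'') of their construction.
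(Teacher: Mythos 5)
Your proposal is correct and follows essentially the same route as the paper: the paper's proof also takes one of the non-even homomorphisms $[4,1]$ or $[2^2,1]$, assumes $g$ large (explicitly $g\geq 2(\dim G)^2$), and combines Kim--Pansu \cite[Theorem 1]{KIMPAN15} with Kassel's stability theorem (Fact~\ref{fact:stability-for-properness}) to get a small deformation $\rho'$ that is discrete, faithful, properly discontinuous on $G/H$, and Zariski-dense in $G$. Your added observation that $G^{\rho}_{\even}$ is a proper subgroup here, so Theorem~\ref{theorem:any_sl2_geven} cannot yield Zariski-density and the Kim--Pansu input is genuinely needed, matches the paper's reason for treating this example separately.
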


\begin{proof}
Let $\rho\colon SL(2,\R)\rightarrow G$ be one of the two non-even Lie group homomorphisms corresponding to $[4,1]$ and $[2^2,1]$, 
and $\Gamma_{g}$ a discrete subgroup of $SL(2,\R)$ isomorphic to the surface group $\pi_{1}(\Sigma_{g})$
of genus $g$. Assume $g \geq 2(\dim G)^{2}$.
Then by combining a result of Kim--Pansu (\cite[Theorem 1]{KIMPAN15}) with Fact \ref{fact:stability-for-properness} (stability for proper discontinuity), we obtain $\rho'\in \Hom(\Gamma_{g},G)$ sufficiently close to $\rho|_{\Gamma_{g}}$ such that the $\Gamma_{g}$-action on $G/H$ via $\rho'$ is properly discontinuous and that $\rho'(\Gamma_{g})$ is Zariski-dense in $G$.
\end{proof}

We also note that, related to Theorem  \ref{theorem:ZariskidenseSurface_on_symm}, 
the following problems are still open:
\begin{problem}
    Let $G/H$ be a homogeneous space of reductive type:
    \begin{enumerate}
        \item
        If there exists a discontinuous group $\Gamma$ for $G/H$ isomorphic to a surface group,
        then can we choose $\Gamma$ to be Zariski-dense in $G$?
        \item
        If there exists a non virtually abelian discontinuous group $\Gamma$ for $G/H$,
        then can we choose $\Gamma$ to be isomorphic to a surface group and Zariski-dense in $G$?
    \end{enumerate}
\end{problem}

\section{Example}
\label{section:examples}

In this section, as an example of Theorem~\ref{theorem:ZariskidenseSurface_on_symm}, 
let us consider the classical symmetric space $G/H:=SU(p,q)/U(p,q-1)$ for $p\geq q$. 
We explicitly give two Lie group homomorphisms from $SL(2,\R)$ to $G$ which induce proper $SL(2,\R)$-actions on $G/H$, 
determine whether the two homomorphisms are even or non-even in the sense of Definition~\ref{def:even-hom},
and apply Theorem~\ref{theorem:any_sl2_geven}.
Note that if $p<q$, then no infinite discontinuous groups  exist (the Calabi--Markus phenomenon). 

To apply the properness criterion (Corollary \ref{cor:sl_2-proper}), 
let us realize the symmetric pair $(SU(p,q),U(p,q-1))$
so that their maximal split abelian subspaces have simple forms.
For $k\in \N$, we denote by $I_k$ and $O_k$ the identity matrix and the zero matrix of size $k$, respectively. 
For non-negative integers $p\geq q$, we consider the matrix 
\[
B_{p,q}
:=\begin{pmatrix} 
 & & I'_q\\
 & I_{p-q} & \\
 I'_q & & 
\end{pmatrix}
\text{ where } 
I'_q=\begin{pmatrix}
& & 1\\
& \iddots & \\
1& & \end{pmatrix}\in M(q,\R).
\]
Let $\{e_1,\ldots, e_n\}$ be the standard basis of $\C^n$ ($n=p+q$).
Note that the matrix $B_{p,q}$ defines a hermitian form $h_{p,q}$ of signature $(p,q)$ on $\C^n$ 
with $h_{p,q}(e_1-e_n,e_1-e_n)<0$. 
We realize the symmetric pair $(SU(p,q),U(p,q-1))$ as the following pair $(G,H)$:
\begin{align*}
    G&:=\{g\in SL(p+q,\C) \mid g^{*}B_{p,q} g= B_{p,q}\}, \\
    H&:=\{ g\in G \mid g (e_1-e_n)\in \C (e_1-e_n)\},  
\end{align*}
where $g^*:=\trans \bar{g}$ for a complex matrix $g$. 
Then the corresponding Lie algebras are given as follows:
\begin{align*}
    \mathfrak{g}&=\{ X\in \mathfrak{sl}(p+q,\C)\mid X^{*}B_{p,q}+B_{p,q} X=0\},\\
    \mathfrak{h}&=\{ X\in \mathfrak{g}\mid  X(e_1-e_n) \in \C(e_1-e_n)\}. 
\end{align*}
We can take maximal split abelian subspaces of $\mathfrak{g}$ and $\mathfrak{h}$ as follows:
\begin{align*}
    \mathfrak{a}&=\{ \diag(a_1,\ldots, a_q,\underbrace{0,\ldots, 0}_{p-q}, -a_q,\ldots, -a_1) \in \mathfrak{sl}(p+q,\R) \}, \\
    \mathfrak{a}_{\mathfrak{h}}&=
    \{  \diag(a_1,\ldots, a_q,0, \ldots, 0,-a_q,\ldots, -a_1)\in \mathfrak{a}\mid a_1=0\}.
\end{align*}
The Weyl group $W=W(\mathfrak{g},\mathfrak{a})$ is the group of permutations and sign changes of $\{a_1,\ldots, a_q\}$. 

Let $\phi\colon \mathfrak{g}\rightarrow \mathfrak{sl}(p+q,\C)$ be the natural inclusion.
Now we explicitly define two Lie algebra homomorphisms 
$\rho_{1},\rho_{2}\colon\mathfrak{sl}(2,\R)\rightarrow \mathfrak{g}(\simeq \mathfrak{su}(p,q))$ 
such that the representations $\phi\circ\rho_{1},\phi\circ\rho_{2}$ of $\mathfrak{sl}(2,\R)$ 
correspond to the partitions $[2^{q},1^{p-q}],[2q+1,1^{p-q-1}]$, respectively.
Let $\{h,e,f\}$ be the standard $\mathfrak{sl}(2,\R)$-triple:
\[
h=A_{0}=\begin{pmatrix}1& 0\\ 0 & -1\end{pmatrix},\ 
e=\begin{pmatrix}0& 1\\ 0 & 0\end{pmatrix},\ 
f=\begin{pmatrix} 0 & 0 \\ 1& 0\end{pmatrix}.
\]
Then we put
\begin{align*}
   \rho_1(h)&:=\diag(\underbrace{1,1,\ldots,1}_{q}, \underbrace{0,\ldots, 0}_{p-q}, \underbrace{-1, \ldots, -1}_{q}),\\
   \rho_1(e)&:=
\sqrt{-1}\begin{pmatrix} 
 & & I_q\\
 & O_{p-q} & \\
 O_{q} & & 
\end{pmatrix},\ \rho_1(f):=\rho_1(e)^*.
\end{align*}
When $p\geq q+1$, we put
\begin{align*}
    \rho_2(h)&:=\diag(2q, 2q-2, \ldots,2,\underbrace{0,\ldots, 0}_{p-q}, -2 ,\ldots,  -2q),\\
    \rho_2(e)&:=
    \begin{pmatrix}
     e(c_1,\ldots, c_q) &  &  \\
                        & O_{p-q-1} &  \\
                        &           & e(c_{q-1},\ldots, c_{1})
    \end{pmatrix}+c_{q} E_{q+1,p+1},\\
    \rho_2(f)&:=\rho_2(e)^*, 
\end{align*}
where $c_k=\sqrt{-1}\sqrt{k(2q+1-k)}$ ($k=1,\ldots, q$), $E_{i,j}$ is the matrix unit, and 
\[ e(d_1,\ldots, d_{m-1}):=
\begin{pmatrix}
 0 &d_1 &   &  \\  
  & \ddots& \ddots  \\
  & & \ddots & d_{m-1}\\
  & &       & 0
\end{pmatrix} \in M(m,\C) \text{ for } m\in\N. 
\]
Then we can check that the matrices $\rho_{i}(h),\rho_{i}(e),\rho_{i}(f)$ belong to $\mathfrak{g}$ for $i=1,2$
and that $\rho_{1}$ and $\rho_{2}$ define Lie algebra homomorphisms from $\mathfrak{sl}(2,\R)$ to $\mathfrak{g}$.
Let us denote by the same symbols the corresponding Lie group homomorphisms from $SL(2,\R)$ to $G$.

\begin{claim}
    Let $G/H$ be as above and $\rho\colon SL(2,\R)\rightarrow G$ one of the two Lie group homomorphisms $\rho_{1}$ and $\rho_{2}$.
    The following assertions hold:
    \begin{enumerate}
    \item
    The $SL(2,\R)$-action on $G/H$ via $\rho$ is proper.
    \item
    We have
    \[
    G^{\rho}_{\even} = 
    \begin{cases}
      S(U(q,q)\times U(p-q)) & (\rho=\rho_{1}), \\
      G & (p\geq q+1,\ \rho=\rho_{2}).
    \end{cases}
    \]
    In particular, we get Table~\ref{table:g_rho_even} which shows whether the Lie group homomorphism $\rho$ is even or non-even.
    \begin{table}[!h]
    \centering
    \caption{even or non-even}
    \begin{tabular}{c|c|c}
        \hline
        $\rho$ &  $p\geq q+1$ & $p=q$ \\
        \hline
        \hline
        $\rho_{1}$ & non-even & even \\
        $\rho_{2}$ & even & undefined \\
        \hline
    \end{tabular}
    \label{table:g_rho_even}
    \end{table}

    \item
    Let $\Gamma_{g}$ be a discrete subgroup of $SL(2,\R)$ isomorphic to the surface group $\pi_{1}(\Sigma_{g})$ of genus $g$.
    If 
    \[
    g\geq \begin{cases}
    2q^{2}+(p-q)^2-1 & (\rho=\rho_{1}), \\
    (p-q)^2+2q-1 & (\rho=\rho_{2}),
    \end{cases}
    \]
    then there exists a discrete and faithful representation $\rho'\in \Hom(\Gamma_g, G)$ sufficiently close to $\rho|_{\Gamma_g}$ such that $\rho'(\Gamma_g)$ is a discontinuous group for $G/H$ whose Zariski closure in $G$ coincides with $G^{\rho}_{\even}$.
    \end{enumerate}
\end{claim}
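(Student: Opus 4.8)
The plan is to prove the three assertions in order: reduce (i) to the properness criterion of Corollary~\ref{cor:sl_2-proper}; reduce (iii) to Theorem~\ref{theorem:any_sl2_geven} after computing the genus bound via Lemma~\ref{lem:lower-bound-of-genus}; and prove (ii) by a direct calculation with the element $\sigma(\rho)$ of Definition~\ref{intro_def:rho-G-g}. For (i), I would apply Corollary~\ref{cor:sl_2-proper} (after conjugating $\rho_i(A_0)$ into $\mathfrak{a}_+$, which affects neither properness nor the $W$-stable set $W\mathfrak{a}_\mathfrak{h}$), reducing properness of the $SL(2,\R)$-action to the condition $\rho_i(A_0)\notin W\mathfrak{a}_\mathfrak{h}$. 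Since $W$ acts on $\mathfrak{a} = \{\diag(a_1,\dots,a_q,0,\dots,0,-a_q,\dots,-a_1)\}$ by permutations and sign changes of $(a_1,\dots,a_q)$ while $\mathfrak{a}_\mathfrak{h} = \{a_1 = 0\}$, the orbit $W\mathfrak{a}_\mathfrak{h}$ is exactly the set of elements of $\mathfrak{a}$ with some vanishing coordinate; the $\mathfrak{a}$-coordinates of $\rho_1(h)$ are $(1,\dots,1)$ and those of $\rho_2(h)$ are $(2q,2q-2,\dots,2)$, none of which vanishes, so the $SL(2,\R)$-action is proper in both cases.

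For (ii), I would compute $\sigma = \exp(\pi\sqrt{-1}\,\rho_i(h))$. Every diagonal entry of $\rho_2(h)$ is an even integer, so $\sigma = I$ when $\rho = \rho_2$; hence its centralizer in $\mathbf{G}$ is all of $\mathbf{G}$, so $G^{\rho_2}_{\even} = G$ and $\rho_2$ is even. For $\rho = \rho_1$ one gets $\sigma = \diag(-I_q, I_{p-q}, -I_q)$, acting as $-1$ on the span $V_-$ of the coordinate vectors indexed by $\{1,\dots,q\}\cup\{p+1,\dots,p+q\}$ and as $+1$ on the span $V_+$ of those indexed by $\{q+1,\dots,p\}$. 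The centralizer of $\sigma$ in $GL(p+q,\C)$ is $GL(V_-)\times GL(V_+)$, and intersecting with $G = SU(p,q)$ additionally requires preserving the restrictions of the Hermitian form to $V_-$ and $V_+$ and having determinant $1$. Reading off $B_{p,q}$, the restricted form on $V_-$ is $\left(\begin{smallmatrix}0 & I'_q \\ I'_q & 0\end{smallmatrix}\right)$ of signature $(q,q)$ and on $V_+$ it is $I_{p-q}$ of signature $(p-q,0)$; hence this centralizer equals $S(U(q,q)\times U(p-q))$. Since the latter is connected and has Lie algebra $\mathfrak{g}^{\rho_1}_{\even}$ by Lemma~\ref{lemma:rho-G-g}, it coincides with $G^{\rho_1}_{\even}$. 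Table~\ref{table:g_rho_even} then follows by comparison with $G$: when $p = q$ the factor $U(p-q)$ is trivial, so $G^{\rho_1}_{\even} = SU(q,q) = G$ and $\rho_1$ is even, whereas for $p\geq q+1$ the inclusion $S(U(q,q)\times U(p-q))\subsetneq SU(p,q)$ is proper, so $\rho_1$ is non-even; and $\rho_2$, defined only for $p\geq q+1$, is even.

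For (iii), since $G/H$ is symmetric, hence of reductive type, and the $SL(2,\R)$-action via $\rho$ is proper by (i), Theorem~\ref{theorem:any_sl2_geven} applies once the genus bound $g\geq\sum_{i\in\N}[\mathfrak{g}:V_{2i+1}]$ is met. By Lemma~\ref{lem:lower-bound-of-genus} this sum equals $\dim_\R\{X\in\mathfrak{g}\mid[\rho(A_0),X]=0\}$, the dimension of the zero eigenspace of $\ad\rho(A_0)$ on $\mathfrak{g}$, which I would compute by a block analysis: such an $X$ has $(j,k)$-entry zero unless the $j$-th and $k$-th diagonal entries of $\rho(A_0)$ agree, and is further cut out inside $\mathfrak{g}$ by $X^*B_{p,q}+B_{p,q}X = 0$ and $\trace X = 0$. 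For $\rho_1$ this gives a free $\mathfrak{gl}(q,\C)$-block (its partner being determined through $B_{p,q}$), a $\mathfrak{u}(p-q)$-block, and one real trace relation, for a total of $2q^2+(p-q)^2-1$; for $\rho_2$, since the relevant diagonal entries of $\rho_2(h)$ are pairwise distinct on the first and last $q$ coordinates, the zero eigenspace contributes only $q$ complex diagonal parameters there (half of them again determined through $B_{p,q}$), a $\mathfrak{u}(p-q)$-block, and one real trace relation, for a total of $2q+(p-q)^2-1$. These match the stated bounds, so Theorem~\ref{theorem:any_sl2_geven} yields a discrete faithful $\rho'\in\Hom(\Gamma_g,G)$ arbitrarily close to $\rho|_{\Gamma_g}$ such that $\rho'(\Gamma_g)$ is a discontinuous group for $G/H$ with $\overline{\rho'(\Gamma_g)}^{\Zariski} = G^{\rho}_{\even}$, the group identified in (ii).

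The main obstacle is bookkeeping rather than anything conceptual: in (ii) one must track precisely how the Hermitian form $B_{p,q}$ pairs the three coordinate blocks cut out by $\sigma$ in order to read off the signatures $(q,q)$ and $(p-q,0)$, and in (iii) the dimension count turns on isolating the single real trace relation correctly. A sign or a dimension could easily slip there, so I would cross-check both computations against the explicit small cases such as $q = 1$ or $p = q$.
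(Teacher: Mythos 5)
Your proposal is correct and follows essentially the same route as the paper: (i) via Corollary~\ref{cor:sl_2-proper} and the observation that $\rho_i(A_0)$ has no vanishing $\mathfrak{a}$-coordinate, (ii) via the explicit computation $\sigma(\rho_1)=\diag(-I_q,I_{p-q},-I_q)$, $\sigma(\rho_2)=I_{p+q}$, and (iii) via Theorem~\ref{theorem:any_sl2_geven} together with the centralizer-dimension count from Lemma~\ref{lem:lower-bound-of-genus}. In fact you supply more detail (the signature bookkeeping for $B_{p,q}$ and the dimension counts $2q^2+(p-q)^2-1$ and $2q+(p-q)^2-1$) than the paper, which simply states the $\sigma$-formulas and cites the relevant results; your computations check out.
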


\begin{proof}
Since we have $\rho_{i}(h)=\rho_{i}(A_0)\not \in W\mathfrak{a}_\mathfrak{h}$ for each $i=1,2$, 
it follows from Corollary~\ref{cor:sl_2-proper} that the $SL(2,\R)$-action on $G/H$ via each $\rho_{i}$ is proper,
which proves the assertion (i). 

The assertion (ii) follows from the formula below 
(see Definition~\ref{intro_def:rho-G-g} for $\sigma(\rho)$):
\[
    \sigma(\rho_1) = 
       \begin{pmatrix}
       -I_{q} &  &  \\
        & I_{p-q} &  \\ 
        &  & -I_{q} 
       \end{pmatrix},\ 
    \sigma(\rho_2) = I_{p+q}.
\]

The assertion (iii) follows from Theorem~\ref{theorem:any_sl2_geven} and Lemma~\ref{lem:lower-bound-of-genus}.
\end{proof}

Finally let us remark the following in the case $p=q$:
\begin{claim}
If $p=q$, the homogeneous space $G/H\simeq SU(p,p)/U(p,p-1)$ admits no proper non-even $SL(2,\R)$-actions. 
\end{claim}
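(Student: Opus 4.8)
The plan is to prove that \emph{every} Lie group homomorphism $\rho\colon SL(2,\R)\rightarrow G$ for which $SL(2,\R)$ acts properly on $G/H$ via $\rho$ is even in the sense of Definition~\ref{def:even-hom}; since $p=q$, this is exactly the assertion. First I would observe that both properness of the $SL(2,\R)$-action and evenness of $\rho$ are unchanged when $\rho$ is replaced by $\Int(g)\circ\rho$ with $g\in G$: properness because $xH\mapsto gxH$ is a homeomorphism of $G/H$ intertwining the two actions, and evenness because $\sigma(\Int(g)\circ\rho)=g\,\sigma(\rho)\,g^{-1}$ (see Definition~\ref{intro_def:rho-G-g}), so that $\mathbf{G}_{\even}^{\Int(g)\circ\rho}=g\,\mathbf{G}_{\even}^{\rho}\,g^{-1}$ and hence $G^{\Int(g)\circ\rho}_{\even}=g\,G^{\rho}_{\even}\,g^{-1}$. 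Since $d\rho(A_{0})$ is hyperbolic in $\mathfrak{g}$, after such a conjugation I may assume $d\rho(A_{0})\in\mathfrak{a}_{+}$, with $\mathfrak{a}$, $\mathfrak{a}_{\mathfrak{h}}$, $\mathfrak{a}_{+}$ as in the explicit realization of $(SU(p,p),U(p,p-1))$ recalled above; in particular $d\rho(A_{0})=\diag(a_{1},\dots,a_{p},-a_{p},\dots,-a_{1})$ with all $a_{i}\in\Z$, because the eigenvalues of $d\rho(A_{0})$ on $\C^{2p}$ are integers by $\mathfrak{sl}(2)$-representation theory.

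Next I would apply the properness criterion. Because $p=q$ there is no ``middle zero block'' in $\mathfrak{a}$, so $\mathfrak{a}_{\mathfrak{h}}=\{\diag(a_{1},\dots,a_{p},-a_{p},\dots,-a_{1})\mid a_{1}=0\}$, and since the Weyl group $W$ acts by signed permutations of $(a_{1},\dots,a_{p})$, the set $W\mathfrak{a}_{\mathfrak{h}}$ is exactly the set of elements of $\mathfrak{a}$ having at least one vanishing coordinate. By Corollary~\ref{cor:sl_2-proper}, the $SL(2,\R)$-action via $\rho$ is proper if and only if $d\rho(A_{0})\notin W\mathfrak{a}_{\mathfrak{h}}$, i.e.\ if and only if every $a_{i}$ is nonzero, equivalently if and only if $0$ is \emph{not} an eigenvalue of $d\rho(A_{0})$ acting on $\C^{2p}$.

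Assume now that the action is proper, so $0$ is not an $A_{0}$-weight of the $\mathfrak{sl}(2,\R)$-module $\C^{2p}$. Every odd-dimensional irreducible $\mathfrak{sl}(2,\R)$-module has $0$ among its $A_{0}$-weights, so $\C^{2p}$ is a direct sum of even-dimensional irreducible $\mathfrak{sl}(2,\R)$-modules; on each of these all $A_{0}$-weights are odd integers, hence every eigenvalue of $d\rho(A_{0})$ on $\C^{2p}$ is odd. Therefore $\sigma(\rho)=\exp(\pi\sqrt{-1}\,d\rho(A_{0}))=-I_{2p}$. As $G=SU(p,p)$ is a real form of $SL(2p,\C)$, the complexification of $\mathbf{G}$ is $SL(2p,\C)$, and $-I_{2p}$ lies in its center; hence the centralizer of $\sigma(\rho)$ in $\mathbf{G}$ is all of $\mathbf{G}$, which is Zariski-connected, so $\mathbf{G}^{\rho}_{\even}=\mathbf{G}$ and therefore $G^{\rho}_{\even}=G\cap\mathbf{G}(\R)=G$. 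Thus $\rho$ is even, and since $\rho$ was arbitrary, $G/H$ admits no proper non-even $SL(2,\R)$-action. (As a consistency check, this matches Table~\ref{table:g_rho_even}, where $\rho_{1}$ is recorded as even when $p=q$.)

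The only step demanding care is the identification of $W\mathfrak{a}_{\mathfrak{h}}$ in the second paragraph: this is precisely where the hypothesis $p=q$ is used, since for $p>q$ the forced zeros in the middle block of $\mathfrak{a}$ mean that $W\mathfrak{a}_{\mathfrak{h}}$ is a \emph{proper} subset of $\{A\in\mathfrak{a}\mid A\text{ has a zero coordinate}\}$, which is exactly why $\rho_{2}$ can be proper and non-even when $p>q$. Once that identification is in place, properness translates into the absence of a zero $A_{0}$-weight on $\C^{2p}$, and the rest is an immediate application of $\mathfrak{sl}(2,\R)$-representation theory together with the description of the center of $SL(2p,\C)$.
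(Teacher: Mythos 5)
Your proof is correct and follows essentially the same route as the paper's: properness via Corollary~\ref{cor:sl_2-proper} forces the absence of a zero $A_0$-weight on $\C^{2p}$ (with $p=q$ used exactly to identify $W\mathfrak{a}_{\mathfrak{h}}$ with the locus of elements of $\mathfrak{a}$ having a vanishing coordinate), hence only even-dimensional $\mathfrak{sl}(2,\R)$-irreducibles occur, so $\sigma(\rho)=-I_{2p}$ and $G^{\rho}_{\even}=G$. The only differences are cosmetic: you argue the contrapositive directly instead of by contradiction, and you spell out the conjugation-invariance of properness and evenness, which the paper leaves implicit.
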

\begin{proof}
Let $\rho\colon SL(2,\R)\rightarrow G$ be a Lie group homomorphism 
such that the $SL(2,\R)$-action on $G/H$ via $\rho$ is proper. 
Then we claim that no odd-dimensional irreducible representations of $SL(2,\R)$ occur in the $2p$-dimensional representation $\phi\circ\rho$,
where $\phi\colon G\rightarrow GL(2p,\C)$ be the natural inclusion.
Assume otherwise for a contradiction.
Then at least two odd-dimensional irreducible representations occur in $\phi\circ\rho$
and thus the zero-eigenspace of $\phi(\rho(A_{0}))$ is
at least two-dimensional.
Replacing $\rho$ by some $G$-conjugate, we assume $\rho(A_{0})\in\mathfrak{a}$.
Then $\rho(A_0)$ is contained in $W\mathfrak{a}_\mathfrak{h}$, which contradicts the properness of 
the $SL(2,\R)$-action on $G/H$ via $\rho$ by Corollary~\ref{cor:sl_2-proper}. Hence no odd-dimensional irreducible representations occur in  $\phi\circ\rho$. Then, we have 
$\sigma(\rho)=-I_{2p}$, and thus $G^\rho_{\even}=G$. 
This proves that $\rho$ is even. 
\end{proof} 

\appendix
\section{Some remarks for Fact \ref{fact:Benoist}}
Fact \ref{fact:Benoist} is established by 
Benoist \cite{Benoist96} for semisimple $G$.
In this appendix, 
for the sake of the completeness, 
we show that  
Fact \ref{fact:Benoist} 
also holds for the case where $G$ is reductive, 
by giving some minor modifications of the arguments in \cite[Section 7]{Benoist96}.

We follow the setting and notation in Section \ref{section:KobayashiBenoist}. 
Noting that $\mathfrak{b}$ is contained in $\mathfrak{a}\cap \mathfrak{g}_{ss}$,
one can prove Fact \ref{fact:Benoist} by combined an argument similar to the proof of \cite[Th\'{e}or\`{e}me 7.5]{Benoist96} with the following two facts: 

\begin{fact}
\label{fact:Benoist_free_necessary}
Let $\Gamma$ be a discrete subgroup of a \reductive~$G$.
If 
\[
\mu(\Gamma) \pitchfork \mathfrak{b}_+ \text{ in } \mathfrak{a}
\]
holds, 
then $\Gamma$ should be virtually abelian,
that is, $\Gamma$ has a finite-index abelian normal subgroup.
\end{fact}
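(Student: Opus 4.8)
The statement is the extension to reductive $G$ of a result of Benoist \cite[Section~7]{Benoist96}, and the plan is to reduce it to his semisimple case by elementary manipulations. First I would record a convenient reformulation of the hypothesis. Since $\iota$ preserves $\mathfrak{a}_+$ and $\iota^2=\mathrm{id}$, for every $g\in G$ the vector $b(g):=\tfrac12(\mu(g)+\iota\mu(g))$ lies in $\mathfrak{b}\cap\mathfrak{a}_+=\mathfrak{b}_+$; taking the inner product on $\mathfrak{a}$ to be $\iota$-invariant and using the standard identity $\mu(g^{-1})=\iota\mu(g)$, one checks
\[
\mathrm{dist}(\mu(g),\mathfrak{b}_+)=\|\mu(g)-b(g)\|=\tfrac12\|\mu(g)-\mu(g^{-1})\|=\mathrm{dist}(\mu(g),\mathfrak{b})
\]
for all $g$. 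Hence ``$\mu(\Gamma)\pitchfork\mathfrak{b}_+$ in $\mathfrak{a}$'' is equivalent to the assertion that there is no sequence $(\gamma_n)$ in $\Gamma$ with $\|\mu(\gamma_n)\|\to\infty$ and $\sup_n\mathrm{dist}(\mu(\gamma_n),\mathfrak{b})<\infty$. So the goal becomes: if $\Gamma$ is not virtually abelian, then I must produce such a ``bad'' sequence.

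Next I would make three reductions. (a) \emph{Reduce to finitely generated $\Gamma$}: a bad sequence for a subgroup is a bad sequence for $\Gamma$; and if $\Gamma$ is not virtually abelian then $\overline{\Gamma}^{\Zariski}$ has non-abelian identity component, so $\Gamma$ contains a finitely generated subgroup with the same Zariski closure, which is therefore again not virtually abelian. (b) \emph{Split off the $\R$-split centre}: write $\mathfrak{a}=\mathfrak{a}_z\oplus\mathfrak{a}_{\mathrm{ss}}$ with $\mathfrak{a}_z=\mathfrak{a}\cap\mathfrak{z}(\mathfrak{g})$ and $\mathfrak{a}_{\mathrm{ss}}=\mathfrak{a}\cap[\mathfrak{g},\mathfrak{g}]$, and recall $\mathfrak{b}\subseteq\mathfrak{a}_{\mathrm{ss}}$ (Section~\ref{section:KobayashiBenoist}). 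The identity component $G^\circ$ factors as the internal direct product $\exp(\mathfrak{a}_z)\times{}^\circ G$, where ${}^\circ G$ (the common kernel of the absolute values of the $\R$-rational characters of $\mathbf{G}$) is reductive with compact centre and shares with $G$ its maximal split abelian subspace $\mathfrak{a}_{\mathrm{ss}}$, Weyl group, opposition involution, and hence $\mathfrak{b}$ and $\mathfrak{b}_+$, with $\mu|_{{}^\circ G}$ as Cartan projection. Replacing $\Gamma$ by a finite-index subgroup (harmless for both the hypothesis and virtual abelianness) and using that a discrete group has finite image in every compact factor, I may assume $\Gamma\subseteq\exp(\mathfrak{a}_z)\times{}^\circ G=\R^k\times{}^\circ G$. (c) Put $\Gamma_S:=\Gamma\cap(\{0\}\times{}^\circ G)$; since the $\R^k$-component is central, $\Gamma_S$ is normal in $\Gamma$ with $\Gamma/\Gamma_S$ abelian (it embeds in $\R^k$ via the $\exp(\mathfrak{a}_z)$-component), and $\Gamma_S$ is discrete in ${}^\circ G$ with $\mu_G|_{\Gamma_S}=\mu_{{}^\circ G}|_{\Gamma_S}$.

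Now I would split into two cases. If $\Gamma_S$ is virtually abelian, pick a finite-index abelian subgroup $A\trianglelefteq\Gamma_S$ that is characteristic in $\Gamma_S$, hence normal in $\Gamma$; then $\Gamma/A$ has the finite normal subgroup $\Gamma_S/A$ with abelian quotient $\Gamma/\Gamma_S$, and a finitely generated group with a finite normal subgroup and abelian quotient is virtually abelian (the conjugation action on the finite normal subgroup has a finite-index kernel whose derived subgroup is finite and central), so $\Gamma$ is virtually abelian and there is nothing to prove. If $\Gamma_S$ is not virtually abelian, pass to its image $\overline{\Gamma}_S$ in the adjoint group ${}^\circ G/Z({}^\circ G)$, which is semisimple of adjoint type with the same $\mathfrak{a}_{\mathrm{ss}},W,\iota,\mathfrak{b}$; as $Z({}^\circ G)$ is compact, $\overline{\Gamma}_S$ is discrete and still not virtually abelian, so by Benoist's theorem for semisimple groups \cite[Section~7]{Benoist96} the relation $\mu(\overline{\Gamma}_S)\pitchfork\mathfrak{b}_+$ fails; a witnessing bad sequence lifts — up to a bounded elliptic correction from the finite kernel — to a bad sequence in $\Gamma_S\subseteq\Gamma$, contradicting the reformulated hypothesis. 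Hence $\Gamma$ is virtually abelian in either case.

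The step I expect to be the essential one is the invocation of Benoist's semisimple result: its substance is producing an unbounded sequence whose Cartan projections stay within a \emph{bounded} — not merely sublinear — distance of $\mathfrak{b}$, which Benoist obtains from the $\iota$-invariance and thickness of the limit cone when $\Gamma$ contains $\R$-regular elements, and from the asymptotics of $\mu$ on powers of a non-central unipotent element otherwise, the latter resting on delicate real-nilpotent-orbit considerations (in the spirit of Proposition~\ref{proposition:sl2b}, that neutral elements of $\mathfrak{sl}(2,\R)$-triples placed in $\mathfrak{a}_+$ lie in $\mathfrak{b}_+$). The remaining ingredients — the distance identity, the finiteness reduction in (a), and the group theory of the first case — are routine, and the only genuinely reductive-specific input is the remark $\mathfrak{b}\subseteq\mathfrak{a}\cap[\mathfrak{g},\mathfrak{g}]$ exploited in reduction (b).
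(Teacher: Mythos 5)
Your reduction has a genuine gap, and it sits in Case~1. The inference you make there --- $A\trianglelefteq\Gamma$ abelian with $\Gamma/A$ virtually abelian, hence $\Gamma$ virtually abelian --- is false as group theory (abelian-by-abelian groups such as the integer Heisenberg group or $\Z\ltimes_A\Z^2$ are not virtually abelian), and, worse, the conclusion you want in that case is simply not true in your setting, so the case cannot be dismissed as vacuous. Concretely, let $G=\R_{>0}\times SL(3,\R)$, an open subgroup of $(\mathbb{G}_m\times SL_3)(\R)$ and hence a real reductive group in the sense of the paper, fix a hyperbolic $A=\begin{pmatrix}2&1\\1&1\end{pmatrix}\in SL(2,\Z)$, and put
\[
\Gamma:=\left\{\left(e^{n},\begin{pmatrix}A^{n}&v\\0&1\end{pmatrix}\right)\ \middle|\ n\in\Z,\ v\in\Z^{2}\right\}.
\]
This is a discrete subgroup of $G$ isomorphic to $\Z\ltimes_{A}\Z^{2}$, finitely generated and \emph{not} virtually abelian, while $\Gamma_S=\Gamma\cap(\{1\}\times SL(3,\R))\cong\Z^{2}$ is abelian (here $A=\Gamma_S$, $\Gamma/A\cong\Z$, exhibiting exactly the faulty step). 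So the case ``$\Gamma_S$ virtually abelian but $\Gamma$ not'' genuinely occurs, and for such $\Gamma$ your scheme produces no unbounded sequence in $\mu(\Gamma)$ at bounded distance from $\mathfrak{b}_+$: Case~2 is unavailable because it needs $\Gamma_S$ non-virtually-abelian to invoke Benoist, and Case~1 proves nothing. Note that the required sequence does exist for this $\Gamma$ --- the unipotent elements of $\Gamma_S$ have Cartan projection $(0,\diag(\log\lambda,0,-\log\lambda))\in\mathfrak{b}_+$ with $\log\lambda\to\infty$, so the hypothesis of the Fact indeed fails --- but detecting it is precisely the analysis of $\mu$ on non-semisimple (here unipotent) elements that constitutes the substance of Benoist's argument; it is not recovered by your group-theoretic case split, which never uses the hypothesis in Case~1.

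Two further remarks. First, in Case~2 the subgroup $\Gamma_S$ is a kernel of a homomorphism from $\Gamma$ to $\R^k$ and need not be finitely generated, so the finitely-generated lemma you quote does not directly give ``$\overline{\Gamma}_S$ still not virtually abelian''; this particular point is repairable using linearity (if the image in the adjoint group were virtually abelian, a Zariski-closure argument shows $\Gamma_S$ is too), unlike the gap above. Second, the paper's own justification of Fact~\ref{fact:Benoist_free_necessary} is of a different nature: it does not reduce the reductive statement to the semisimple one, but observes that Benoist's proof of \cite[Th\'eor\`eme 3.3]{Benoist96} goes through verbatim when the split centre is present (the centre only shifts $\mu$ inside the $(-1)$-eigenspace of $\iota$). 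If you insist on a reduction-style argument, you must genuinely treat the mixed case exhibited above rather than declare it empty.
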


\begin{fact}
\label{fact:Benoist_Cartan_cone}
Assume that a \reductive~$G$ has a non-compact simple factor.
Let us fix any open convex cone $\Omega$ in $\mathfrak{a}_+$ with $\iota(\Omega) \subset \Omega$.
Then there exists a Zariski-dense discrete subgroup $\Gamma$ of $G$ such that 
$\Gamma$ is a non-abelian free group
and $\mu(\Gamma) \subset \Omega \cup \{ 0 \}$.
\end{fact}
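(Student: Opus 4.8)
The plan is to realize $\Gamma$ as a Schottky (ping-pong) group generated by loxodromic elements whose Jordan projections all lie in $\Omega$, adapting the construction of \cite[Section~7]{Benoist96} from the semisimple case to a general \reductive~$G$. Write $\lambda\colon G\to\mathfrak{a}_+$ for the Jordan projection; it is invariant under conjugation, satisfies $\lambda(\exp X)=X$ for $X\in\mathfrak{a}_+$, and $\lambda(g^{-1})=\iota(\lambda(g))$. Since $\Omega$ is open in $\mathfrak{a}$ and contained in $\mathfrak{a}_+$, it lies in the interior of $\mathfrak{a}_+$, so every element of $\Omega$ is $\R$-regular; in particular $\exp(X)$ is loxodromic with $\lambda(\exp X)=X$ for every $X\in\Omega$, and by $\iota(\Omega)\subset\Omega$ one also has $\lambda((\exp X)^{-1})=\iota(X)\in\Omega$.

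First I would fix the generators. Choose $X_0\in\Omega$ so that $g_0:=\exp(X_0)$ generates a Zariski-dense subgroup of a maximal $\R$-split torus of $\mathbf{G}$ (a condition satisfied by a dense set of $X_0\in\Omega$), and for $r$ large set $g_i:=u_i g_0 u_i^{-1} c_i$ for $i=1,\dots,r$, where $u_1=e$, the elements $u_i$ ($i\geq 2$) are chosen generically in $G$, and the elements $c_i$ are chosen generically in a maximal compact subgroup of $G$. The role of the $c_i$ is only to reach the compact simple factors and the anisotropic central torus of $\mathbf{G}$, which are invisible to $\mathfrak{a}$; the split part of the center is reached already through the split torus. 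Since conjugation preserves $\lambda$ and the $c_i$ contribute nothing to $\mathfrak{a}$, one has $\lambda(g_i)=X_0\in\Omega$ and $\lambda(g_i^{-1})=\iota(X_0)\in\Omega$ for every $i$. For a generic choice of $(u_i,c_i)$ and $r$ large, one arranges simultaneously that (a) the attracting and repelling flags of $g_1^{\pm},\dots,g_r^{\pm}$ are in general position, (b) $\langle g_1,\dots,g_r\rangle$ is Zariski-dense in $G$, and (c) each $\overline{\langle g_i\rangle}^{\Zariski}$ is Zariski-connected; (a)--(c) are the standard genericity statements that finitely many generic conjugates of a fixed $\R$-regular element, together with generic elements of the compact part, generate $\mathbf{G}$.

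By the ping-pong lemma there is $n_0$ with the property that, for every integer $n\geq n_0$, the group $\Gamma:=\langle g_1^{\,n},\dots,g_r^{\,n}\rangle$ is free of rank $r$. It is again Zariski-dense: by (c) the subgroup $\overline{\langle g_i^{\,n}\rangle}^{\Zariski}$ has finite index in the connected group $\overline{\langle g_i\rangle}^{\Zariski}$ and hence equals it, and by (b) these subgroups generate $G$. The key input is Benoist's additivity estimate for Cartan projections of Schottky groups \cite[Section~7]{Benoist96}: there is a constant $C=C(g_1,\dots,g_r,n_0)$ such that every $\gamma\in\Gamma$, written in reduced form $\gamma=g_{i_1}^{\,n a_1}\cdots g_{i_k}^{\,n a_k}$ with $a_j\in\Z\smallsetminus\{0\}$ and $i_j\neq i_{j+1}$, satisfies
\[
\bigl\|\mu(\gamma)-v_\gamma\bigr\|\leq C,\qquad v_\gamma:=n\sum_{j=1}^{k}|a_j|\,\varepsilon_j,
\]
where $\varepsilon_j:=X_0$ if $a_j>0$ and $\varepsilon_j:=\iota(X_0)$ if $a_j<0$. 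Since $\Omega$ is a convex cone containing $X_0$ and $\iota(X_0)$, the vector $v_\gamma$ lies in $\Omega$; and since the segment $[X_0,\iota(X_0)]$ is a compact subset of $\Omega$ not containing $0$, there are constants $c_0>0$ and $\delta>0$ such that $\|v_\gamma\|\geq n c_0$ and $\mathrm{dist}\bigl(v_\gamma/\|v_\gamma\|,\,\mathfrak{a}\smallsetminus\Omega\bigr)\geq\delta$ for every $\gamma\neq 1$. Enlarging $n_0$ so that $C/(n_0 c_0)<\delta$, we obtain $\mu(\gamma)\in\Omega$ for all $\gamma\neq 1$, together with $\mu(1)=0$, i.e.\ $\mu(\Gamma)\subset\Omega\cup\{0\}$. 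Finally $\|\mu(\gamma)\|\geq n c_0-C>0$ for $\gamma\neq 1$, so $\Gamma$ moves the base point of $G/K$ by a uniformly positive amount and is therefore discrete; being free of rank $r\geq 2$, it is non-abelian. This gives the required $\Gamma$.

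The main obstacle is the bookkeeping needed to run \cite[Section~7]{Benoist96} for an arbitrary \reductive~$G$ rather than a semisimple one: one must check that the ping-pong construction and the Cartan-additivity estimate hold verbatim (they do---loxodromic dynamics only involves the flag variety of the semisimple part of $G$, and $\mu$ on $G$ differs from $\mu$ on the derived group only in the split-central direction), and one must secure Zariski density in all of $G$. It is the latter that genuinely uses the reductive hypothesis, since compact simple factors and the anisotropic part of the center cannot be reached by loxodromic elements; this is precisely why the generic compact corrections $c_i$ are inserted, while the split center is captured automatically. Once these points are settled, the remaining argument is the same as Benoist's, which is what the appendix means by ``minor modifications''.
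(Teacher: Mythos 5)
Your overall strategy---Benoist's Schottky/ping-pong construction with Jordan projections in $\Omega$ plus an additivity estimate for Cartan projections---is the right one, and it genuinely differs from the paper's route: the paper treats Benoist's semisimple statement as a black box for $G_{ss}$ and then handles the \emph{entire} central torus at once by multiplying each Schottky generator by one central element $a\in\lambda^{-1}(U_z)\cap Z(G)$ chosen so that $\langle a\rangle$ is Zariski-dense in $\mathbf{Z}(\mathbf{G})_{0}$ and $\lambda(a)$ is small enough that $\lambda(\varphi_{ss}(g))+U_{ss}+U_z\subset\Omega$ (Lemma~\ref{lemma:Benoist-Schottky}); you instead redo the construction inside $G$ directly, catching the split centre through the choice of $X_0$ and the compact part through the corrections $c_i$. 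Two of your steps need repair. First, as written the choice of the $c_i$ is wrong: a generic element of a maximal compact subgroup $K$ of $G$ does not commute with $u_i g_0 u_i^{-1}$, so $\lambda(g_i)\neq X_0$ in general---the product of a hyperbolic element and a generic elliptic one need not even be loxodromic (already in $SL(2,\R)$ a hyperbolic matrix times a rotation can be elliptic), so the justification ``the $c_i$ contribute nothing to $\mathfrak{a}$'' fails. What your argument actually requires, and what your parenthetical remark suggests you intend, is that $c_i$ lie in the maximal compact \emph{normal} subgroup (the product of the compact simple factors and the anisotropic part of the central torus), which does commute with $u_i g_0 u_i^{-1}$; then the Jordan decomposition splits and $\lambda(g_i)=X_0$, $\lambda(g_i^{-1})=\iota(X_0)$ hold. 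Even after this correction, your genericity claims (a)--(c) and the Zariski-density of the resulting group (a Goursat-type argument combining density in the noncompact part plus split centre with density in the compact normal part, and connectedness of the cyclic closures so that passing to $n$-th powers is harmless) are exactly where the real work hides; the paper sidesteps all of this by quoting Benoist and adjoining only the central element $a$.

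Second, the additivity estimate you attribute to Benoist, with a constant $C$ independent of the word length, is stronger than what his Section 7 provides: the error is per letter, i.e.\ $\mu(\gamma)\in\sum_{j}|a_j|\,(n\varepsilon_j+U)$ for a fixed compact neighbourhood $U$ of $0$, which is precisely the form of property (iv) in Lemma~\ref{lemma:Benoist-Schottky}; already in rank one the discrepancy between $\mu(\gamma)$ and $n\sum_j|a_j|\varepsilon_j$ grows linearly in the number of syllables. This does not sink the proof: each $n\varepsilon_j+U$ lies in $\Omega$ for $n$ large (equivalently, the relative error is $O(1/n)$ since the main term also grows linearly), so convexity of the cone still gives $\mu(\gamma)\in\Omega$ and $\|\mu(\gamma)\|$ bounded away from $0$, hence discreteness. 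So the proposal is viable after these corrections, but as stated the compact-correction step fails and the quantitative input is misquoted.
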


Fact~\ref{fact:Benoist_free_necessary} in semisimple cases 
can be found in \cite[Th\'{e}or\`{e}me 3.3]{Benoist96} 
and the proof works even for reductive cases.

For semisimple cases, 
Fact~\ref{fact:Benoist_Cartan_cone} 
corresponds exactly to {\cite[Proposition 1.6]{Benoist96}}.
In the rest of this appendix, we focus on Fact~\ref{fact:Benoist_Cartan_cone} in a general reductive situation.

Let us consider the setting of Fact~\ref{fact:Benoist_Cartan_cone}. 
By applying the arguments in \cite[Section 7]{Benoist96} to the semisimple part of $G$, 
one can construct a Zariski-dense discrete free subgroup  $\Gamma$ of the semisimple part.
As we discuss below, 
by considering a small deformation of $\Gamma$ in $G$, 
Fact~\ref{fact:Benoist_Cartan_cone} is reduced to the semisimple case. 

We shall recall the definition of the \emph{Lyapunov projection}
$\lambda\colon G \rightarrow \mathfrak{a}_{+}$ as in \cite[Section 7.1]{Benoist96}. For $g\in G$ let $g=g_{e}g_{h}g_{u}$ be its Jordan decomposition, where $g_{e}$, $g_{h}$, and $g_{u}$ are mutually commutative elements of $G$, 
and are elliptic, hyperbolic, and unipotent, respectively. Then $\lambda(g)$ is a unique element of $\mathfrak{a}_{+}$ 
such that $\exp(\lambda(g))$ is $G$-conjugate to $g_{h}$. Notice $\lambda(g)=\mu(g)$ if $g$ is central in $G$.

For an integer $t\geq 2$, let $F_{t}$ be a free group of rank $t$ on generators $\gamma_{1},\ldots,\gamma_{t}$,
and we put $E_{t}:=\{\gamma_{1},\gamma_{1}^{-1},\ldots,\gamma_{t},\gamma_{t}^{-1}\}$.
To prove Fact \ref{fact:Benoist_Cartan_cone}, we show the following in our setting:
\begin{lemma}
\label{lemma:Benoist-Schottky}
    Assume that a \reductive~$G$ has a non-compact simple factor.
    For any open convex cone $\Omega$ in $\mathfrak{a}_+$ which is stable under the opposition involution $\iota$,
    there exist a compact neighborhood $U$ of $0$ in $\mathfrak{a}$, an integer $t\geq 2$, and 
    a group homomorphism $\varphi\colon F_{t}\rightarrow G$
    satisfying the following properties: 
    \begin{enumerate}
    \item
    $\varphi$ is injective.
    \item
    $\varphi(F_{t})$ is Zariski-dense in $G$.
    \item
    For any $g\in E_{t}$ we have $\lambda(\varphi(g))+U\subset \Omega$.
    \item
    For any reduced word $g=g_{l}\cdots g_{1}$ ($g_{1},\ldots,g_{l}\in E_{t}$), we have
    \[
    \mu(\varphi(g)) \in \sum_{i=1}^{l} (\lambda(\varphi(g_{i}))+U).
    \]
    \end{enumerate}
\end{lemma}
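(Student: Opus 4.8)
The plan is to reduce to the case where $G$ is semisimple, which is contained in the arguments of \cite[Section~7]{Benoist96} (this is the semisimple instance of the lemma, used there to prove Proposition~1.6), and then to pass to a general \reductive~$G$ by a small central twist. Accordingly, let $\mathbf{G}'=[\mathbf{G},\mathbf{G}]$ be the derived subgroup, a connected semisimple $\R$-group, and $\mathbf{Z}$ the radical of $\mathbf{G}$, a central $\R$-torus, so that $\mathbf{G}=\mathbf{Z}\cdot\mathbf{G}'$ with $\mathbf{Z}\cap\mathbf{G}'$ finite; put $G':=G\cap\mathbf{G}'(\R)$ and $Z:=G\cap\mathbf{Z}(\R)$, so $G'$ is a semisimple \reductive~with a non-compact simple factor and $Z$ is central in $G$. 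Write $\mathfrak{g}=\mathfrak{z}\oplus\mathfrak{g}'$ and $\mathfrak{a}=\mathfrak{a}_0\oplus\mathfrak{a}'$ with $\mathfrak{a}_0:=\mathfrak{a}\cap\mathfrak{z}$ and $\mathfrak{a}':=\mathfrak{a}\cap\mathfrak{g}'$. Since every restricted root of $(\mathfrak{g},\mathfrak{a})$ vanishes on $\mathfrak{a}_0$, choosing the Cartan involution, maximal split abelian subspace, and positive system of $G'$ compatibly with those of $G$ we obtain $\mathfrak{a}_+=\mathfrak{a}_0\oplus\mathfrak{a}'_+$, the $KAK$-decomposition of $G'$ embeds into that of $G$, the restrictions of $\mu$ and $\lambda$ to $G'$ are the Cartan and Lyapunov projections of $G'$, and $\iota|_{\mathfrak{a}'}$ is the opposition involution of $G'$. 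I would first check that $\Omega':=\Omega\cap\mathfrak{a}'$ is a nonempty open convex $\iota|_{\mathfrak{a}'}$-stable cone contained in $\mathfrak{a}'_+$: nonemptiness holds because for $v\in\Omega$ the point $\tfrac{1}{2}(v+\iota(v))$ lies in $\Omega$ and is $\iota$-fixed, hence lies in $\mathfrak{b}\subset\mathfrak{a}'$. Applying the semisimple case of the lemma to $G'$ and $\Omega'$ produces an integer $t\geq2$, a compact neighborhood $U'$ of $0$ in $\mathfrak{a}'$, and a homomorphism $\varphi_0\colon F_t\to G'$ satisfying (i)--(iv) with $(\Omega',U')$ in place of $(\Omega,U)$.

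Next I would perturb $\varphi_0$ by a small central character. Choose $v_1\in\mathfrak{z}$ of small norm (how small is fixed below) and generic, so that $\langle\exp(v_1)\rangle$ is Zariski-dense in $\mathbf{Z}$; this is possible because the set of $v\in\mathfrak{z}$ for which $\exp(v)$ lies in a fixed proper algebraic subgroup of $\mathbf{Z}$ meets any neighborhood of $0$ in a proper linear subspace (defined by a nontrivial character of $\mathbf{Z}$), and a countable union of such subspaces cannot cover a neighborhood of $0$. Define $\varphi\colon F_t\to G$ by $\varphi(\gamma_1):=\exp(v_1)\varphi_0(\gamma_1)$ and $\varphi(\gamma_i):=\varphi_0(\gamma_i)$ for $2\leq i\leq t$; since $\exp(v_1)$ is central, $\varphi$ is a homomorphism, and it is a small deformation of the inclusion $\varphi_0(F_t)\hookrightarrow G$. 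Let $w:=\mathrm{pr}_{\mathfrak{a}_0}(v_1)\in\mathfrak{a}_0$ be the $\mathfrak{a}_0$-component of $v_1$. For a reduced word $g=g_l\cdots g_1$ one has $\varphi(g)=\exp(m(g)v_1)\,\varphi_0(g)$, where $m(g)\in\Z$ is the total exponent of $\gamma_1$ in $g$, so $m(g)=\sum_{i=1}^l m(g_i)$ with $m(g_i)\in\{-1,0,1\}$ when $g_i\in E_t$. Since the $\mathfrak{a}_0$-component $\exp(w)$ of the central element $\exp(v_1)$ commutes with everything and $w\in\mathfrak{a}_0\subset\mathfrak{a}_+$, it simply adds to the $KAK$-datum and to the hyperbolic part of the Jordan decomposition; hence for $v\in\mathfrak{z}$ and $g'\in G'$ we have $\mu(\exp(v)g')=\mathrm{pr}_{\mathfrak{a}_0}(v)+\mu(g')$ and $\lambda(\exp(v)g')=\mathrm{pr}_{\mathfrak{a}_0}(v)+\lambda(g')$. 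Consequently $\lambda(\varphi(g))=m(g)w+\lambda(\varphi_0(g))$ and $\mu(\varphi(g))=m(g)w+\mu(\varphi_0(g))$ for all $g$, and in particular $\lambda(\varphi(g_i))=m(g_i)w+\lambda(\varphi_0(g_i))$ for each letter $g_i\in E_t$.

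With these identities the four assertions follow quickly. For (i): if $\varphi(g)=1$ then $\varphi_0(g)=\exp(-m(g)v_1)\in\mathbf{Z}\cap\mathbf{G}'$, a finite central subgroup of $\mathbf{G}'$; since $\varphi_0(F_t)\cong F_t$ has trivial center ($t\geq2$), this forces $\varphi_0(g)=1$, hence $g=1$. For (ii): for all $a,b\in F_t$ the central factors cancel, so $[\varphi(a),\varphi(b)]=[\varphi_0(a),\varphi_0(b)]$; thus $\overline{\varphi(F_t)}^{\Zariski}$ contains the Zariski closure of $[\varphi_0(F_t),\varphi_0(F_t)]$, and since $\varphi_0(F_t)$ is Zariski-dense in $\mathbf{G}'$ this closure equals $[\mathbf{G}',\mathbf{G}']=\mathbf{G}'$ (the last equality as $\mathbf{G}'$ is semisimple); then $\overline{\varphi(F_t)}^{\Zariski}$ also contains $\varphi(\gamma_1)\varphi_0(\gamma_1)^{-1}=\exp(v_1)$, hence $\overline{\langle\exp(v_1)\rangle}^{\Zariski}=\mathbf{Z}$, hence $\mathbf{Z}\cdot\mathbf{G}'=\mathbf{G}$, i.e.\ $\varphi(F_t)$ is Zariski-dense in $G$. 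For (iii) and (iv): set $K:=\bigcup_{g\in E_t}\bigl(\lambda(\varphi_0(g))+U'\bigr)$, a compact subset of the open set $\Omega$ (by (iii) for $\varphi_0$ and $\Omega'\subset\Omega$); fix $\rho_0>0$ with $K+\overline{B_{\mathfrak{a}}(0,\rho_0)}\subset\Omega$, require $\|v_1\|<\rho_0/2$ from the start, and put $U:=\{x_0+x':x_0\in\mathfrak{a}_0,\ \|x_0\|\leq\rho_0/2,\ x'\in U'\}$, a compact neighborhood of $0$ in $\mathfrak{a}$ containing $U'$. Then for $g\in E_t$, since $\|m(g)w\|\leq\|v_1\|<\rho_0/2$, we get $\lambda(\varphi(g))+U\subset\lambda(\varphi_0(g))+U'+\overline{B_{\mathfrak{a}}(0,\rho_0)}\subset\Omega$, which is (iii); and for a reduced word $g=g_l\cdots g_1$, applying (iv) for $\varphi_0$ and summing $m(g_i)w+\lambda(\varphi_0(g_i))=\lambda(\varphi(g_i))$ over the letters gives $\mu(\varphi(g))=m(g)w+\mu(\varphi_0(g))\in\sum_{i=1}^l\bigl(m(g_i)w+\lambda(\varphi_0(g_i))+U'\bigr)\subset\sum_{i=1}^l\bigl(\lambda(\varphi(g_i))+U\bigr)$, which is (iv).

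The reduction is routine once set up, and the only genuinely nontrivial ingredient is the semisimple case, i.e.\ Benoist's Schottky construction in \cite[Section~7]{Benoist96}, which I take as given; notably we do not need $U'$ to be small there, only $v_1$. Within the added argument the one point requiring care is the bookkeeping that $\mu$ and $\lambda$ split along $\mathfrak{a}=\mathfrak{a}_0\oplus\mathfrak{a}'$, so that the central perturbation shifts every Cartan and Lyapunov projection by the single vector $m(\cdot)w\in\mathfrak{a}_0$; after that, the open conditions (iii) and (iv) persist precisely because this shift is kept small while $U$ is enlarged only in the central directions $\mathfrak{a}_0$.
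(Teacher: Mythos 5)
Your proof is correct and follows essentially the same route as the paper's: invoke Benoist's semisimple case for the derived part, then twist by a small, generically chosen central element whose powers are Zariski-dense in the central torus, enlarging $U$ only in the central directions of $\mathfrak{a}$ so that properties (iii) and (iv) survive the shift of $\mu$ and $\lambda$. The differences (twisting only $\gamma_1$ rather than every generator, and deducing Zariski-density via commutators together with $\exp(v_1)$ instead of via the two quotient homomorphisms) are minor variations in bookkeeping, not a different method.
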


\begin{proof}
    For semisimple $G$, we see immediately that the claim holds 
    by combining the argument in the proof of \cite[Corollaire 7.3]{Benoist96} with \cite[Lemme 7.2]{Benoist96}.

    For reductive $G$ with non-discrete center, let $\mathbf{G}_{ss}$ be the semisimple part 
    of $\mathbf{G}$ and $\mathbf{Z}(\mathbf{G})_{0}$ the identity component of the center of $\mathbf{G}$.
    We denote by $\mathfrak{g}_{ss}$ and $\mathfrak{z}(\mathfrak{g})$ the corresponding real Lie algebras.
    Applying our claim to the semisimple Lie group $G_{ss}:=G\cap \mathbf{G}_{ss}(\C)$ and to
    the open convex cone $\Omega\cap \mathfrak{g}_{ss}$ in $\mathfrak{a}_+\cap \mathfrak{g}_{ss}$, 
    we get a compact neighborhood $U_{ss}$ of $0$ in $\mathfrak{a}\cap \mathfrak{g}_{ss}$ and a group 
    homomorphism $\varphi_{ss}\colon F_{t} \rightarrow G_{ss}$ for some $t\geq 2$ satisfying the following properties:
    \begin{enumerate}
    \item [(i)$_{ss}$]
    $\varphi_{ss}$ is injective.
    \item[(ii)$_{ss}$]
    $\varphi_{ss}(F_{t})$ is Zariski-dense in $G_{ss}$.
    \item[(iii)$_{ss}$]
    For any $g\in E_{t}$ we have $\lambda(\varphi_{ss}(g))+U_{ss} \subset \Omega$.
    \item[(iv)$_{ss}$]
    For any reduced word $g=g_{l}\cdots g_{1}$ ($g_{1},\ldots,g_{l}\in E$), we have
    \[
    \mu(\varphi_{ss}(g)) \in \sum_{i=1}^{l}(\lambda(\varphi_{ss}(g_{i}))+U_{ss}).
    \]
    \end{enumerate}
    
    Take a compact neighborhood $U_{z}$ of $0$ in $\mathfrak{a} \cap \mathfrak{z}(\mathfrak{g})$
    such that $\lambda(\varphi_{ss}(g)) + U_{ss} + U_{z} \subset \Omega$ for any $g\in E$.
    Notice that $U:=U_{ss} + U_{z}$ is a compact neighborhood  of $0$ in $\mathfrak{a}$. 
    We choose a non-torsion central element $a\in G$ from the nonempty open subset $\lambda^{-1}(U_{z})\cap Z(G)$ of the center $Z(G)$ that generates a Zariski-dense subgroup of $\mathbf{Z}(\mathbf{G})_{0}$.
    Let us define a group homomorphism $\varphi\colon F_{t}\rightarrow G$ by $\varphi(\gamma_{i}):=a\varphi_{ss}(\gamma_{i})$ for each $i=1,\ldots,t$. 
    
    Now we prove that the compact neighborhood $U$ and the group homomorphism $\varphi$ satisfy the properties (i)--(iv).   
    By $G_{ss}\cap a^{\Z} =\{1\}$ and the property (i)$_{ss}$, we see that $\varphi$ is injective, which is the property (i). 
    It follows from the choice of $a\in G$ and the property (ii)$_{ss}$ that
    the images of $\varphi(F_{t})$ under the two quotient homomorphisms of $\mathbf{G}$ by 
    $\mathbf{G}_{ss}$ and $\mathbf{Z}(\mathbf{G})_{0}$ are both Zariski-dense.
    Hence $\varphi(F_{t})$ is also Zariski-dense in $G$, which is the property (ii).
    The property (iii) follows immediately from the property (iii)$_{ss}$ and the choice of $U$, $a$.
    We also have the property (iv) obviously from the property (iv)$_{ss}$. Thus the claim is also 
    proved  in our setting.
\end{proof}

\begin{proof}[of Fact \ref{fact:Benoist_Cartan_cone} for reductive cases]
Applying Lemma \ref{lemma:Benoist-Schottky}, we take a compact neighborhood $U$ and 
a group homomorphism $\varphi\colon F_{t}\rightarrow G$ satisfying the properties (i)--(iv). 
It follows immediately from the properties (i)--(iv) 
that $\Gamma:=\varphi(F_{t})$ has the desired properties except that it is discrete in $G$.
Furthermore, we see from the properties (iii) and (iv) that $\mu(\Gamma)$ is discrete.
Hence $\Gamma$ is also discrete since the Cartan projection $\mu$ is a proper map.
This completes the proof of Fact \ref{fact:Benoist_Cartan_cone}.
\end{proof}

\section*{Acknowledgements.}
Our studies on Clifford--Klein forms have been influenced strongly by Professor Toshiyuki Kobayashi.
In particular, the first and second authors obtained their PhD degrees under the supervision of Professor Toshiyuki Kobayashi, and learned many things about discontinuous groups and representation theory.
Without his guidance, our studies would not have been possible.
The authors also would like to thank Fanny Kassel, Yoshiki Oshima, and Yosuke Morita for many helpful comments.
The first and third authors are supported by the Special Postdoctoral
Researcher Program at RIKEN.
The second author is supported by JSPS Grants-in-Aid for Scientific Research JP20K03589, JP20K14310, and JP22H0112.

\end{document}